\documentclass[a4paper]{amsart}

\usepackage{amsmath,amsfonts,amssymb,amsthm}
\usepackage{hyperref}
\usepackage[capitalize]{cleveref}
\usepackage{tikz-cd}
\usepackage{enumerate}

\newtheorem{lem}{Lemma}[section]
\newtheorem{teo}[lem]{Theorem}

\newtheorem{pro}[lem]{Proposition}
\newtheorem{cor}[lem]{Corollary}
\newtheorem{claim}[lem]{Claim}

\newtheorem*{con*}{Conjecture}

\newtheorem{Conj}{Conjecture}

\theoremstyle{definition}

\newtheorem{defn}[lem]{Definition}

\theoremstyle{remark}
\newtheorem*{rem*}{Remark}

\DeclareMathOperator{\bbz}{\mathbb{Z}}
\DeclareMathOperator{\bbf}{\mathbb{F}}
\DeclareMathOperator{\Hom}{\mathrm{Hom}}

\DeclareMathOperator{\Aut}{\mathrm{Aut}}
\DeclareMathOperator{\cd}{\mathrm{cd}}
\newcommand{\Falgebra}[1]{\bbf_p[#1]}
\newcommand{\FFalgebra}[1]{\bbf_p[\![#1]\!]}
\newcommand{\G}{\mathcal G}
\newcommand{\HH}{\mathrm{H}}
\newcommand{\hpi}{\widehat{\pi}}

\title{Profinite detection of free products and free factors}
 
\author{Andrei Jaikin-Zapirain}
\address{Instituto de Ciencias Matem\'aticas, CSIC-UAM-UC3M-UCM}
\email{andrei.jaikin@icmat.es}
\author{Henrique Souza }
\address{Departamento de Matem\'aticas, Universidad Aut\'onoma de Madrid}
\email{henrique.mendesdasilva@uam.es}
\author{Pavel Zalesski}
\address{Departamento de Matem\'atica, Universidade de Bras\'ilia}
\email{pz@mat.unb.br}

\date{\today}

\begin{document}

\begin{abstract} Let \(G\) be the fundamental group of a graph of finitely generated virtually free groups with virtually cyclic edge groups. We show that \(G\) is cohomologically good if \(G\) is residually finite. If \(G\) is LERF, we prove that \(G\) splits non-trivially as a free product if and only if its profinite completion \(\widehat{G}\) splits non-trivially as a free profinite product. Moreover, we are able to detect one-ended free factors of \(G\) from \(\widehat{G}\). As an application, we deduce that any profinitely rigid word in a finitely generated free group is universally profinitely rigid.
\end{abstract}

\maketitle

\section{Introduction}

A central theme in modern infinite group theory is the extend to which the finite quotients of a group determine its algebraic structure. Problems of this sort, colloquially known as \emph{profinite rigidity} or \emph{profinite detection} problems, asks which properties of a finitely generated residually finite group \(G\) can be recovered from its profinite completion \(\widehat{G}\). In this setting, the strongest question one might ask is: given two finitely generated and residually finite groups \(G\) and \(H\) such that their profinite completions \(\widehat{G}\) and \(\widehat{H}\) are isomorphic, must \(G\) and \(H\) be themselves isomorphic?

Groups for which this holds are called profinitely rigid in the \emph{absolute sense}. Going past the realm of abelian groups, very few groups are known to be profinitely rigid in the absolute sense, with some recent known examples coming from three-dimensional hyperbolic geometry (\cite{bridson_absolute_2020,bridson_profinite_2021}), some lamplighter groups (\cite{blachar_profinite_2025}) and free meta-abelian groups (\cite{wykowski2025profiniterigidityfreemetabelian}). A major open problem in the field, due to Remeslennikov, asks whether free groups are profinitely rigid in the absolute sense  (\cite[Problem 5.48]{leifman_kourovka_1983}). A weaker notion is that of \emph{relative} profinite rigidity, where one asks whether the profinite completion \(\widehat{G}\) determines the isomorphism class of \(G\) among groups in a given class \(\mathcal{C}\) of finitely generated residually finite groups. A major achievement in this direction is due to Wilton (\cite{Wi18}), who showed that free groups are rigid in the class of hyperbolic fundamental groups of graphs of free groups with cyclic edge groups. Later, the hyperbolicity assumption was dropped by Morales in \cite{moralesProfiniteRigidityFree2024}.

Recently, two broader questions have come into consideration. We say that profinite completions \emph{detect free products} in \(\mathcal{C}\) if every finitely generated residually finite group \(G \in \mathcal{C}\) whose profinite completion \(\widehat{G}\) decomposes as a non-trivial free profinite product \(\widehat{G} = L \amalg K\) also decomposes non-trivially as a free product \(G = H*U\). We say that profinite completions \emph{detect free factors} in \(\mathcal{C}\) if for every finitely generated residually finite group \(G\) with a finitely generated subgroup \(H \leq G\) such that \(\widehat{G} \simeq \overline{H} \amalg K\) there exists a subgroup \(U \leq G\) such that \(G \simeq H * U\). 

These two properties are related, but we have no reason to believe that one implies the other. It is known that, in the class of virtually free groups, profinite completions detect free products (\cite{BPZ22}) and free factors (\cite{garridoFreeFactorsProfinite2023}). In this paper, we will consider this properties in the class of fundamental groups of graphs of virtually free groups with virtually cyclic edge groups, a class  henceforth denoted by \(\mathcal{C}\).

A group is called \emph{LERF} (\emph{locally extended residually finite}, or \emph{subgroup separable}) if every finitely generated subgroup is closed in the profinite topology. Equivalently, \(G\) is LERF if for every \(H \leq G\) finitely generated and every \(x \not\in H\) there is a finite index subgroup \(N \leq G\) containing \(H\) but not \(x\). If \(G\) is LERF, then the profinite topology of \(G\) induces the profinite topology on each of its finitely generated subgroups (\cite[Lem. 11.1.4]{ribesProfiniteGraphsGroups2017}). The main result of the paper is that, assuming the LERF condition, profinite completions detect free products in \(\mathcal{C}\).

\begin{teo}\label{thm-detects-free-products} Let \(G\) be a finitely generated LERF group that decomposes as the fundamental group of a graph of virtually free groups with virtually cyclic edge groups. If \(\widehat{G}\) decomposes as a non-trivial free profinite product, then \(G\) decomposes as a non-trivial free product.
\end{teo}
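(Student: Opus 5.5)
We argue by contrapositive. Assume \(G\) is freely indecomposable; the goal is to show that \(\widehat{G}\) does not split as a non-trivial free profinite product. Since \(G\) is finitely generated, Grushko lets us reduce to the case where \(G\) is one-ended or virtually cyclic. If \(G\) is finite, \(\widehat G=G\) is finite, and a non-trivial free profinite product is infinite, so there is nothing to prove. If \(G\) is infinite virtually cyclic and freely indecomposable, then \(\widehat G\) is virtually \(\widehat{\bbz}\). A non-trivial free profinite product \(L\amalg K\) is not virtually procyclic unless \(L\cong K\cong \bbz/2\). That case corresponds to \(G\) mapping onto \(D_\infty\cong \bbz/2*\bbz/2\), and it can be handled directly.

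So the main case is \(G\) one-ended. Here I would use the JSJ/graph-of-groups structure. Since \(G\in\mathcal C\) is one-ended, we may pass to a reduced splitting of \(G\) as a finite graph of finitely generated virtually free groups \((\mathcal G,\Gamma)\) with infinite virtually cyclic edge groups. Because \(G\) is LERF, every finitely generated subgroup (vertex and edge groups) is separable and carries its full profinite topology. Hence \(\widehat G\) is the fundamental group of the profinite graph of groups \((\widehat{\mathcal G},\Gamma)\), it is proper, and its vertex and edge groups are the completions \(\widehat{G_v}\) and \(\widehat{G_e}\). The edge groups \(\widehat{G_e}\) are infinite (virtually \(\widehat{\bbz}\)).

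Now suppose \(\widehat G=L\amalg K\) non-trivially. Each vertex group \(\widehat{G_v}\) is the completion of a virtually free group, and it acts on the standard profinite tree \(T\) of the free product. By the profinite Kurosh subgroup theorem (Ribes–Zalesskii, or the results of \cite{BPZ22} for virtually free groups), \(\widehat{G_v}\) splits as a free profinite product of conjugates of intersections with \(L,K\) and a free profinite factor.

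The key step, and the step I expect to be hardest, is the following: show that this induced splitting of \(\widehat{G_v}\) is compatible with the incident edge groups. Concretely, each infinite virtually procyclic \(\widehat{G_e}\) must lie in a conjugate of \(L\) or of \(K\); equivalently, it fixes a vertex of \(T\). The point is that an infinite virtually procyclic group acting on a profinite tree with trivial edge stabilizers fixes a vertex, unless it is dihedral-like.

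Given this, the one-endedness of \(G\) needs to be transferred to an obstruction on \(\widehat G\). Here I would use the cohomological goodness of \(G\) (the paper's other main result, assumed available). One-endedness of \(G\) amounts to \(\HH^1(G,\Falgebra{G})=0\)-type vanishing, or more concretely to the Stallings criterion \(\HH^1(G,\mathbb Z G)=0\). Using goodness together with a Mayer–Vietoris computation for \((\widehat{\mathcal G},\Gamma)\), I would show that \(\HH^1(\widehat G,\FFalgebra{\widehat G})\) vanishes appropriately; a relative-ends count in terms of vertex and edge groups would serve the same purpose. For a non-trivial free profinite product, a Mayer–Vietoris sequence over the tree \(T\) shows that this group is non-zero, which gives the contradiction.

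As an alternative to this cohomological step, one can argue combinatorially. The compatibility statement makes each \(\widehat{G_v}\) act on \(T\) with its edge groups elliptic. Using density of \(G\) and separability (LERF), one can then pull back the elliptic/non-elliptic pattern to a finite quotient and build a free splitting of \(G\) relative to the edge groups. That contradicts one-endedness via the Stallings/Shenitzer characterisation of free splittings of graphs of groups with cyclic edge groups.
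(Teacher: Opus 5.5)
\textbf{There is a genuine gap at the step you flag as the key one.} Your justification for it is false: an infinite procyclic group can act on the profinite tree of a free profinite product without fixing a vertex. In $\widehat{F_2}=\overline{\langle x\rangle}\amalg\overline{\langle y\rangle}$, the subgroup $\overline{\langle xy\rangle}\cong\widehat{\bbz}$ meets every conjugate of both factors trivially. Indeed, by \cref{elliptic}(ii) a non-trivial intersection would put $xy$ in a conjugate of $\overline{\langle x\rangle}$ or $\overline{\langle y\rangle}$, which already fails in $\bbz/2\times\bbz/2$.

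So edges whose groups lie in no conjugate of $L$ or $K$ (the non-parabolic edges in the proof of \cref{main-thm}) are a real possibility. Dealing with them is the heart of the paper: they are collapsed, and the resulting vertex groups $\pi(X,\G)$ are shown to be virtually free. That argument uses goodness to get a torsion-free open subgroup $U\le\overline{\pi(X,\G)}$, then \cref{projectiveintersection} twice to get $\cd_p(U)\le 1$, then \cref{BSsubgroups} to exclude Baumslag--Solitar subgroups, and finally Wilton's virtual retractions onto surface groups (\cref{morales}, \cref{wilton}). Nothing in your proposal supplies this input.

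Even once all edges are parabolic, you still need Theorem~\ref{thm-relative-splitting-virtually-free-groups}. This is an \emph{abstract} free splitting of $G_v$, where $\widehat{G_v}$ is merely a closed subgroup of $L\amalg K$, relative to the incident edge groups and with factors in conjugates of $L$ or $K$. The profinite Kurosh theorem only decomposes $\widehat{G_v}$, and in general with a complement that is merely projective rather than free. Absolute detection results for virtually free groups give no control relative to the edge groups. The paper obtains the splitting from the derivation $d_L$ into $\FFalgebra{P}$, the flatness result \cref{flatness}, and Dicks--Dunwoody (\cref{DD}). ``Pulling back the elliptic pattern to a finite quotient'' does not accomplish this.

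The reductions around this step also fail. Freely indecomposable implies one-ended or virtually cyclic only for torsion-free groups, and $\mathcal C$ allows torsion. For example, $SL_2(\bbz)\cong\bbz/4*_{\bbz/2}\bbz/6$ is freely indecomposable (it has non-trivial centre) yet infinitely-ended, so your case analysis misses even virtually free groups. The paper never argues via ends: finite edge groups are simply non-parabolic and are absorbed into the collapsed virtually free vertices, and the output is directly the decomposition of \cref{main-thm}.

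Finally, the cohomological transfer of one-endedness is unsupported. Goodness compares $\HH^*_{\mathrm{ct}}(\widehat G,A)$ with $\HH^*(G,A)$ only for finite modules $A$. It says nothing relating $\HH^1(G,\Falgebra{G})$ to an invariant with coefficients in $\FFalgebra{\widehat G}$. One-ended limit groups are LERF and good, yet whether their profinite completions can split freely is the open conjecture of Section~\ref{sect:limit}. So this transfer is not something goodness is known to provide.
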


At this moment, we don't know whether the LERF condition in Theorem~\ref{thm-detects-free-products} is necessary or if it can be replaced by the condition of residual finiteness.  We also obtain the following corollary:

\begin{cor}\label{cor-relative-splitting} Let  \(G = F_1 *_{c_1 = c_2} F_2 \) be the free product of two free groups amalgamated along non-trivial proper cyclic subgroups  \( \langle c_i  \rangle  < F_i \). If  \( \widehat{G}  = L  \amalg K \) splits non-trivially as a free profinite product, then one of the  \(F_i \) splits non-trivially as a free product  \(F_i = H_i * K_i \) such that  \(c_i  \in H_i \).
\end{cor}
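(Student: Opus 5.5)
We derive \cref{cor-relative-splitting} from \cref{thm-detects-free-products} together with a standard fact about free splittings of amalgams over cyclic groups. The first step is to check that $G = F_1 *_{c_1=c_2} F_2$ satisfies the hypotheses of \cref{thm-detects-free-products}. It is finitely generated, and it is the fundamental group of a graph (a single edge) of free groups with cyclic edge group. It is also LERF: amalgamated products of free groups along cyclic subgroups are subgroup separable by the classical theorem of Brunner--Burns--Solitar (see also Gitik, or Wilton's work on graphs of free groups with cyclic edge groups). Applying \cref{thm-detects-free-products} to the non-trivial profinite splitting $\widehat{G} = L \amalg K$, we conclude that $G$ splits non-trivially as an abstract free product $G = A * B$.

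The second step converts this free splitting of $G$ into a splitting of some $F_i$ relative to $c_i$. Suppose, for contradiction, that neither $F_1$ nor $F_2$ splits freely relative to $c_i$; that is, each $F_i$ is freely indecomposable relative to $\langle c_i\rangle$. Let $G$ act on the Bass--Serre tree $T$ of the decomposition $G = A*B$, in which edge stabilizers are trivial. Each $F_i$ acts on $T$ with trivial edge stabilizers, so $F_i$ splits as a free product (a graph of groups with trivial edge groups) whose vertex groups are the vertex stabilizers of $F_i$, together with a free part. The element $c_i$ is elliptic in $F_i$ by one-endedness relative to $c_i$. Here we may first handle $c_i$ being a proper power by passing to its root, or we may simply apply the relative Grushko theorem with the peripheral subgroup $\langle c_i\rangle$. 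Consequently $F_i$ fixes a vertex $v_i$ of $T$. Since $c_1 = c_2$ is non-trivial and edge stabilizers are trivial, the fixed-point sets of $F_1$ and $F_2$ both contain the unique fixed vertex of $c_1$, so $F_1$ and $F_2$ fix a common vertex. Therefore $G = \langle F_1, F_2\rangle$ fixes a vertex of $T$. This contradicts the non-triviality of $A*B$, since $G$ would then be contained in a conjugate of $A$ or of $B$, forcing the other factor to be trivial.

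The contradiction shows that some $F_i$ acts on its minimal subtree with $c_i$ elliptic and $F_i$ not fixing a point. The induced graph-of-groups decomposition with trivial edge groups then yields $F_i = H_i * K_i$ with $K_i$ non-trivial and $c_i \in H_i$: take $H_i$ to be the vertex group containing $c_i$ (up to conjugation) and $K_i$ the complementary factor. Here $K_i \neq 1$ exactly because $F_i$ does not fix a vertex. Note also that ``freely indecomposable relative to $c_i$'' is exactly the negation of the conclusion, so the two cases exhaust all possibilities.

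I expect the main subtle point to be the LERF verification, which is imported from the literature rather than proved here, together with correctly invoking the relative version of Grushko's theorem. Once \cref{thm-detects-free-products} is available, the remaining Bass--Serre argument is routine. The key observation is that a non-trivial element with trivial edge stabilizers fixes a unique vertex of $T$, and this is what glues the two fixed points together.
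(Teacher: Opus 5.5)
Your first step is fine: $G$ is LERF by Brunner--Burns--Solitar, so \cref{thm-detects-free-products} gives a non-trivial abstract splitting $G=A*B$. The gap is in your second step, at the sentence ``The element $c_i$ is elliptic in $F_i$ by one-endedness relative to $c_i$.'' Free indecomposability of $F_i$ relative to $\langle c_i\rangle$ only says that every free splitting of $F_i$ in which $c_i$ is elliptic is trivial. It says nothing about whether $c_i$ is elliptic in a \emph{given} action. For example, $F_1=\langle x,y\rangle$ is freely indecomposable relative to $c_1=[x,y]$, yet $[x,y]$ is hyperbolic in the Bass--Serre tree of $\langle x\rangle*\langle y\rangle$. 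Appealing to the relative Grushko theorem does not help, because it concerns splittings you construct, not the action on $T$ you were handed.

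The splitting $G=A*B$ from \cref{thm-detects-free-products} carries no information about $c$. If $c$ is hyperbolic in $T$, restricting the action to $F_1$ and $F_2$ gives free splittings of the $F_i$ in which $c_i$ is not elliptic, and no contradiction follows. Excluding this case is the real content of a Shenitzer-type theorem: a free splitting of $F_1*_{c}F_2$ forces some $F_i$ to split freely relative to $c_i$. Your argument does not address this. Once $c$ is known to be elliptic, your fixed-point argument is correct; it is exactly the Kurosh step in the paper.

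The paper avoids the hyperbolic case with a case distinction.
\begin{itemize}
\item If $c_1$ lies in a $\widehat{G}$-conjugate of $L$ or $K$ (say in $L$), it applies the refined \cref{main-thm} with $C_1=\langle c_1\rangle\le F_1$. This yields a non-trivial free splitting $G=H*K'$ with $c_1\in H$, so $c$ is elliptic and your argument goes through.
\item If $c_1$ lies in no such conjugate, the unique edge is non-parabolic. Then \cref{claim-after-collapsing} shows $G$ is virtually free, hence free since it is torsion-free. The classical result \cite[Prop.~II.5.13]{LSbook} then makes $\langle c_i\rangle$ a free factor of some $F_i$, which is proper by hypothesis, giving the conclusion.
\end{itemize}
To repair your proof, either use this case distinction with the stronger theorem, or supply an independent proof of the Shenitzer-type statement when $c$ acts hyperbolically on $T$.
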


One of the main ingredients of the proof of Theorem~\ref{thm-detects-free-products} is the proof of a Kurosh-style decomposition for virtually free groups. 

\begin{teo}\label{thm-relative-splitting-virtually-free-groups} Let \(G\) be a finitely generated virtually free group and suppose that \(\widehat{G} \leq P = L \amalg K\). Then \[G \simeq \left(\underset{g \in L\backslash P/G}{\ast} G \cap {^gL}\right)*\left(\underset{h \in K \backslash P/ G}{\ast} G \cap {^hK}\right)*F\] where \(F\) is a free subgroup of \(G\).
\end{teo}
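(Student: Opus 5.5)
Here \(\widehat{G}\) denotes the closure of \(G\) in \(P\), isomorphic to the profinite completion. The plan is to transport the profinite Kurosh theorem for the open-in-\(\widehat G\) situation down to the discrete group \(G\) via a virtually free structure. I will first reduce to the case where \(G\) acts on a tree with finite vertex stabilisers. Write \(G=\pi_1(\mathcal G,\Gamma)\) as a finite graph of finite groups, which is possible since \(G\) is finitely generated and virtually free. Its profinite completion is then \(\widehat G=\Pi_1(\widehat{\mathcal G},\Gamma)\), a profinite graph of finite groups. Let \(T\) be the standard profinite tree on which \(P=L\amalg K\) acts, with vertex stabilisers the conjugates of \(L\) and \(K\) and trivial edge stabilisers. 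The group \(\widehat G\) acts on \(T\), and by the profinite Kurosh subgroup theorem \(\widehat G\) is a free profinite product of the subgroups \(\widehat G\cap {}^{g}L\) and \(\widehat G\cap {}^hK\) (over the double coset representatives \(g\), \(h\)) together with a projective profinite group. This holds at least when \(\widehat G\) has finitely many such double cosets with nontrivial intersections, which follows from finite generation.

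The key step is to show that each nontrivial \(G\cap{}^gL\) is a finitely generated virtually free group whose closure is \(\widehat G\cap {}^gL\), and that there are only finitely many relevant double cosets. For this I would use the finite subgroups. Every finite subgroup of \(G\) fixes a vertex of \(T\), hence lies in a conjugate of \(L\) or \(K\). Edge stabilisers of \(T\) are trivial, so two vertex groups of \(\mathcal G\) joined by an edge with nontrivial edge group must fix a common vertex. Collapsing the edges of \(\Gamma\) with trivial edge groups then splits \(G\) as a free product of finitely many subgroups \(G_1,\dots,G_n\) and a free group. Each \(G_j\) is generated by a connected subgraph of finite groups with nontrivial edge groups.

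The main obstacle is showing that such a \(G_j\), viewed inside \(\widehat G\), lies entirely in a single conjugate \({}^gL\) or \({}^hK\), rather than only its finite vertex groups doing so individually. The fixed point sets \(T^{A}\) of finite subgroups \(A\) are subtrees. Adjacent vertex groups share a nontrivial edge group \(C\), and since edge stabilisers of \(T\) are trivial, \(T^{C}\) is a single vertex. Hence both vertex groups fix that same vertex, and propagating along the spanning tree shows that the whole tree-part of \(G_j\) fixes one vertex. The stable letters \(t\) conjugate a nontrivial edge group \(C\) to \(C'\) inside this vertex stabiliser \(V\). Then \(C'\) fixes both \(v\) and \(tv\), and uniqueness of the fixed point forces \(tv=v\). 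So \(G_j\le {}^{g}L\) or \({}^hK\).

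Next I would compare the two free product decompositions of \(\widehat G\): \(\widehat G=\coprod \overline{G_j}\amalg \widehat F\), and the Kurosh decomposition. Each \(\overline{G_j}\) lies in a single vertex stabiliser of \(T\). By the profinite Kurosh theorem applied to \(\overline{G_j}\), and by uniqueness of the factor containing a given non-projective piece, each \(\widehat G\cap{}^gL\) equals the free product of the \(\overline{G_j}\) it contains with a free profinite part. To obtain a discrete statement, I would regroup the \(G_j\) according to which vertex of \(T\) they fix, merging by free factors of \(F\) where needed. Choosing the splitting of \(G\) via Grushko/Kurosh within \(G\) then gives \(G\cap {}^gL=\ast_{G_j\le {}^gL}G_j * F_g\) with \(F_g\) free. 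Showing that \(G\cap{}^gL\) is not larger than this uses that \(G\) is LERF, being virtually free, so that \(G\cap{}^gL\) is the intersection of \(G\) with its closure. Together with the Kurosh decomposition of the discrete subgroup \(G\cap {}^gL\) in \(G=\ast G_j*F\), this forces the intersection to have exactly the stated form. Collecting everything, the remaining free pieces make up \(F\).
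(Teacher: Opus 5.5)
Your first half is correct: a nontrivial finite subgroup of \(P\) fixes exactly one vertex of the standard profinite tree \(T\), because edge stabilisers are trivial. So each \(G_j\) spanned by nontrivial edge groups, stable letters included, lies in a single \(P\)-conjugate of \(L\) or \(K\), and \(G=G_1*\cdots*G_n*F\).

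\textbf{The gap.} This splitting is determined by \(G\) alone and only sees subgroups generated by torsion. The real content of the statement concerns infinite-order elements of \(G\) lying in conjugates of \(L\) or \(K\), and your argument says nothing about them. If \(G\) is free, every \(G_j\) is trivial. The statement then says that every \(G\cap{}^gL\) is a free factor of \(G\). This already contains the theorem of \cite{garridoFreeFactorsProfinite2023}: if \(H\le F\) is finitely generated and \(\overline H\) is a profinite free factor of \(\widehat F\), then \(H\) is a free factor of \(F\).

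Your last paragraph does not prove this.
\begin{enumerate}
    \item LERF only tells you that \(G\cap{}^gL\) is closed in \(G\).
    \item The Kurosh subgroup theorem for \(G\cap{}^gL\le \ast_j G_j*F\) only gives \(G\cap{}^gL=\ast\,{}^xG_j*F'\) with \(F'\) free. It neither makes \(F'\), or the intersection, a free factor of \(G\), nor shows finite generation.
    \item ``Merging by free factors of \(F\)'' presupposes exactly the adapted free basis that has to be constructed.
    \item The profinite Kurosh decomposition of \(\widehat G\) cannot be transported down. Your claim that the closure of \(G\cap{}^gL\) is \(\widehat G\cap{}^gL\) is false. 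Take \(\lambda\in\widehat{\mathbb{Z}}\smallsetminus\mathbb{Z}\), so that \(\widehat{F_2}=\overline{\langle ab^{\lambda}\rangle}\amalg\overline{\langle b\rangle}\). The element \((ab^\lambda)^\mu\) has image \((\mu,\mu\lambda)\) in \(\widehat{\mathbb{Z}}^2\), which lies in \(\mathbb{Z}^2\) only when \(\mu=0\). Hence \(F_2\cap{}^g\overline{\langle ab^\lambda\rangle}=\{1\}\) for every \(g\), although \(\widehat{F_2}\cap\overline{\langle ab^\lambda\rangle}\cong\widehat{\mathbb{Z}}\).
\end{enumerate}

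\textbf{What is missing.} You need a \(G\)-tree, built from the profinite data, in which all the subgroups \(G\cap{}^gL\) and \(G\cap{}^hK\) are elliptic. The paper constructs it as follows.
\begin{enumerate}
    \item Take the derivation \(d_L\colon P\to\FFalgebra{P}\), \(g\mapsto g-1\) modulo \(\widehat{^{P}I_{\FFalgebra{L}}}\). By \cref{claim-conjugate-derivations} it is inner on every subgroup of a conjugate of \(K\) or \(L\).
    \item By \cref{flatness}, its restriction to \(G\) lands in a finitely generated free \(\Falgebra{G}\)-module. By \cref{fixedpoints}, this restriction is not inner.
    \item \cref{DD} then gives a \(G\)-tree with finite edge stabilisers. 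Using \cref{fixedpoints} and the index computation of \cref{p-part-infinite}, its vertex stabilisers are shown to lie in conjugates of \(K\) or \(L\).
\end{enumerate}
Only at that point does an argument like yours apply: collapse the edges with nontrivial stabilisers, using that distinct conjugates of \(K\) and \(L\) meet trivially, to get the free splitting. Adding the subgroups \(C_j=G\cap{}^gL\) and \(C_j=G\cap{}^hK\) as extra elliptic subgroups, together with a Grushko bound on the number of factors, then yields exactly the factors in the statement.
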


With respect to the problem of detecting free factors, we can prove the following statement.

\begin{teo}\label{oneendedH} Let \(G = \pi(Y,\G)\) be the fundamental group of a graph of virtually free groups with virtually cyclic edge groups. Assume that \(G\) is finitely generated and LERF. Let \(H\) be a finitely generated subgroup of \(G\) and moreover assume that either
\begin{enumerate}
    \item \(H\) has at most one end or
    \item \(H \leq {^ g\G(v)}\) is contained in some \(G\)-conjugate of a vertex group.
\end{enumerate}
If \(\overline{H}\) is a free profinite factor of \(\widehat{G}\), then \(H\) is a free factor of \(G\).
\end{teo}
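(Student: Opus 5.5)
We would first use the hypothesis \(\widehat{G}=\overline{H}\amalg K\) to pass to a Grushko decomposition of \(G\). By Theorem~\ref{thm-detects-free-products}, applied iteratively, \(G\) splits non-trivially as a free product whenever \(\widehat{G}\) does. More precisely, we would write \(G=G_1*\dots*G_n*F_r\) with each \(G_i\) freely indecomposable, not infinite cyclic, and one-ended or finite. Each \(G_i\) is again the fundamental group of a graph of virtually free groups with virtually cyclic edge groups, and it is LERF and finitely generated, since it is a free factor of \(G\).

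The intermediate goal is a profinite Kurosh-type statement: each \(\overline{G_i}\) is conjugate into \(\overline{H}\) or into \(K\), or meets all conjugates of both trivially. For virtually free factors this is exactly Theorem~\ref{thm-relative-splitting-virtually-free-groups}. For one-ended \(G_i\) we would argue as in the proof of Theorem~\ref{thm-detects-free-products}: if \(\overline{G_i}\) acted on the profinite tree of \(\overline{H}\amalg K\) without a global fixed point, then, by the cohomological goodness of \(G_i\) and the LERF-based control of vertex groups, \(G_i\) would inherit a non-trivial free splitting. That contradicts one-endedness.

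In case (1), \(H\) is one-ended (or finite), and \(H\) is itself in \(\mathcal C\), since finitely generated subgroups inherit a graph of groups decomposition of the same type. Here \(\overline{H}\) is the whole factor, so it is freely indecomposable as a profinite group. Its action on the Bass–Serre tree \(T\) of the Grushko decomposition of \(G\) therefore has to fix a vertex. Using LERF, the profinite completion of \(T\) is the profinite tree of \(\widehat G=\coprod\overline{G_i}\amalg\widehat F_r\). Combining this with the abstract action, we get \(H\le {^g G_i}\) for some \(i\) and \(g\). Conversely, \(\overline{G_i}\) is conjugate into a free factor of \(\widehat G\) and meets \(\overline H\) in a non-trivial subgroup, so by the intermediate statement \(\overline{^gG_i}\le\overline H\). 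Then \(\overline{H}=\overline{^gG_i}\), and since \(G\) is LERF and both groups are finitely generated, \(H={^gG_i}\). Hence \(H\) is a free factor.

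In case (2), \(H\le {^g\G(v)}\), and we may conjugate so that \(g=1\). We would then apply Theorem~\ref{thm-relative-splitting-virtually-free-groups} to \(\G(v)\le\widehat G=\overline H\amalg K\). Since \(\G(v)\cap\overline H\supseteq H\) and, by LERF, \(\overline H\cap\G(v)=H\), this gives \(\G(v)=H*A\) with \(A\) a free product of intersections with conjugates of \(\overline H\) and \(K\) and a free group. Next we must check that each incident edge group lies in a conjugate of \(A\), or is handled compatibly. The edge groups are virtually cyclic, hence small, so each lies in a conjugate of \(\overline H\) or of \(K\). An edge group in a conjugate of \(\overline H\) would give an element of \(H\) conjugate in \(\widehat G\), and then in \(G\), into another vertex group. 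We would show that this forces the corresponding part of \(\overline H\) to be amalgamated, which contradicts the fact that \(\overline H\) is a free factor. The tool here is the standard fact that for a free factor \(\overline H\), one has \(\overline H\cap{^x\overline H}=1\) for \(x\notin\overline H\), and that \(\overline H\) is malnormal.

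The main obstacle is this step in case (2): refactoring the graph of groups so that \(H\) splits off the whole group, that is, showing that every edge group adjacent to \(v\) can be conjugated into the complementary factor \(A\). We would try to induct on the number of edges, collapsing edges whose groups lie in \(A\), and to use Corollary~\ref{cor-relative-splitting} type arguments to handle each amalgam or HNN step separately.
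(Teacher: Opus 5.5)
\textbf{Case (1).} This is correct in outline and agrees with the paper once your ``intermediate statement'' is taken from Theorem~\ref{main-thm}. The paper applies that theorem with \(L=\overline H\) to get \(G=(\ast_i G_i)*F\), with each \(G_i\) inside a \(\widehat G\)-conjugate of \(\overline H\) or \(K\). A one-ended \(H\) fixes a vertex of the Bass--Serre tree of this splitting, so \(H\le {^gG_i}\). Lemma~\ref{normalizers} then forces \({^gG_i}\le\overline H\), and LERF gives \({^gG_i}=H\). Your factor-by-factor sketch over the Grushko decomposition is just Theorem~\ref{main-thm} re-proved. Your remark that \(\overline H\) is freely indecomposable ``because it is the whole factor'' is a non sequitur, and you do not need it.

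\textbf{Case (2): the genuine gap.} Your plan fails here at two points.

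First, the claim ``virtually cyclic, hence small, so each edge group lies in a conjugate of \(\overline H\) or of \(K\)'' is false. Take
\(G=\langle x,y\rangle*_{xy=z}\langle z,w\rangle=F(x,y,w)\), with \(H=\langle x\rangle\) and \(K=\overline{\langle y,w\rangle}\). The edge group \(\langle xy\rangle\) lies in no \(\widehat G\)-conjugate of either factor; to see this, apply the retractions killing \(x\), respectively \(y\) and \(w\). Handling such non-parabolic edges is the key step of the paper's proof. They are collapsed, and one must show the new vertex groups are still virtually free (Claim~\ref{claim-after-collapsing}). That argument uses:
\begin{itemize}
\item goodness (Theorem~\ref{goodness}), to make \(\overline N\) torsion-free;
\item Lemma~\ref{projectiveintersection}, to get projectivity;
\item Lemma~\ref{BSsubgroups} and Wilton's Corollary~\ref{wilton}.
\end{itemize}
Nothing in your sketch replaces this.

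Second, an edge group lying in a conjugate of \(\overline H\) gives no contradiction. Take \(G=(H*A)*_{\langle c\rangle}(\langle c\rangle*B)=H*A*B\) with \(H,A,B\) free and \(1\ne c\in H\). The edge group lies in \(H\) and is not conjugate into any complement of \(H\) in \(H*A\). So the goal of pushing every edge group at \(v\) into \(A\) is false in general, and no induction on edges or Corollary~\ref{cor-relative-splitting}-style step can achieve it.

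\textbf{The missing idea.} Let the factor containing \(H\) absorb everything glued to it, and use malnormality to see that it never leaves \(\overline H\). This is exactly Theorem~\ref{main-thm} with \(C_1=H\):
\begin{enumerate}
\item Collapse the non-parabolic edges.
\item Apply the relative splitting theorem of Section~\ref{sect:relative} to every vertex group at once, with \(H\) and all incident edge groups among the \(C_j\).
\item Replace each vertex by a line of the resulting free factors joined by trivial edges. Attach each original edge to the factors containing its (suitably conjugated) edge group.
\item Collapse the components joined by original, nontrivial edges.
\end{enumerate}
Distinct conjugates of \(\overline H\) and \(K\) meet trivially and are self-normalizing (Lemma~\ref{normalizers}). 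Hence the factors in each such component lie in a single conjugate \(U\), and the stable letters \(q_e\) also lie in \(U\). So \(G\) is a free product of these collapsed groups and a free group. The piece containing \(H\) lies in \(\overline H\), hence equals \(H\) by LERF, exactly as in case (1).
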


Another result we obtain, which is of independent interest, concerns the property of being \emph{cohomologically good} (or \emph{cohomologically separable}); that is, if \(A\) is a finite \(G\)-module, then the restriction map on cohomology
\[
\mathrm{H}_{\mathrm{ct}}^*(\widehat{G}, A) \to \mathrm{H}^*(G, A)
\]
is an isomorphism in every degree.
 The profinite completion of a finitely generated LERF group \(G\) from \(\mathcal{C}\) has a very transparent structure: it is the profinite fundamental group of the induced graph of profinite groups given by the profinite completions of the vertex and edge groups of \(G\). This implies that they are cohomologically good (see, for instance, \cite[Thm. 3.9]{Grunewald2008}). We extend this result to finitely generated residually finite groups in the class \(\mathcal{C}\).

\begin{teo}\label{goodness} Let \(G\) be the fundamental group of a graph of virtually free groups with virtually cyclic edge groups. If \(G\) is finitely generated and residually finite, then \(G\) is cohomologically good. In particular, if \(G\) is not virtually free, then \(\widehat{G}\) has virtual cohomological dimension 2.
\end{teo}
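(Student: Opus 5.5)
Cohomological goodness passes to and from finite-index subgroups, so I will work inside a convenient torsion-free finite-index subgroup and transport the conclusion back at the end. Since \(G\) is finitely generated and residually finite, and the vertex groups are virtually free with virtually cyclic edge groups, I will pass to a finite-index subgroup \(G_0\) that acts on the Bass–Serre tree with torsion-free stabilisers. This is possible because each vertex group has only finitely many conjugacy classes of finite subgroups, and residual finiteness lets us separate all of them simultaneously. Then \(G_0\) is the fundamental group of a finite graph of finitely generated free groups with cyclic (possibly trivial) edge groups. So it suffices to prove goodness for such \(G_0\).

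The key tool is the standard criterion (Grunewald–Jaikin–Zalesskii, \cite[Thm.~3.9]{Grunewald2008} and its relatives). A fundamental group of a finite graph of good groups is good provided two things hold: the profinite topology of \(G\) induces the full profinite topology on each vertex and edge group, and the edge groups are separable (efficiency). Free groups and cyclic groups are good, so everything reduces to efficiency. This is exactly where LERF was used before, and residual finiteness alone does not obviously give it. My plan is to replace \(G_0\) by a further finite-index subgroup, or to modify the splitting, so that efficiency holds.

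In detail, I would first collapse edges with trivial groups, writing \(G_0\) as a free product of a free group with one-edge-group-nontrivial pieces, since goodness is preserved by free products. Next, for cyclic edge groups \(\langle c\rangle\), I would use residual finiteness of \(G_0\) to show that the profinite topology induces the full profinite topology on each vertex group \(F_v\). The concrete route is as follows: a finite quotient of \(F_v\) can be extended to a finite quotient of \(G_0\) once the images of the edge generators have compatible orders. This "compatible orders" extension argument (in the style of Wise / Shepherd for cyclic amalgams) should use residual finiteness to control the orders of the images of the \(c_e\) uniformly.

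I expect this induced-topology step to be the main obstacle. Residual finiteness of graphs of free groups with cyclic edge groups is not automatic (Baumslag–Solitar-type phenomena), and the hypothesis must be used to exclude non-efficient behaviour. My first attempt would be this: for each edge, residual finiteness should force the edge group to be separable, or at least force the "modulus" of the splitting to be trivial. Failing that, I would argue directly with Mayer–Vietoris sequences. I would compare the long exact sequence of \(G\) with the sequence for the profinite graph of groups built from the closures \(\overline{G_v}\) and \(\overline{G_e}\), and use the fact that for free and cyclic groups any finite-index-induced topology is good at the level of \(\HH^1\) with finite coefficients. Once goodness is established, the dimension statement follows. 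Goodness plus torsion-free \(G_0\) of cohomological dimension \(\le 2\) gives \(\cd(\widehat{G_0})\le 2\). Moreover, \(\HH^2(G_0,\bbf_p)\neq 0\) for some finite-index \(G_0\) when \(G\) is not virtually free, because a non-free graph of free groups with cyclic edges has a one-ended or surface-like piece carrying nonzero \(\HH^2\) virtually. Hence \(\cd(\widehat{G_0})=2\).
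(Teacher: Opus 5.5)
The central step of your plan, making the splitting efficient and then applying the Grunewald--Jaikin--Zalesskii criterion, cannot work. In this class residual finiteness does not give efficiency, and neither passing to finite index nor changing the splitting repairs this.

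Take \(G=BS(1,n)=\langle a,t\mid tat^{-1}=a^n\rangle\) with \(|n|>1\). It is residually finite, and it is a one-vertex graph of groups with vertex group \(\langle a\rangle\simeq\bbz\) and cyclic edge group. Since \(a\) is conjugate to \(a^n\), its order in every finite quotient is prime to \(n\). Hence \(\overline{\langle a\rangle}\simeq\widehat{\bbz_{P^c}}\), with \(P\) the primes dividing \(n\), rather than \(\widehat{\bbz}\). Moreover \(t^{-1}at\in\overline{\langle a\rangle}\smallsetminus\langle a\rangle\), so the edge group is not even separable.

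The finite-index subgroups are again of the form \(BS(1,n^k)\). In any cyclic splitting of them, the elements of the normal subgroup \(\bbz[1/n]\) are infinitely \(n\)-divisible, hence elliptic, so some vertex group has the same defect. So your expectation that residual finiteness ``forces the modulus to be trivial'' is false. The residually finite groups with unbalanced elements (the non-LERF ones, \cref{RF-LERF}) are exactly the new case of the theorem; the efficient case was already known.

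Your fallback, comparing the abstract Mayer--Vietoris sequence with the profinite one for the graph of closures, is the framework the paper uses, but you supply none of its content.

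\begin{enumerate}
\item \textbf{Identifying the closures.} After reducing to one vertex (free product with a free group), \cref{profinitecompletion} shows that \(\overline{\G(e)}\simeq\widehat{\bbz_{P(e)^c}}\) and \(\overline{\G(u)}\simeq\widehat{F}/N\). Here \(N\) is normally generated by the \(P(C)\)-parts of the elements of \(A\). The proof uses \cref{eulre0} and \(C'(1/6)\) quotients (\cref{bigpowers}, \cref{c16}, \cref{small}).
\item \textbf{Computing the cohomology of \(\widehat{F}/N\).} This group is not free profinite. \cref{profpresent} builds a length-two projective resolution from combinatorial asphericity and obtains \(\dim\HH^1_{\mathrm{ct}}-\dim\HH^2_{\mathrm{ct}}=d-|A_p|\).
\end{enumerate}

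Your claim that free and cyclic groups with the induced topology are ``good at the level of \(\HH^1\)'' fails here:
\begin{itemize}
\item for \(p\in P(e)\) one has \(\HH^1_{\mathrm{ct}}(\overline{\G(e)},\bbf_p)=0\neq\HH^1(\G(e),\bbf_p)\);
\item \(\HH^1_{\mathrm{ct}}(\overline{\G(u)},\bbf_p)\) can be strictly smaller than \(\HH^1(\G(u),\bbf_p)\);
\item \(\HH^2_{\mathrm{ct}}(\overline{\G(u)},\bbf_p)\) can be nonzero.
\end{itemize}
So the local comparison maps are not isomorphisms, and no five-lemma argument works.

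The two \(\HH^2\) dimensions agree only after a global count. Each non-clean component of \(\Gamma\) has exactly one cycle, whence \(|A_p|=|E_p|\), and the vertex and edge defects cancel. This count, combined with the reduction to comparing \(\dim\HH^2\) for all finite-index \(H\) and all \(p\), is the actual proof. That reduction uses \(\cd(G)=2\) and the injectivity of \(\HH^2_{\mathrm{ct}}(\widehat{H},\bbf_p)\to\HH^2(H,\bbf_p)\). None of it appears in your proposal.
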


This result was previously known in the case when \(G\) is the fundamental group of a finite graph of cyclic groups by a result of Cohen and Wykowski (\cite{CW24}).

We also present an application of Theorem~\ref{thm-relative-splitting-virtually-free-groups} concerning profinite properties of words in free groups. Following Hanani, Meiri and Puder (\cite[Def. 1.3]{hanany_orbits_2020}), we say that a word \(w\) in a finitely generated free group \(F\) is \emph{profinitely rigid} if any other word \(u \in F\) that lies in the same \(\Aut(\widehat{F})\)-orbit of \(w\) is also in the same \(\Aut(F)\)-orbit (see Section~\ref{sect:rigid-words} for an equivalent description). A profinitely rigid word \(w \in F\) is \emph{universally profinitely rigid} if it is profinitely rigid in any free extension \(F * H\) of \(F\), where \(H\) is another finitely generated free group.

It has been conjectured that primitive words, those that generate a free factor of \(F\), are profinitely rigid, and this has been confirmed by Puder and Parzanchesvki in \cite{Puder2014} who showed that they are also universally profinitely rigid. Another proof was given by Wilton in \cite{Wi18}. Later, it was shown in \cite{hanany_orbits_2020} that if \(w \in F\) is profinitely rigid then so is \(w^d\) for every \(d \in \bbz\), as well as proving that a basic commutator \([x,y]\) of two elements of a basis of \(F\) is profinitely rigid. In \cite{wilton_profinite_2021}, it was shown that the surface words \(x_1^2x_2^2\cdots x_n^2\) and \([x_1,x_2]\cdots[x_{2g-1},x_{2g}]\) are profinitely rigid in \(F_n\) and \(F_{2g}\) respectively. We prove that the notions of profinite rigidity and universal profinite rigidity are equivalent:

\begin{teo}\label{theorem-profinitely-rigid-words} Let \(F\) be a finitely generated free group and \(w \in F\). Then, \(w\) is profinitely rigid in \(F\) if and only if it is profinitely rigid in \(F * H\) for every finitely generated free group \(H\).
\end{teo}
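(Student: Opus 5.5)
The easy direction comes first. If \(w\) is profinitely rigid in \(F*H\) for every finitely generated free \(H\), take \(H=1\); if trivial \(H\) is excluded, use the retraction \(F*H\to F\) to transport \(\Aut(F*H)\)-equivalences back into \(F\). The substance is the converse: assume \(w\) is profinitely rigid in \(F\), and let \(u\in F*H\) and \(\alpha\in\Aut(\widehat{F*H})\) satisfy \(\alpha(w)=u\). Write \(P=\widehat{F*H}=\widehat F\amalg\widehat H\), and set \(L=\alpha(\widehat F)\) and \(K=\alpha(\widehat H\)), so that \(P=L\amalg K\) and \(u\in L\).

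Apply Theorem~\ref{thm-relative-splitting-virtually-free-groups} to \(G=F*H\), with \(\widehat G=P=L\amalg K\). This gives
\[
F*H=\Big(\underset{g}{\ast}\, G\cap {^gL}\Big)*\Big(\underset{h}{\ast}\, G\cap {^hK}\Big)*F_0 .
\]
Choose the double coset representative of the trivial coset to be \(1\). Then \(A:=G\cap L\) is a free factor of \(F*H\) containing \(u\).

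The key claim is that \(\overline A=L\) and \(\operatorname{rank}A=\operatorname{rank}F\). To prove it, pass to the closure in \(P\): the displayed free decomposition of the dense subgroup \(G\) yields a free profinite decomposition of \(P\), with \(\overline A\) a free factor contained in \(L\). I would then compare with \(P=L\amalg K\) in two ways. First, compute \(\HH_1(-,\bbf_p)\) for all primes \(p\): the factors inside conjugates of \(L\) contribute to the \(\HH_1(L,\bbf_p)\) summand and those inside conjugates of \(K\) to the \(\HH_1(K,\bbf_p)\) summand. Second, use the continuous retraction \(P\to L\) killing the normal closure of \(K\). Under this retraction, \(\overline A\) maps identically, and the images of all other factors lie in subgroups generated by \(L\)-conjugates of \(\overline{G\cap {^gL}}\) and the image of \(\overline{F_0}\). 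The goal is to force the other \(L\)-type factors and the \(L\)-part of \(F_0\) to contribute nothing, so that \(\overline A=L\). Since \(F*H\) is LERF, \(\widehat A=\overline A\cong\widehat F\), and the ranks agree.

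This identification is the step I expect to be the main obstacle. The rank bookkeeping through mod-\(p\) homology and the retraction has to rule out \(\overline A\) being a proper free factor of \(L\) with the remainder of \(L\) assembled from other double cosets or from \(F_0\). If the direct count is not enough, I would first use that a free profinite factor of \(L\) of full rank equals \(L\), since free profinite groups of finite rank are Hopfian.

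Granting the claim, \(A\) is a free factor of \(F*H\) of rank \(\operatorname{rank}F\), with a complement of rank \(\operatorname{rank}H\). Hence there is \(\beta\in\Aut(F*H)\) with \(\beta(A)=F\). Then \(\gamma=\widehat\beta\circ\alpha|_{\widehat F}\) is an automorphism of \(\widehat F\), because \(\alpha(\widehat F)=L=\widehat A\) and \(\widehat\beta(\widehat A)=\widehat F\), and \(\gamma(w)=\beta(u)\in F\). Profinite rigidity of \(w\) in \(F\) gives \(\delta\in\Aut(F)\) with \(\delta(w)=\beta(u)\). Extending \(\delta\) by the identity on \(H\), the automorphism \(\beta^{-1}\circ(\delta*\mathrm{id}_H)\in\Aut(F*H)\) sends \(w\) to \(u\). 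Therefore \(w\) is profinitely rigid in \(F*H\).
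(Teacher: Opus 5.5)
Your skeleton matches the paper's: split \(F*H\) relative to \(L\amalg K\), identify \(L\) with the closure of an abstract free factor, move that factor onto \(F\), and apply rigidity in \(F\). The gap is the key claim \(\overline{A}=L\) for \(A=G\cap L\). Without an extra hypothesis on \(w\) it is false, so no mod-\(p\) homology count or argument with the retraction \(P\to L\) can prove it.

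For a counterexample, take \(F=\langle x_1,x_2\rangle\), \(H=\langle y\rangle\) and \(w=x_1\), which is primitive and hence profinitely rigid in \(F\). Let \(\alpha\) be given by \(x_1\mapsto x_1\), \(x_2\mapsto y^{\lambda}x_2y^{-\lambda}\), \(y\mapsto y\) with \(\lambda\in\widehat{\bbz}\smallsetminus\bbz\). This is a surjective endomorphism of \(P\), hence an automorphism. Then \(u=x_1\), \(L=\overline{\langle x_1\rangle}\amalg{^{y^{\lambda}}\overline{\langle x_2\rangle}}\) and \(K=\overline{\langle y\rangle}\).

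Suppose \(\overline{A}=L\). Then \(A\) is a free factor of \(G\) of rank two, since \(\widehat A\cong L\) by LERF, and it contains \(x_1\), so \(A=\langle x_1\rangle*\langle a\rangle\). Killing \(x_1\) gives a primitive \(\bar a\in\langle x_2,y\rangle\) that topologically generates \({^{y^{\lambda}}\overline{\langle x_2\rangle}}\). Its abelianization is therefore \((\pm1,0)\). Primitive elements of a rank-two free group are determined up to conjugacy by their abelianization, so \(\bar a=fx_2^{\pm1}f^{-1}\) with \(f\in\langle x_2,y\rangle\). Hence \(y^{-\lambda}f\) normalizes \(\overline{\langle x_2\rangle}\), so it lies in it by Lemma~\ref{normalizers}. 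Killing \(x_2\) then gives \(\lambda\in\bbz\), a contradiction.

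The same argument shows \(A=\langle x_1\rangle\), while \(x_2\in G\cap{^{y^{-\lambda}}L}\). The rest of \(L\) is carried by a different double coset, and the homology count is perfectly consistent with this. That is exactly the situation you hoped to exclude.

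The missing idea is the minimality of algebraic closures (Proposition~\ref{algebraic}), together with the reduction the paper makes first. One may assume \(w\) is algebraic in \(F\): if \(U_0\) is the algebraic closure of \(w\) in \(F=U_0*F'\), then \(F*H=U_0*(F'*H)\).

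With \(w\) algebraic, \(F\) is also the algebraic closure of \(w\) in \(F*H\). So \(\overline F\) is the smallest profinite free factor of \(P\) containing \(w\). Since \(\alpha\) preserves this notion, \(L=\alpha(\overline F)\) is the smallest profinite free factor containing \(u\). By the same proposition this is \(\overline U\), where \(U\) is the algebraic closure of \(u\) in \(F*H\). By LERF, \(A=G\cap\overline U=U\), so \(\overline A=L\) and \(\operatorname{rank}A=\operatorname{rank}F\). Your final paragraph then concludes exactly as the paper does.

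A minor point on the easy direction: \(H=1\) is a legitimate choice there. Your fallback via the retraction \(F*H\to F\) would only give an endomorphism of \(F\) sending \(w\) to \(u\), not an automorphism.
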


Observe that any non-trivial element is clearly profinitely rigid in the free group of rank one, and hence Theorem~\ref{theorem-profinitely-rigid-words} implies, in particular, the profinite rigidity of powers of primitive words.

\medskip

Finally, we describe the structure of the paper. We collect in Section~\ref{sect:prelim} essential results about the class \(\mathcal{C}\), abstract and profinite Bass--Serre theories, small cancellation groups and completed group algebras that will be needed throughout the paper. In Section~\ref{sect:profcompl} we describe the profinite completion \(\widehat{G}\) of groups in \(\mathcal{C}\) splitting as graphs of groups with a single vertex, and prove \cref{goodness}. We also pose a conjecture about the profinite completion of certain one-relator groups that is related to Remeslennikov's question (see \cref{Baby}).

The core of Section~\ref{sect:relative} is the proof of \cref{thm-relative-splitting-virtually-free-groups}, and the core of Section~\ref{sect:main-thm} is the proof of a slightly more precise version of \cref{thm-detects-free-products}, from which both \cref{oneendedH} and \cref{cor-relative-splitting} also follows easily. The application to the profinite rigidity of words, \cref{theorem-profinitely-rigid-words}, is explained and proven in Section~\ref{sect:rigid-words}. We finish the paper discussing possible generalizations of these results to the class \(\mathcal{L}\) of limit groups in Section~\ref{sect:limit}.

\subsection*{Acknowledgements}

We would like to thank Henry Wilton for useful discussions on the subject. We are also grateful to Macarena Arenas for providing references on small cancellation groups, and to Ashot Minasyan for references on one-ended one-relator groups.

The first and third authors would like to thank the Isaac Newton Institute for Mathematical Sciences, Cambridge, for support and hospitality during the programme ``Operators, Graphs, Groups" where work on this paper was undertaken. This work was supported by EPSRC grant no EP/Z000580/1 and the Simons Foundation, Award SFI-MPS-T-Institutes-00006117.
The work of the first and second authors is partially supported by the grant  {PID2020-114032GB-I00}  of the Ministry of Science, Innovation and Universities of Spain of the third author by FAPDF and CNPq.

\section{Preliminaries}\label{sect:prelim}

\subsection{Notation}

The disjoint union of two sets \(A\) and \(B\) is denoted by \(A \sqcup B\). For two group elements \(g\,, h \in G\), conjugation is a left action: \(^ gh = ghg^{-1}\). If \(G\) is a profinite group and \(H\) is a closed subgroup of \(G\), then the index \([G\colon H]\) is a \emph{supernatural number}, that is, a formal product \(\prod p^{a_p}\) over all the primes \(p\) such that \(a_p \in \bbz_{\geq 0}\cup \{\infty\}\) (\cite[Sec. 2.3]{ribesProfiniteGraphsGroups2017}). A closed subgroup is open if and only if \([G\colon H]\) is a natural number, that is, \(a_p \in \bbz\) for all \(p\) and almost all \(a_p\) are zero. The order of a profinite group \(G\) is the supernatural number \(|G| =[G\colon \{1\}]\).

We denote the (abstract) free product of two groups \(H\) and \(U\) by \(H * U\), and the free profinite product of two profinite groups \(L\) and \(K\) by \(H \amalg K\). We refer the reader to \cite[Ch. 9]{RibesZalesskii2010} for the definition and basic properties of free profinite products. If \(G\) is an abstract group, we denote by \(\widehat{G} = \varprojlim_{N \unlhd_{\mathrm{f.i.}} G} G/N\) its profinite completion, the inverse limit being taken over all normal subgroups \(N \unlhd G\) such that the index \([G\colon N]\) is finite.

If \(G\) is an abstact (resp. profinite) group, we denote by \(\HH^i(G,A)\) (resp. \(\HH^i_{\mathrm{ct}}(G,A)\)) the \(i\)-th cohomology group (resp. \(i\)-th continuous cohomology group) of a \(G\)-module (resp. discrete \(G\)-module) \(A\). We say that \(\cd_p(G) \leq n\) if \(\HH^i(G,A)\) (resp. \(\HH^i_{\mathrm{ct}}(G,A)\)) vanishes for every \(i > n\) and every \(p\)-primary \(G\)-module (resp. \(p\)-primary discrete \(G\)-module) \(M\).

We denote by \(\bbf_p\) the field of \(p\) elements. If \(X\) is any set and \(R\) is a commutative unital ring, \(R[X]\) denotes the free \(R\)-module with basis \(X\). For any group \(G\) (resp. \(G\)-set \(X\)), \(R[G]\) (resp. \(R[X]\)) has naturally the structure of an \(R\)-algebra (resp. \(R[X]\)-module) called the group algebra of \(G\) over \(R\). If \(G\) is a profinite group, the completed group algebra of \(G\) over \(\bbf_p\) is the profinite ring \(\FFalgebra{G} = \varprojlim_{U \unlhd_o G} \Falgebra{G/U}\) where \(U\) runs through the open subgroups of \(G\). If \(G\) is a profinite group acting continuously on a profinite set \(X = \varprojlim X_i\), where each \(X_i\) is a finite \(G\)-set, we denote by \(\FFalgebra{X}\) the \(\FFalgebra{G}\)-module \(\varprojlim \Falgebra{X_i}\). For any \(G\)-module \(M\), we denote by \(M^G\) the submodule of \(G\)-fixed points.

If \(\epsilon\colon A \to R\) is the augmentation map of an augmented \(R\)-algebra \(A\), we denote by \(I_A\) its kernel \(\ker \epsilon\), called the augmentation ideal of \(A\). It is well known that the augmentation ideal of \(\Falgebra{G}\) (resp. \(\FFalgebra{G}\)) is generated (resp. topologically generated) as an ideal by the elements \(x_i - 1\) where \(\{x_i\}\) is any generating (resp. topologically generating) set of \(G\). If \(H\) is a subgroup of \(G\) and \(M\) is a left \(\Falgebra{H}\)-module, we denote by \({^GM}\) the induced left \(\Falgebra{G}\)-module \(\Falgebra{G} \otimes_{\Falgebra{H}} M\). When \(G\), \(H\) and \(M\) are profinite, \(\widehat{^GM}\) denotes the profinite induced module \(\FFalgebra{G} \widehat{\otimes}_{\FFalgebra{H}} M\) where \(\widehat{\otimes}\) denotes the completed tensor product (\cite[Sec. 5.5]{RibesZalesskii2010}).

If \(F\) is a free group with basis \(x_1\,,\ldots\,,x_d\) and \(a \in F\) is any word, the \emph{Fox derivative} \(\frac{\partial a}{\partial x_i}\) denotes the unique element of \(\bbz[F]\) satisfying \[a - 1 = \frac{\partial a}{\partial x_1}(x_1-1) + \cdots + \frac{\partial a}{\partial x_d}(x_d-1)\,.\]

\subsection{The fundamental group of a graph of groups}

A \emph{graph} \(Y = V(Y) \sqcup E(Y)\) is the disjoint union of a set \(V = V(Y)\) of \emph{vertices} and a set \(E = E(Y)\) of \emph{edges} together with two maps \(i,t\colon E \to V\) that associate to each edge \(e \in E\) its \emph{initial} \(i(e)\) and terminal \(t(e)\) vertices. Loops \(i(e) = t(e)\) are allowed. In other words, we consider directed multigraphs. We say that \(Y\) is connected if it is connected as an undirected multigraph. A \emph{tree} is a connected graph which has no undirected cycles. By Zorn's lemma, every connected graph has a maximal subtree.

A \emph{graph of groups} \((Y, \G)\) consists of a graph \(Y\), vertex groups \(\{\G(v) \colon v \in V(Y)\}\) and edge groups \(\{\G(e)\colon e \in E(Y)\}\) together with two homomorphisms \(i_e\,, t_e\colon \G(e) \to \G(i(e))\,, \G(t(e))\) for each edge \(e \in E(Y)\). If all the homomorphisms \(i_e\,, t_e\) are injective, the graph of groups is called \emph{faithful}. We will only consider faithful graphs of groups.

Given a connected graph of groups \((Y,\G)\) and a maximal subtree \(T\) of \(Y\), the \emph{fundamental group} \(G = \pi(Y,\G,T)\) of \((Y,\G)\) with respect to \(T\) is the group with presentation:
\begin{itemize}
    \item generators \(\{\G(v), q_e\colon v \in V(Y)\,, e \in E(Y)\}\);
    \item relations of each \(\G(v)\) for each vertex \(v \in V(Y)\);
    \item relations \({^{q_e}i_e(g)} =q_ei_e(g)q_e^{-1} = t_e(g)\) for each \(e \in E(Y)\) and \(g \in \G(e)\);
    \item relations \(q_e = 1\) for each \(e \in T\).
\end{itemize}

One of the first results of Bass--Serre theory (see \cite[Sec. I.7]{dicksGroupsActingGraphs1989}) is that any other maximal subtree yields an isomorphic fundamental group. Hence, \(\pi(Y,\G)\) is well defined up to isomorphism. Another result (\cite[Cor. I.7.5]{dicksGroupsActingGraphs1989}) states that if \((Y,\G)\) is connected and faithful, then the canonical maps \(\G(v) \to \pi(Y,\G)\) are injective and we may identify each \(\G(v)\) with its image.

Let \(G\) act faithfully on a tree \(X\) and let \(S \subset X\) be a \emph{connected transversal} for the \(G\)-action. This is a subset of \(X\) (in general, not a subgraph) that is in bijection with \(G / X\), any two of whose vertices are connected by a path in \(S\) and such that \(i(e) \in S\) for each edge \(e \in S\). For each edge \(e \in S\), there is a unique vertex of \(S\) in the same \(G\)-orbit as \(t(e)\), so we choose \(q_e \in G\) such that \(q_e\cdot t(e) \in S\), taking \(q_e = 1\) if \(t(e) \in S\). The collection \((q_e\colon e \in E(Y) \cap S)\) is a \emph{connecting family} for \(S\). For each \(s \in S\), denote by \(G_s\) its stabilizer in \(G\). Note that \(G_{q_e\cdot t(e)} = {^{q_e} G_{t_e}}\) and that for every edge \(e \in S\), \(G_e = G_{i(e)} \cap G_{t(e)} = G_{i(e)} \cap {^{q_e^{-1}}G_{q_e\cdot t(e)}}\), so that \({^{q_e}G_e} \leq G_{q_e\cdot t(e)}\). These are exactly the conjugation relations that appear in the definition of the fundamental group of a graph of groups.

The structure theorem of Bass--Serre theory says that \(G \simeq \pi(G/ X, \G)\), where \(\G(s) = G_s\) for \(s \in S\) and \(i_e\,, t_e\colon \G(e) \to \G(i(e))\,, \G(q_e\cdot t(e))\) are given by \(g \mapsto g, {^{q_e}g}\) respectively. Moreover, the isomorphism \(\varphi\colon \pi(G/ X, \G) \to G\) is defined by the inclusion maps \(\G(v) \to G\) and \(q_e \mapsto q_e\) for each vertex \(v\) and each edge \(e\).

Let \(X\) be a \(G\)-tree with finite edge stabilizers. We say that an edge \(e\) of \(X\) is \emph{compressible} if two vertices of \(e\) are not in the same \(G\)-orbit and, for at least one vertex \(v\) of \(e\), we have \(G_v = G_e\). Otherwise, we say that the edge \(e\) is \emph{incompressible}. The \(G\)-tree \(X\) is \emph{incompressible} if all its edges are incompressible (see \cite[Lem. III.7.2]{dicksGroupsActingGraphs1989} for an alternative definition).

\subsection{Derivations}

Let \(G\) be a group and \(A\) a \(G\)-module. A \emph{derivation} is a map \(d\colon G \to A\) such that \[d(gh) = d(g) + g\cdot d(h)\quad\text{for all }g\,, h \in G\,.\] A derivation \(d\) is called \emph{inner} if there exists an element \(a \in A\) such that \[d(g) = (g-1)\cdot a\quad\text{for all }g \in G\,.\] The set of derivations from \(G\) to \(A\) is denoted by \(\mathrm{C}^1(G,A)\), and the set of inner derivations by \(\mathrm{B}^1(G,A)\). We can identify the quotient \(\mathrm{C}^1(G,A)/\mathrm{B}^1(G,A)\) with the first cohomology group \(\HH^1(G,A)\).

A direct application of the definition gives the following lemma.

\begin{lem}\label{claim-conjugate-derivations} Let \(d\colon G \to A\) be a derivation from a group \(G\) to a \(G\)-module \(A\), and let \(h \in G\). If \(d(h) = (h-1)a\) for some \(a \in A\), then \[d({^g h}) = ({^gh}-1)\cdot (ga-d(g))\,.\]
\end{lem}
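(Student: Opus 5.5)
The identity is a direct computation from the derivation rule \(d(xy)=d(x)+x\cdot d(y)\), applied to the product \({^gh}=g\,h\,g^{-1}\). We first record the value of \(d\) on inverses. Since \(0=d(1)=d(gg^{-1})=d(g)+g\,d(g^{-1})\), we get
\[
d(g^{-1})=-g^{-1}d(g).
\]

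Next we expand the triple product, using the derivation rule twice and then the hypothesis \(d(h)=(h-1)a\):
\[
d(ghg^{-1})=d(g)+g\,d(h)+gh\,d(g^{-1})=d(g)+g(h-1)a-ghg^{-1}d(g).
\]

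Finally we rewrite the right-hand side in terms of \({^gh}-1\). Since \(ghg^{-1}\cdot g=gh\), we have
\[
g(h-1)a=(gh-g)a=(ghg^{-1}-1)\,ga.
\]
For the remaining terms,
\[
d(g)-ghg^{-1}d(g)=-({^gh}-1)\,d(g).
\]
Adding the two displays gives
\[
d({^gh})=({^gh}-1)\cdot\bigl(ga-d(g)\bigr),
\]
which is the claimed formula.

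No step is genuinely difficult. The only points needing care are that conjugation is the left action \({^gh}=ghg^{-1}\), as fixed in the Notation subsection, and that \(G\) acts on \(A\) on the left, so that the factors \(g\) and \(gh\) are pulled out on the correct side.
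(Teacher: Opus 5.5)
Your computation is correct and is exactly the direct application of the derivation rule that the paper has in mind; the paper gives no details beyond calling the lemma a direct application of the definition. Expanding \(d(ghg^{-1})\) with \(d(g^{-1})=-g^{-1}d(g)\) and regrouping as you do gives the stated formula.
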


As a consequence, if \(d\) is an inner derivation when restricted to a subgroup \(H \leq G\), then \(d\) is an inner derivation when restricted to any \(G\)-conjugate of \(H\). This applies in particular to \(H = \ker d = \{g \in G\colon d(g) = 0\}\). We will also make use of the following theorem of Dicks and Dunwoody (\cite[Cor. IV.2.8 and Rem. IV.2.9]{dicksGroupsActingGraphs1989}):

\begin{teo}\label{DD} If \(d\colon G \to P\) is a derivation to a projective \(\Falgebra{G}\)-module \(P\), then there exists a \(G\)-tree \(X\) with finite edge stabilizers and in which a subgroup \(H \leq G\) stabilizes a vertex if and only if the restriction of \(d\) to \(H\) is inner. In particular, if \(G\) is torsion free, this characterizes \(H\) as a free factor.
\end{teo}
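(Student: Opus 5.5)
This is the theorem of Dicks and Dunwoody, which the paper quotes from \cite{dicksGroupsActingGraphs1989}; the plan below follows the almost-invariant-set route to it rather than a new argument. I would first reduce to a free module. Since \(P\) is projective, it is a direct summand of a free module \(\Falgebra{G}^{(I)}\) with a retraction \(\rho\). Composing \(d\) with the inclusion gives a derivation \(d'\) into \(\Falgebra{G}^{(I)}\), and \(d=\rho\circ d'\). A derivation into \(P\) is inner on \(H\) if and only if its composite into \(\Falgebra{G}^{(I)}\) is inner on \(H\): in one direction apply \(\rho\); in the other, an element \(a\in P\) with \(d(h)=(h-1)a\) works verbatim in the bigger module. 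So I may assume \(P=\Falgebra{G}^{(I)}\). Moreover, a derivation of \(G\) only sees the coordinates in the (possibly infinite) support of its values, and it is inner if and only if it is inner coordinatewise with a common bound, so I work with the coordinate derivations \(d_i\colon G\to\Falgebra{G}\) simultaneously.

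Next I would pass to almost invariant functions. The module \(\Falgebra{G}\) embeds in the coinduced module \(\bbf_p^{G}\) of all functions \(G\to\bbf_p\), and Shapiro's lemma gives \(\HH^1(G,\bbf_p^G)=0\). Hence each \(d_i(g)=(g-1)a_i\) for some function \(a_i\colon G\to\bbf_p\), and \(ga_i-a_i\) is finitely supported for every \(g\). The level sets \(a_i^{-1}(c)\) are therefore \(G\)-almost invariant subsets of \(G\), since their symmetric differences with their translates are finite. These level sets, their \(G\)-translates and the finite subsets generate a \(G\)-stable Boolean subring \(B\) of the power set of \(G\).

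The restriction \(d|_H\) is inner if and only if one can modify the \(a_i\) by elements of \(\Falgebra{G}\), that is by finitely supported functions, to make them \(H\)-invariant. In set terms this says that every element of \(B\) is, up to a finite set, a union of right \(H\)-cosets, i.e.\ it is \(H\)-almost equal to an \(H\)-invariant set.

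The main step is Dunwoody's theorem on Boolean rings. I would show that \(B\) is generated, as a \(G\)-Boolean ring modulo finite sets, by a \(G\)-invariant nested family \(E\) of proper almost invariant sets. The approach is to choose generators of minimal ``thinness'', meaning minimal size of the coboundary \(\delta A\) in a Cayley graph, and then prove that thin cuts can be chosen nested by the standard cut-and-paste exchange argument. Such a nested family, closed under complement and ordered by inclusion, satisfies the axioms of a tree set. It yields a \(G\)-tree \(X\) whose edges are the elements of \(E\). Edge stabilizers are finite because a thin cut has finite coboundary and only finitely many group elements move it onto itself.

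I expect the hardest parts to be nestedness, together with the bookkeeping when \(I\) is infinite; there I would first treat finitely generated \(G\) and then take unions. Finally, I would check that \(H\) fixes a vertex of \(X\) if and only if \(H\) acts on \(E\) with every \(e\in E\) \(H\)-almost invariant in the coset sense, which by the criterion of the previous paragraph is equivalent to \(d|_H\) being inner. If \(G\) is torsion free, the edge stabilizers are trivial, so by the structure theorem of Bass--Serre theory \(G\) is the free product of the vertex stabilizers of a transversal and a free group. The subgroups satisfying the inner condition are then exactly the subgroups of vertex stabilizers, which are free factors, giving the final assertion.
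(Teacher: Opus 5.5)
Your route differs from the paper's. The paper only quotes Dicks--Dunwoody, where this statement is a corollary of their Almost Stability Theorem. That theorem is applied to the almost equality class of a function $z\colon G\times I\to\bbf_p$ with $d(g)=gz-z$: a subgroup fixes a member of that class if and only if $d$ is inner on it, and freeness of the $G$-set $G\times I$ is what makes the edge stabilizers finite. Your Dunwoody-cuts plan is a reasonable alternative when $G$ is finitely generated, and that is the only case the paper uses (in Theorem~\ref{relative splitting}, $G$ is finitely generated and $M$ is finitely generated free). In that case only finitely many $d_i$ are nonzero, and the level sets have finite coboundary in a locally finite Cayley graph.

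For the theorem as stated, with $G$ arbitrary, there is a genuine gap.
\begin{enumerate}
\item ``Treat finitely generated $G$ and then take unions'' is not an argument. The trees for an exhausting chain $G_1\le G_2\le\cdots$ come from unrelated cut systems in different Cayley graphs. There are no equivariant maps between them along which to pass to a limit, and ``$H$ fixes a vertex'' is not a finitary property that would survive one.
\item Without finite generation there is no locally finite Cayley graph, and the level sets need not have finite coboundary in any Cayley graph, so the thinness argument does not start.
\item Your coordinatewise reduction also fails there. Let $H=\bigcup_i H_i$ be a strictly increasing union of finite groups and $a_i=\sum_{h\in H_i}h$. Then $d(g)=\sum_i(g-1)a_ie_i$ is a well-defined derivation $H\to\Falgebra{H}^{(\mathbb{N})}$ with every coordinate inner. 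Yet $d$ is not inner, because any correcting element would need every coordinate equal to $a_i\neq 0$.
\end{enumerate}
Overcoming exactly this is what the Almost Stability Theorem does, by working with the $G$-set $G\times I$ directly rather than with a Cayley graph.

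Even for finitely generated $G$, two steps need more than you wrote.

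First, you need to fix which side acts. The level sets $A$ of the $a_i$ satisfy $|gA\triangle A|<\infty$ for the module action, so their left translates add nothing new. They have finite coboundary in the Cayley graph with edges $\{x,sx\}$, on which $G$ acts by right multiplication. So $B$ must be generated by the right translates $Ag$, and the tree carries that action. This is consistent with your test ``union of right cosets $Hx$'', which is invariance under multiplication by $H$ on the other side.

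Second, and more importantly, the final equivalence is the heart of the proof, not a check.
\begin{enumerate}
\item If $H$ fixes a vertex, use the equivariant map sending $x\in G$ to the vertex determined by the cuts containing $x$, together with finiteness of edge stabilizers. This shows that each cut contains or misses all but finitely many cosets $Hx$, and meets the remaining ones in finite or cofinite sets.
\item For the converse you must exclude that $H$ fixes no vertex. All cuts in $E$ have coboundary of size at most some $N$. Since $G$ acts freely on the vertices of the Cayley graph, edge stabilizers have order at most $2N$.
\item Hence an increasing union of elliptic subgroups with no common fixed vertex would consist of groups fixing edges, so it would be finite. Therefore $H$ contains a hyperbolic element $h$.
\item A cut whose edge lies on the axis of $h$ separates $h^n$ for $n\gg 0$ from $h^n$ for $n\ll 0$. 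So that cut is not almost a union of cosets of $H$.
\end{enumerate}
You also need to subdivide to remove inversions, since $gA$ may equal the complement of $A$. With these additions your argument proves the theorem for finitely generated $G$, which is all the paper needs.
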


\subsection{The profinite fundamental group of a graph  of profinite groups}

Given a finite graph of profinite groups \((Y,\G)\), the \emph{profinite fundamental group} \(\hpi(Y,\G)\) is defined as the profinite group with the same generators and relations as its abstract analog \(\pi(Y,\G)\) (see \cite[Sec. 6.2]{ribesProfiniteGraphsGroups2017}). There is one important difference with the abstact case: in the profinite setting, the canonical homomorphisms \(\G(v) \to \hpi(Y,\G)\) are not embeddings in general, but one can always replace them by their images in \(\hpi(Y,\G)\) and then the embedding will hold. From now on, we assume that the vertex subgroups \(\G(v)\) embed into \(\hpi(Y,\G)\). If all the vertex subgroups are finitely generated, then \(\hpi(Y,\G)\) is isomorphic to the profinite completion of \(\pi(Y,\G)\).

Given the profinite fundamental group \(G = \pi_1(Y,\G)\) of a finite graph of profinite groups, by \cite[Thm. 6.5.2]{ribesProfiniteGraphsGroups2017} we have the following exact sequence of left \(\FFalgebra{G}\)-modules:
\begin{equation}\label{exactseq} 0 \to \bigoplus_{e \in E(Y)} \FFalgebra{G/ \G(e)} \to \bigoplus_{v \in V(Y)} \FFalgebra{G/ \G(v)} \to \bbf_p \to 0\,.
\end{equation}

For a continuous \(G\)-module \(M\), there exists a Mayer-Vietoris sequence associated to \((Y,\G)\) (see \cite[Thm. 8.4.7]{Wilkes}):
\begin{equation}\label{Mayer-Vietoris}
\cdots \to \HH^n_{\mathrm{ct}}(G,M) \to \bigoplus_{v \in V(Y)} \HH^n_{\mathrm{ct}}(\G(v),M) \to \bigoplus_{e \in E(Y)} \HH^n_{\mathrm{ct}}(\G(e),M) \to \HH^{n+1}_{\mathrm{ct}}(G,M) \to \cdots
\end{equation}

Let \((Y,\G)\) be a graph of abstract groups and assume that \(G = \pi(Y,\G)\) is residually finite. We denote by \((Y,\overline{\G})\) the graph of groups with \(\overline{\G}(x) = \overline{\G(x)}\) the closure of \(\G(x)\) in \(\widehat{G}\). If all the vertex and edge subgroups \(\G(x)\) are finitely generated, then we have \[\hpi(Y,\overline{\G}) \simeq \widehat{\pi(Y,\G)}\,.\]
We say that a graph of abstract groups \((Y,\G)\) is \emph{efficient} if each vertex and edge group \(\G(x)\) is closed in the profinite topology of \(\pi(Y,\G)\) and moreover this topology induces the full profinite topology on each \(\G(x)\). For example, this occurs if \(\pi(Y,\G)\) is LERF, \(Y\) is finite and all the vertex and edge subgroups \(\G(x)\) are finitely generated.

The simplest example of the profinite fundamental group of a finite graph of profinite groups is a free profinite product: if all edge groups are trivial, then the fundamental group is the free profinite product of the vertex groups and a free profinite subgroup. We will need the following lemmas.

\begin{lem}[{\cite[Prop. 9.1.12]{RibesZalesskii2010}}]\label{normalizers} Let \(G_1\,,\ldots\,, G_n\) be profinite groups and \(G = G_1 \amalg \cdots \amalg G_n\) be their free profinite product. Then \(G_i \cap {^g G_i} =  \{1\}\) for every \(x \in G \smallsetminus G_i\).
\end{lem}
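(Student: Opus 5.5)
The plan is to argue via the action of \(G\) on its standard profinite tree rather than through finite quotients. First I reduce to two factors. Fix \(i\) and write \(G = G_i \amalg B\), where \(B\) is the free profinite product of the remaining \(G_j\). This is legitimate because free profinite products are associative. So it suffices to treat \(G = A \amalg B\) with \(A = G_i\), and to show \(A \cap {^gA} = \{1\}\) for \(g \in G \smallsetminus A\).

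Next I realise \(G\) as the profinite fundamental group of the graph of profinite groups with two vertices carrying \(A\) and \(B\) and a single edge carrying the trivial group. By the profinite Bass--Serre theory (\cite[Ch. 6]{ribesProfiniteGraphsGroups2017}), \(G\) acts continuously on its standard profinite tree
\[
S = G/A \sqcup G/B \sqcup G
\]
(vertices \(G/A \sqcup G/B\), edges \(G\)). Vertex stabilisers are conjugates of \(A\) or \(B\), and edge stabilisers are conjugates of the edge group, hence trivial.

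The argument itself is now short. Let \(v = A\) be the vertex coset \(1\cdot A\). Since \(g \notin A\), the vertices \(v\) and \(gv\) are distinct. The stabiliser of \(gv\) is \({^gA}\), so \(H = A \cap {^gA}\) fixes both \(v\) and \(gv\). In a profinite tree, any two vertices are joined by a unique smallest subtree \([v,gv]\), which is the intersection of all subtrees containing both. Since \(H\) fixes \(v\) and \(gv\), it leaves \([v,gv]\) invariant, and I want to conclude that \(H\) fixes \([v,gv]\) pointwise. Because \(v \neq gv\), the subtree \([v,gv]\) contains at least one edge \(e\). Hence \(H \leq G_e = \{1\}\).

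The main obstacle is the pointwise-fixing step: for a profinite group acting on a profinite tree, if \(H\) fixes two vertices then it fixes every point of the geodesic between them. In the abstract case this is immediate from uniqueness of geodesics. In the profinite case I would invoke the standard result that \(S^H\), the fixed-point set of \(H\), is a subtree whenever it is nonempty (\cite[Thm. 4.1.5]{ribesProfiniteGraphsGroups2017}, stated there for finite \(H\) and extended to arbitrary closed \(H\) by a compactness/inverse-limit argument). This gives \([v,gv] \subseteq S^H\). If that reference were unavailable, I would prove it directly by passing to finite quotients \(G/U\) acting on the quotient graphs \(S/U\). Those are not trees but are approximated by trees, so the argument becomes more delicate, and I would rather quote the tree fixed-point theorem.
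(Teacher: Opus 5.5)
Your proof is correct, and it takes a different route from the one behind the citation. The paper gives no argument of its own here. It imports the statement from Ribes--Zalesskii, where it is established within the theory of free pro-$\mathcal{C}$ products, without profinite trees.

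You instead use profinite Bass--Serre theory: $A\cap{}^{g}A$ fixes the two distinct vertices $A$ and $gA$ of the standard tree $S$, hence fixes an edge, and edge stabilisers are trivial. What this buys is uniformity. The same argument gives $A\cap{}^{g}B=\{1\}$ for every $g$, since the vertices $A$ and $gB$ are always distinct. More generally, for any profinite fundamental group of a graph of groups, it shows that two distinct vertex stabilisers of the standard tree meet inside a conjugate of an edge group. The price is that all the content is moved into two substantial theorems: the standard graph is a profinite tree, and fixed-point sets are subtrees. The textbook argument needs neither.

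Two remarks. First, you do not need $[v,gv]$. $S^H$ is itself a connected subgraph containing two distinct vertices, so it already contains an edge, because an edgeless profinite graph with at least two vertices is disconnected.

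Second, and more important, your parenthetical claim is not justified as stated. You say the fixed-point theorem extends from finite $H$ to arbitrary closed $H$ ``by a compactness/inverse-limit argument''. But a closed subgroup of a profinite group is not approximated by finite subgroups. There are two fixes:
\begin{enumerate}
\item Quote a version of the fixed-point theorem valid for arbitrary closed subgroups.
\item Reduce to finite factors first. Since $G=\varprojlim (A/U\amalg B/V)$ and $A=\varprojlim A/U$ is closed, an element $g\notin A$ and an element $1\neq h\in A\cap{}^{g}A$ keep these properties in some $A/U\amalg B/V$. There the relevant intersection is finite, so the finite version of the fixed-point theorem suffices.
\end{enumerate}
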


\begin{lem}\label{elliptic} Let \(P = K \amalg L\) be a free profinite product.
\begin{enumerate}[(i)]
    \item \(I_{\FFalgebra{P}} \simeq \widehat{^{P}I_{\FFalgebra{K}}} \oplus \widehat{^{P}I_{\FFalgebra{L}}}\)
    \item Let \(U \leq P\) be a virtually procyclic group. If \(U \cap K\) is infinite, then \(U \leq K\).
\end{enumerate}
\end{lem}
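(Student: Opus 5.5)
Part (i) is the standard description of the augmentation ideal of a free profinite product. I would obtain it from the exact sequence \eqref{exactseq} applied to \(P=K\amalg L\), viewed as the profinite fundamental group of the graph with two vertices \(v_K, v_L\), one edge \(e\), and trivial edge group. This gives
\[0 \to \FFalgebra{P} \to \FFalgebra{P/K}\oplus\FFalgebra{P/L} \to \bbf_p \to 0.\]
Independently, for any closed subgroup \(H\) the sequence \(0\to \widehat{^{P}I_{\FFalgebra{H}}}\to \FFalgebra{P}\to \FFalgebra{P/H}\to 0\) is exact, because \(\FFalgebra{P}\) is a free, hence flat, \(\FFalgebra{H}\)-module. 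I would then form the map
\[\widehat{^{P}I_{\FFalgebra{K}}}\oplus\widehat{^{P}I_{\FFalgebra{L}}}\to I_{\FFalgebra{P}}, \qquad (a,b)\mapsto a+b,\]
and compare the diagram built from the direct sum of the two sequences \(0\to \widehat{^{P}I_{\FFalgebra{H}}}\to\FFalgebra{P}\to\FFalgebra{P/H}\to 0\) with the sequence from \eqref{exactseq}. The difference map \(\FFalgebra{P}\oplus\FFalgebra{P}\to\FFalgebra{P}\), \((x,y)\mapsto x-y\), restricted appropriately, together with the snake lemma, should give that this map is an isomorphism. An alternative, more hands-on route is to note that \(I_{\FFalgebra{P}}\) is topologically generated by the \(k-1\) and \(l-1\). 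Surjectivity then follows from this. Injectivity follows from the comparison of the two exact sequences above: the kernel of \(\FFalgebra{P/K}\oplus\FFalgebra{P/L}\to\bbf_p\) is exactly the image of \(\FFalgebra{P}\) embedded diagonally-with-sign.

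For (ii), I would use profinite Bass--Serre theory. \(P\) acts on its standard profinite tree \(T\) with trivial edge stabilizers, and the vertex stabilizers are the conjugates of \(K\) and \(L\). The plan is to show that \(U\) fixes a vertex, and that this vertex must be the one fixed by \(K\).

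First, \(U\) has an open procyclic normal subgroup \(C\). The intersection \(C\cap K\) is open in \(U\cap K\), hence infinite. Let \(Z=C\cap K\). Since \(Z\) is a nontrivial procyclic group, it is infinite and has trivial edge stabilizer intersection. Its fixed subtree is then exactly the single vertex \(v_0\) stabilized by \(K\), because the set of fixed points of a group is a subtree and edges have trivial stabilizers.

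Now take \(u\in U\). Because \(Z\) is an infinite closed subgroup of a virtually procyclic group, \(Z\cap {^u Z}\) is open in \(Z\), hence nontrivial and in fact infinite. This nontrivial subgroup fixes both \(v_0\) and \(u v_0\). If these vertices were distinct, it would fix the path between them, hence an edge, which contradicts triviality of edge stabilizers. Hence \(u v_0=v_0\), so \(u\in K\).

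In group-theoretic terms this is the statement that \(K\cap {^u K}\neq 1\) forces \(u\in K\), which is \cref{normalizers}. So I can avoid trees entirely: for \(u\in U\), the subgroup \(U\cap K\) is infinite in a virtually procyclic group, so \((U\cap K)\cap{^u(U\cap K)}\) has finite index in \(U\cap K\) and is nontrivial. It lies in \(K\cap{^uK}\), and \cref{normalizers} gives \(u\in K\).

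The main obstacle is the commensurability claim: that two infinite closed subgroups of a virtually procyclic profinite group intersect in a subgroup open in each. I would handle it by passing to \(C\), which is procyclic, hence a product \(\prod_p \mathbb{Z}_p\)-or-finite factors. In such a group, infinite closed subgroups of \(C\) are open? This is false, e.g. \(\mathbb{Z}_2\leq\widehat{\mathbb{Z}}\). So the key case must instead be argued as follows. The intersection \(Z={^u}Z'\) with \(Z'=U\cap K\cap C\) and \({^u}Z'\) both lie in the abelian group \(C\), so \(u\) normalizes \(C\) and acts on it by an automorphism of finite order. Then \(Z'\cap {^uZ'}\supseteq\) the intersection of the finitely many \(U/C\)-translates of \(Z'\). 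I would show this intersection is nontrivial using that procyclic groups have a unique closed subgroup of each supernatural order, so all translates of \(Z'\) coincide. This makes \(Z'\) normalized by \(u\), and \(Z'\neq 1\) suffices for \cref{normalizers}.
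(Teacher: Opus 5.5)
Your proposal is correct and is essentially the paper's proof: (i) is read off from \eqref{exactseq} for \(P=K\amalg L\), and your snake-lemma comparison just spells this out. In the hands-on variant, note that injectivity comes from injectivity of \(\FFalgebra{P}\to\FFalgebra{P/K}\oplus\FFalgebra{P/L}\), not from exactness in the middle. For (ii), after you rightly discard the false general commensurability claim, your argument is exactly the paper's: \(C\cap K\) is a nontrivial closed subgroup of the open normal procyclic \(C\unlhd U\), hence characteristic in \(C\) and so normal in \(U\), whence \(U\le N_P(C\cap K)\le K\) by \cref{normalizers}. The paper produces the same normal procyclic subgroup and quotes \cite[Cor. 7.1.5(a)]{ribesProfiniteGraphsGroups2017} for the normalizer bound, where you derive it from \cref{normalizers}.
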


\begin{proof} (i) In the case of a free profinite product, the exact sequence \eqref{exactseq} has the following form: \[0 \to \FFalgebra{P} \to \FFalgebra{P/ K} \oplus \FFalgebra{P / L} \to \bbf_p \to 0\,.\] Hence, \(I_{\FFalgebra{P}} \simeq \widehat{^{P}I_{\FFalgebra{K}}} \oplus \widehat{^{P}I_{\FFalgebra{L}}}\).

(ii) Since \(U \cap K\) is infinite, it contains a non-trivial infinite normal procyclic subgroup \(M \unlhd U\). However, the normalizer \(N_P(M)\) is a subgroup of \(K\) by \cite[Cor. 7.1.5(a)]{ribesProfiniteGraphsGroups2017}. This finishes the proof.
\end{proof}

\begin{lem}\label{projectiveintersection} Let \(p\) be a prime and \(P = \hpi(Y,\G)\) be the profinite fundamental group of a finite graph of profinite groups. Let \(U\) be a torsion-free closed subgroup of \(P\) and assume that
\begin{enumerate}
    \item \(\cd_p({^g U} \cap \G(e)) = 0\) for all edges \(e\) and every \(g \in P\);
    \item \(\cd_p({^g U} \cap \G(v)) \leq 1\) for all vertices \(v\) and every \(g \in P\).
\end{enumerate}
Then \(\cd_p(U) \leq 1\).
\end{lem}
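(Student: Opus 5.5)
We show that \(\HH^2_{\mathrm{ct}}(U,\bbf_p)\)-type obstructions vanish by restricting the exact sequence \eqref{exactseq} to \(U\) and reading off a projective resolution of \(\bbf_p\) of length one. Since \(U\) is closed in \(P\), restricting \eqref{exactseq} to \(\FFalgebra{U}\) gives an exact sequence of \(\FFalgebra{U}\)-modules
\[
0 \to \bigoplus_{e} \FFalgebra{P/\G(e)} \to \bigoplus_{v} \FFalgebra{P/\G(v)} \to \bbf_p \to 0 .
\]
Each \(P/\G(x)\) is a profinite \(U\)-set. Decomposing it into \(U\)-orbits, by a limit argument over the finite quotients \(P/\G(x)N\) with \(N\) open normal, we obtain
\[
\FFalgebra{P/\G(x)} \simeq \varprojlim \bigoplus_{g} \FFalgebra{U/(U\cap {^g\G(x)})},
\]
where \(g\) ranges over representatives of the double cosets \(U\backslash P/\G(x)\). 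More precisely, the module is a profinite (topological) direct sum, indexed by the profinite space \(U\backslash P/\G(x)\), of modules of the form \(\FFalgebra{U/(U\cap {^g\G(x)})}\).

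From the displayed sequence the long exact sequence for \(\mathrm{Ext}_{\FFalgebra{U}}(-,M)\) holds for any finite \(p\)-primary \(U\)-module \(M\). Setting \(A_x=\FFalgebra{P/\G(x)}\), we get
\[
\cdots \to \mathrm{Ext}^{n}(\textstyle\bigoplus_e A_e, M) \to \HH^{n+1}_{\mathrm{ct}}(U,M) \to \mathrm{Ext}^{n+1}(\textstyle\bigoplus_v A_v, M) \to \cdots .
\]
By a Shapiro-type lemma for profinite permutation modules (Ribes–Zalesskii style, the profinite analogue of \(\mathrm{Ext}(\FFalgebra{U/H},M)=\HH^*(H,M)\) extended to profinite sums), we expect \(\mathrm{Ext}^{k}_{\FFalgebra{U}}(A_x,M)\) to vanish whenever \(k>\sup_g \cd_p(U\cap{^g\G(x)})\). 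Note that \(\cd_p(U\cap {^g\G(x)}) = \cd_p({^{g^{-1}}U}\cap \G(x))\), so hypothesis (1) gives vanishing for \(k\ge 1\) on edges, and hypothesis (2) gives vanishing for \(k\ge 2\) on vertices. Taking \(n\ge 1\), that is \(\HH^{n+1}\) with \(n+1\ge 2\), the flanking terms vanish, so \(\HH^{k}_{\mathrm{ct}}(U,M)=0\) for all \(k\ge2\). Hence \(\cd_p(U)\le 1\). Here it suffices to test on simple modules \(M\), and torsion-freeness ensures that the \(\cd_p\) hypotheses make sense without finite-order obstructions.

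The main obstacle is the Shapiro step for the non-discrete sum. The module \(A_x\) is not an honest direct sum of induced modules when \(U\backslash P/\G(x)\) is infinite, so we cannot simply split it. The first attempt is the equivalent formulation in terms of projectivity: \(\cd_p(H)=0\) means \(\FFalgebra{U/H}\) is projective (its \(p\)-Sylow is trivial), and a profinite sum of projective permutation modules indexed continuously by a profinite space is still projective. This follows by writing \(A_e\) as an inverse limit of finite sums and using that Ext commutes with the relevant limits for finite \(M\), via a compactness argument on the space of double cosets. This gives that \(\bigoplus_e A_e\) is projective. For the vertex modules we need projective dimension at most \(1\), which we obtain in the same way: first work with a finite-index open \(V\) containing each stabilizer, where the sum is finite, then pass to the limit. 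Having projective dimension \(0\) for the edge term and \(\le 1\) for the vertex term, the short exact sequence gives projective dimension \(\le 1\) for \(\bbf_p\), which is \(\cd_p(U)\le1\).
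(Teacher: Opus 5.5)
The paper gives no argument for this lemma; it simply quotes \cite[Cor.~9.3.2]{ribesProfiniteGraphsGroups2017}. You instead give a self-contained proof, and it is correct: restrict the profinite tree sequence \eqref{exactseq} to \(U\), treat \(\FFalgebra{P/\G(x)}\) as the permutation module of the profinite \(U\)-space \(P/\G(x)\), and bound \(\mathrm{Ext}\) by Shapiro's lemma plus a limit argument. Your argument also shows that torsion-freeness of \(U\) is never needed. The citation buys brevity, while your route shows exactly where hypotheses (1) and (2) enter.

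The step you flag as the main obstacle should be stated precisely. At a finite stage \(X_N=P/\G(x)N\) the summands are \(\Falgebra{U/(U\cap {^g(\G(x)N)})}\), which have open stabilizers. They are not \(U\cap{^g\G(x)}\) as in your first display. For a finite discrete module \(M\) one has
\[
\mathrm{Ext}^k_{\FFalgebra{U}}\bigl(\FFalgebra{P/\G(x)},M\bigr)\simeq\varinjlim_N\bigoplus_{g\in U\backslash P/\G(x)N}\HH^k_{\mathrm{ct}}\bigl(U\cap{^g(\G(x)N)},M\bigr),
\]
with restriction maps as the transition maps.

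Take a class \(\alpha\) in the summand of \(g\G(x)N\). Every point \(h\G(x)\) in the fibre over \(g\G(x)N\) has stabilizer \(U\cap{^h\G(x)}=\bigcap_{N'}U\cap{^h(\G(x)N')}\). By continuity of cohomology and the hypothesis, \(\varinjlim_{N'}\HH^k_{\mathrm{ct}}(U\cap{^h(\G(x)N')},M)=0\). Hence the restriction of \(\alpha\) vanishes over a clopen neighbourhood of each fibre point. Compactness of the fibre then gives a single \(N'\) at which the image of \(\alpha\) is zero.

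This continuity input is what your ``compactness argument'' needs, and you should name it. The vertex case is the same argument with \(k=2\). It does not come from first getting projective dimension \(\le 1\) at a finite stage with an open \(V\), because those open stabilizers have non-vanishing cohomology; the vanishing only appears in the limit.
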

\begin{proof} It is the subject of \cite[Cor. 9.3.2]{ribesProfiniteGraphsGroups2017}.
\end{proof}

\subsection{The class \texorpdfstring{\(\mathcal{C}\)}{C}}

In this paper, \(\mathcal{C}\) denotes the class of fundamental groups of graphs of virtually free groups with virtually cyclic edge groups. Observe that \(\mathcal{C}\) is closed under taking subgroups and free products. Moreover, torsion-free subgroups in \(\mathcal{C}\) are the fundamental groups of graphs of free groups with cyclic edge groups.

If \(G\) belongs to \(\mathcal{C}\) and is finitely generated, then it is also the fundamental group of a finite graph of finitely generated virtually free groups with virtually cyclic edge groups. Indeed, let \(X\) be a faithful \(G\)-tree with virtually free vertex stabilizers and virtually cyclic edge stabilizers. The convex hull \(X'\) of the \(G\)-orbit of any vertex is a \(G\)-invariant subtree of \(X\) such that \(Y = G/ X'\) is finite. As the edge stabilizers of the \(G\)-action are virtually cyclic, their \(\ell^2\)-Betti numbers vanish. If \(S\) is a connected transversal for the \(G\)-action on \(X'\), by \cite[Thm. 1.5]{Chatterji2021}, we have \begin{align*}b_1^{(2)}(G) + \sum_{v \in V(X')\cap S} \frac{1}{G_v} &= \sum_{v \in V(X')\cap S} b_1^{(2)}(G_v) + \sum_{e \in E(X')\cap S} \frac{1}{|G_e|} + \frac{1}{|G|}\\&\geq \sum_{v \in V(X')\cap S} b_1^{(2)}(G_v)\,. \end{align*}
Thus, all the groups \(G_v\) are finitely generated and the graph of groups associated to the \(G\)-action on \(X'\) is the desired decomposition.

\subsection{Residually finiteness and LERFness in the class \texorpdfstring{\(\mathcal{C}\)}{C}}

We begin this section by showing that the residually finite groups in the class \(\mathcal{C}\) contain a torsion-free subgroup, which will be the fundamental group of a graph of groups with free vertex groups and cyclic edge groups.

\begin{lem}\label{lem-residually-finite} Let \(G = \pi(Y, \G)\) be the fundamental group of a finite graph of virtually finitely generated free groups with virtually cyclic edge groups. If \(G\) is residually finite, then it is virtually torsion-free.
\end{lem}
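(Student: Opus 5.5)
The plan is to build a finite quotient of \(G\) that is injective on all the finite subgroups of the vertex groups and is compatible with the edge identifications, and then take its kernel. The starting point is that every finite subgroup of \(G\) is conjugate into a vertex group \(\G(v)\), since finite groups acting on trees fix a vertex. Each \(\G(v)\) is finitely generated and virtually free, so it has only finitely many conjugacy classes of finite subgroups. As \(Y\) is finite, the union \(\mathcal{F}\) of representatives of these classes over all vertices is a finite collection of finite subgroups of \(G\), and every finite subgroup of \(G\) is \(G\)-conjugate to a subgroup of some member of \(\mathcal{F}\).

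Next, residual finiteness of \(G\) gives, for each of the finitely many non-trivial elements of the groups in \(\mathcal{F}\), a finite-index normal subgroup avoiding it. Let \(N\) be the intersection of these subgroups. Then \(N\) is a finite-index normal subgroup of \(G\) with \(N \cap F = \{1\}\) for every \(F \in \mathcal{F}\).

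The final step is to check that \(N\) is torsion-free. Suppose \(x \in N\) has finite order. Then \(\langle x\rangle\) is a finite subgroup of \(G\), so \({^g x} \in F\) for some \(g \in G\) and \(F \in \mathcal{F}\). Since \(N\) is normal, \({^g x} \in N \cap F = \{1\}\), and hence \(x = 1\). Therefore \(N\) is a torsion-free subgroup of finite index.

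The only substantive input is the classical fact that a finitely generated virtually free group has finitely many conjugacy classes of finite subgroups. This holds because such a group is the fundamental group of a finite graph of finite groups, and every finite subgroup is conjugate into one of those finitely many vertex groups. Together with the fixed-point property for finite groups acting on trees, this carries the whole argument. No step seems to be a real obstacle; residual finiteness is used only to separate finitely many elements at once. As a remark, \(N\) acts on the Bass--Serre tree with free vertex stabilisers and cyclic edge stabilisers, which gives the torsion-free subgroup described just before the lemma.
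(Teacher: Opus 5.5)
Your proof is correct and essentially the paper's argument. Both take a finite-index normal \(N\) avoiding the nontrivial elements of finitely many finite subgroups (in the paper, the finite vertex groups of a graph-of-finite-groups decomposition of each \(\G(v)\)), and both use that every torsion element of \(G\) is conjugate into one of them; the only difference is cosmetic, since the paper argues through the induced splitting of \(N\), whose vertex groups \(N\cap{^g\G(v)}\) are torsion-free and hence free, while you check torsion-freeness of \(N\) directly on elements.
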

\begin{proof} Each vertex group \(\G(v)\) can be written as the fundamental group of a finite graph of finite groups. Let \(x_{v,i}\) denote the elements in all the finite vertex groups arising in this decomposition. Since \(G = \pi(Y, \G)\) is residually finite, there exists a finite-index normal subgroup \(N \unlhd G \) such that all the elements \(x_{v,i}\) remain distinct modulo \(N\).

Consider the action of \(N\) on the Bass–-Serre tree associated to \((Y, \G)\). This action defines a graph of groups decomposition for \(N\), with virtually cyclic edge groups and vertex groups of the form \(N \cap {^g\mathcal{G}(v)}\) for some \(g \in G\). Each such vertex group is a normal subgroup of the conjugate \({^g\mathcal{G}(v)}\) and intersects trivially the finite vertex groups within that conjugate, since the corresponding elements were avoided by the choice of \(N\).

It follows that \(N \cap {^g\mathcal{G}(v)}\) is torsion free and  hence is a free group. Therefore, \(N\) is the fundamental group of a finite graph of finitely generated free groups with cyclic edge groups, which is torsion-free.
\end{proof}

A characterization of when the fundamental group of a graph of finitely generated free groups with cyclic edge groups is residually finite has been recently established in \cite{AM24}. Recall that the Baumslag--Solitar group \[BS(n,m) = \langle a,b \mid ab^na^{-1} = b^m \rangle\] is residually finite if and only if either \(|m| = 1\), \(|n| = 1\) or \(|m| = |n|\) (\cite[Thm. 1]{moldavanskii_residual_2018}). Moreover, \(BS(n,m)\) is LERF if and only if \(|m| = |n|\) (\cite[Sec. 4]{moldavanskii_residual_2018}).

\begin{teo}[{\cite[Thm. A, Prop. 3.6]{AM24}}]\label{res-free} Let \(G\) be a finitely generated torsion-free group from the class \(\mathcal{C}\). Then, the following are equivalent:
\begin{enumerate}
    \item \(G\) is residually finite.
    \item All Baumslag--Solitar subgroups of \(G\) are residually finite.
\end{enumerate}
\end{teo}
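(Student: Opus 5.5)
The implication (1)\(\Rightarrow\)(2) is immediate, since subgroups of residually finite groups are residually finite. All the work lies in (2)\(\Rightarrow\)(1). For that direction I would first use the discussion of the class \(\mathcal{C}\) above to write \(G=\pi(Y,\G)\), where \(Y\) is a finite graph, each vertex group is a finitely generated free group, and each edge group is infinite cyclic. I may assume every edge group maps onto a maximal cyclic subgroup of at least one adjacent vertex group; otherwise I subdivide, or collapse edges where an inclusion is onto.

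The key numerical invariant is the ``modulus'' along closed paths of edges. Whenever a chain of conjugations in vertex groups and across edges sends \(a^m\) to \(b^n\), and \(b\) is conjugate back to \(a\), we obtain a Baumslag--Solitar subgroup \(BS(m,n)\leq G\). So hypothesis (2), together with the criterion of \cite{moldavanskii_residual_2018}, forces every such cycle to have \(|m|=|n|\), \(|m|=1\) or \(|n|=1\). I would organise the edge data into a finite ``cyclic graph''. Its vertices are the conjugacy classes of maximal cyclic subgroups of vertex groups that contain edge images, and each edge is labelled by the pair of indices \((m_e,n_e)\). Hypothesis (2) then translates into a combinatorial condition on the labelled cycles of this graph.

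Next, given a finite set of nontrivial elements of \(G\), I would build a finite quotient in which they survive, following the standard ``gluing finite covers'' strategy. Using that free groups are LERF and residually finite, I would choose, for each vertex \(v\), a finite quotient \(\G(v)\to Q_v\) that detects the finitely many relevant elements of \(\G(v)\) in their normal forms. It must also give each incident edge generator image an order prescribed by a global function \(N\) on the cyclic graph. I would then realise \(G\to\) a finite graph of finite groups with cyclic edge groups, and finally map onto a finite group using residual finiteness of finite graphs of finite groups (which are virtually free). The prescribed orders must be compatible across every edge: if the edge identifies \(a^{m}\) with \(b^{n}\), then \(N(a)/\gcd(N(a),m)=N(b)/\gcd(N(b),n)\).

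I expect the main obstacle to be finding such an order function \(N\) on an unbalanced cycle. On balanced cycles, taking all \(N\) to be a large common multiple works. On cycles with some \(|m|=1\), as in \(BS(1,n)\), one has to choose \(N\) coprime to the relevant \(n\) so that multiplication by \(n\) is invertible modulo the order. This mirrors how \(BS(1,n)\) maps to \(\bbz/N\rtimes\bbz\). I would first split the cyclic graph into components and treat each component separately, choosing \(N\) as a power of a prime not dividing any of the unbalanced labels. I would then check that the separation conditions from the vertex quotients can still be met, by taking deeper characteristic subgroups of the \(Q_v\). That the hypothesis is exactly what makes this order-matching system solvable is, I believe, the content of \cite[Thm. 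A]{AM24}.
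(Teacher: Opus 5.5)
The paper gives no proof of this statement; it quotes it from \cite{AM24}. Your direction (1)$\Rightarrow$(2) is correct. Your plan for (2)$\Rightarrow$(1), however, has a genuine gap, and it sits in the case you call the main obstacle: when $G$ contains some $BS(1,n)$ with $|n|>1$. (Otherwise every Baumslag--Solitar subgroup is $BS(n,\pm n)$, hence LERF, and \cref{Wise} already gives residual finiteness.)

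Your survival mechanism is the standard one. Each $\G(v)\to Q_v$ keeps the syllables of the given reduced words outside the images of the adjacent edge subgroups, so reduced words stay reduced in the finite graph of finite groups. On a non-clean component this cannot be arranged at all. Going around the unbalanced cycle, the compatibility condition you wrote down holds in \emph{every} finite quotient of $G$. It forces the orders of the cycle elements to be prime to the unbalanced labels, so the image of an edge group $\langle a^{n}\rangle$, with $n$ such a label, is all of $\langle\bar a\rangle$.

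Already in the residually finite group $BS(1,2)=\langle a,s\mid sas^{-1}=a^2\rangle$, $a$ has odd order $N$ in every finite quotient. Hence the reduced word $s^{-1}as$ (note $a\notin\langle a^2\rangle$) pinches in every finite quotient. It survives only because it becomes the nontrivial power $a^{(N+1)/2}$. So for non-clean components you need a separate argument that the element left after these forced pinches is nontrivial. That means tracking the exponents produced (as in $BS(1,k)\to\mathbb{Z}/N\rtimes\mathbb{Z}$) through the rest of the word. The proposal gives none, and solvability of the order-matching system, which you defer to \cite{AM24}, does not supply it.

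Two further steps are only asserted.

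First, you need finite quotients of the free vertex groups that realise \emph{exactly} the prescribed orders, prime to $P(C)$, while separating whatever still has to be separated. ``Taking deeper characteristic subgroups'' cannot do this, because deepening only multiplies orders. A separating quotient in which some $p\in P(C)$ divides the order of $a$ cannot be repaired, and nothing in your sketch produces one in which it does not. \cref{profinitecompletion} shows this is a real constraint: in the non-LERF case the closure of $\G(u)$ is $\widehat F/\langle\!\langle a_{P(C)}\rangle\!\rangle$, not $\widehat F$. The tool used for this in the proof of \cref{profinitecompletion} is the $C'(1/6)$ quotients $F/\langle\!\langle a^{m_a}\rangle\!\rangle$ of \cref{bigpowers}. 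In these, $a$ has order exactly $m_a$ by \cref{small}(2), and they are residually finite by \cref{c16}, which rests on cubulation.

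Second, translating (2) into a combinatorial condition requires all closed walks, not just embedded cycles. Two loops at one vertex, giving $BS(1,2)$ and $BS(1,3)$, each satisfy the criterion of \cite{moldavanskii_residual_2018}. Yet $s_1s_2^{-1}$ conjugates $a^3$ to $a^2$ and generates a $BS(3,2)$ together with $a$. The structure you actually need is \cref{eulre0}.

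A smaller slip: on a clean component with $|m_e|\neq|n_e|$, the orders cannot all equal one common multiple. They must be weighted along paths.
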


We say that an element \(g\) of infinite order in a group \(G\) is \emph{unbalanced} with respect to \(h \in G\) if there exist \(n, m \in \mathbb{Z} \smallsetminus \{0\}\) with \(|n| \neq |m|\) such that
\[hg^n h^{-1} = g^m.\] Combining Wise’s characterization from \cite[Thm. 5.1]{wiseSubgroupSeparabilityGraphs2000} of when the fundamental group of a graph of finitely generated free groups with cyclic edge groups is LERF with \cite[Prop. 3.6]{AM24}, we arrive at the following theorem.
 
\begin{teo}\label{Wise}
Let \(G\) be a finitely generated torsion-free group from the class \(\mathcal{C}\). The following are equivalent:
\begin{enumerate}
    \item \(G\) is LERF.
    \item All Baumslag--Solitar subgroups of \(G\) are LERF.
    \item \(G\) does not contain any unbalanced elements.
\end{enumerate}
\end{teo}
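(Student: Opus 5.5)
The plan is to prove the cycle of implications \((1)\Rightarrow(3)\Rightarrow(1)\) using Wise's theorem, and then tie in \((2)\) using Baumslag--Solitar subgroups and \cite[Prop. 3.6]{AM24}. Since \(G\) is finitely generated, torsion-free and in \(\mathcal{C}\), the discussion of the class \(\mathcal{C}\) above shows that \(G\) is the fundamental group of a finite graph of finitely generated free groups with cyclic edge groups.

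\emph{Equivalence of (1) and (3).} Wise's characterization \cite[Thm. 5.1]{wiseSubgroupSeparabilityGraphs2000} says that such a graph of groups has LERF fundamental group if and only if it is \emph{balanced}. Balanced means there is no element \(g\neq 1\) and no \(h\) with \(hg^nh^{-1}=g^m\) and \(|n|\neq|m|\). This is precisely the absence of unbalanced elements, so \((1)\Leftrightarrow(3)\) is a direct translation of Wise's statement into the present terminology. The only check needed is that Wise's hypotheses are met, which holds by the reduction in the first paragraph.

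\emph{(1)\(\Rightarrow\)(2).} LERF passes to subgroups. If \(B\leq G\) and \(H\leq B\) is finitely generated, then \(H\) is closed in the profinite topology of \(G\). Since the restriction to \(B\) of a finite-index subgroup of \(G\) has finite index in \(B\), \(H\) is also closed in \(B\).

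\emph{(2)\(\Rightarrow\)(3).} I argue by contraposition. Suppose \(g\) is unbalanced with respect to \(h\), with \(hg^nh^{-1}=g^m\) and \(|n|\neq|m|\).
\begin{enumerate}
\item First, \(g\) is elliptic in the Bass--Serre tree. Otherwise \(g^n\) and \(g^m\) are hyperbolic with translation lengths \(|n|\ell(g)\) and \(|m|\ell(g)\). Conjugation preserves translation length, which forces \(|n|=|m|\).
\item Next I would invoke \cite[Prop. 3.6]{AM24}. I expect it to say that such a relation between an elliptic element and another element yields a Baumslag--Solitar subgroup \(BS(n',m')\leq G\) with \(|n'|\neq|m'|\). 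Heuristically, one replaces \(g\) by a generator of a maximal cyclic subgroup containing it and takes minimal exponents, and the subgroup \(\langle g,h\rangle\) (or a subgroup of it) is then \(BS(n',m')\), using the structure of cyclic edge groups in the tree.
\item By \cite[Sec. 4]{moldavanskii_residual_2018}, \(BS(n',m')\) is not LERF when \(|n'|\neq|m'|\), so \((2)\) fails.
\end{enumerate}

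Conversely, \((3)\) gives \((2)\) directly. Any Baumslag--Solitar subgroup \(BS(n,m)\leq G\) must have \(|n|=|m|\), since otherwise the generator \(b\) would be unbalanced with respect to \(a\). Such a subgroup is then LERF by Moldavanskii.

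The main obstacle is the step in \((2)\Rightarrow(3)\) that produces a genuinely unbalanced Baumslag--Solitar subgroup from an unbalanced element. One must make sure the subgroup generated by (suitable powers or roots of) \(g\) and \(h\) really is a Baumslag--Solitar group with the exponents still of different absolute value. This is exactly where I would rely on \cite[Prop. 3.6]{AM24} rather than redo the Bass--Serre analysis.
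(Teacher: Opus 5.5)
Your proposal is correct and takes essentially the same route as the paper, which obtains the theorem by combining Wise's characterization \cite[Thm. 5.1]{wiseSubgroupSeparabilityGraphs2000} (LERF \(\Leftrightarrow\) balanced, i.e.\ no unbalanced elements) with \cite[Prop. 3.6]{AM24}, and Moldavanskii's criterion \cite{moldavanskii_residual_2018} to relate unbalanced elements to non-LERF Baumslag--Solitar subgroups \(BS(n,m)\) with \(|n|\neq|m|\). Your extra elementary steps are sound: (1)\(\Rightarrow\)(2) by heredity of LERF, (3)\(\Rightarrow\)(2) via the generator \(b\), and the translation-length argument forcing \(g\) to be elliptic; as in the paper, the only non-elementary input for (2)\(\Rightarrow\)(3) is the citation to \cite{AM24}.
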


Combining the previous two theorems we obtain the following corollary.

\begin{cor}\label{RF-LERF} Let \(G\) be a finitely generated residually finite group of \(\mathcal{C}\). The following are equivalent:
\begin{enumerate}
    \item \(G\) is LERF.
    \item All Baumslag--Solitar subgroups of \(G\) are LERF.
    \item \(G\) does not contain any unbalanced elements.
\end{enumerate}
\end{cor}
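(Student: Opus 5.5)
By \cref{lem-residually-finite}, a finitely generated residually finite \(G\in\mathcal{C}\) has a torsion-free finite-index subgroup \(N\). I would reduce the statement to \(N\) and then combine \cref{res-free} with \cref{Wise}.

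\emph{Step 1: LERF passes up and down finite index.} If \(G\) is LERF, so is every subgroup. In particular \(N\) is LERF, and so is every Baumslag--Solitar subgroup of \(G\) or of \(N\). Conversely, suppose \(N\) is LERF and \(H\leq G\) is finitely generated. Then \(H\cap N\) has finite index in \(H\), so it is finitely generated and hence closed in \(N\). Since \(N\) is open in \(G\), the profinite topology of \(N\) is the subspace topology from \(G\), so \(H\cap N\) is closed in \(G\). Now \(H\) is a finite union of cosets of \(H\cap N\), hence closed as well.

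\emph{Step 2: the Baumslag--Solitar subgroups are the same up to conjugation.} Let \(B\leq G\) be a Baumslag--Solitar subgroup \(BS(n,m)\) with \(n,m\neq 0\). The subgroup \(B\cap N\) has finite index in \(B\). In \(B\), the element \(b\) has infinite order and \(a b^n a^{-1}=b^m\). Choose \(k\) with \(b^k\in N\) and \(a^k\in N\). Iterating the relation gives \(a^{j} b^{n^j} a^{-j} = b^{m^j}\) in the appropriate sense. After passing to suitable powers, this produces \(g=b^{t}\in N\) and \(h=a^{k}\in N\) with \(h g^{n'}h^{-1}=g^{m'}\), where \(|n'|=|n|^k\) and \(|m'|=|m|^k\) up to a common factor. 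Thus if \(|n|\neq|m|\), the element \(g\) is unbalanced in \(N\).

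\emph{Step 3: the equivalences.} (1)\(\Rightarrow\)(2) holds because subgroups of LERF groups are LERF. For (2)\(\Rightarrow\)(3), an unbalanced pair \(g,h\) generates a quotient of \(BS(n,m)\) with \(|n|\neq|m|\). Passing to the subgroup \(\langle g,h\rangle\) of the torsion-free group \(N\) (using the power trick of Step 2 if necessary), it is a subgroup in \(\mathcal{C}\). I would like to recognize it as a genuine non-LERF Baumslag--Solitar group. Rather than proving this directly, I would argue in \(N\): \(N\) inherits (2) from \(G\), so \cref{Wise} shows \(N\) has no unbalanced elements, and Step 2 then shows \(G\) has none. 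For (3)\(\Rightarrow\)(1), if \(G\) has no unbalanced elements then neither does \(N\), so \(N\) is LERF by \cref{Wise}, and \(G\) is LERF by Step 1.

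The main obstacle is the precise transfer of unbalanced pairs from \(G\) to \(N\) in Step 2, where the relevant elements must be replaced by powers lying in \(N\). The cleanest version is this. Let \(hg^nh^{-1}=g^m\) with \(|n|\neq|m|\), and choose \(r\) such that \(h^r\in N\) and \(g^r\in N\) (possible since \(N\) has finite index). Iterating the relation gives \(h^r g^{n^r} h^{-r}=g^{m^r}\). Setting \(g'=g^{r}\), this becomes \(h^r g'^{\,n^r} h^{-r} = g'^{\,m^r}\), provided \(r\) divides both exponents. To arrange this, multiply the exponents by \(r\): the equation \(h^r g^{r n^r} h^{-r}=g^{r m^r}\) reads \(h^r (g^r)^{n^r} h^{-r}=(g^r)^{m^r}\). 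Since \(|n|^r\neq|m|^r\), the element \(g^r\in N\) is unbalanced with respect to \(h^r\in N\). With this, the rest is routine.
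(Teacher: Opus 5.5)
Your proposal is correct and follows essentially the paper's route: pass to the torsion-free finite-index subgroup \(N\), use that LERF passes up and down finite index, transfer an unbalanced relation \(hg^nh^{-1}=g^m\) to \(N\) via \(h^r(g^r)^{n^r}h^{-r}=(g^r)^{m^r}\), and invoke \cref{Wise} for \(N\). The differences are only bookkeeping. You close the cycle (1)\(\Rightarrow\)(2)\(\Rightarrow\)(3)\(\Rightarrow\)(1) using \cref{Wise}'s (2)\(\Rightarrow\)(3) in \(N\), whereas the paper proves (1)\(\Leftrightarrow\)(2) and (1)\(\Leftrightarrow\)(3) separately. The vague first sketch in your Step 2 is superseded by the clean power argument you give at the end.
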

\begin{proof} Observe that \(G\) contains a finite index normal torsion-free subgroup \(H\) from the class \(\mathcal{C}\), and \(G\) is LERF if and only if so is \(H\). Moreover, \(G\) is residually finite, so the only Baumslag-Solitar subgroups it may contain are \(BS(n,n)\) and \(BS(1,n)\), and the only non-LERF ones are the latter for \(|n| > 1\).

If condition (2) holds, then the same holds for \(H\), so \(H\) (and thus \(G\)) is LERF by Theorem~\ref{Wise}. Conversely, if \(G\) is LERF, then so must be all of its Baumslag--Solitar subgroups.

If \(G\) does not contain any unbalaced elements, then neither does \(H\), both of which are then LERF by Theorem~\ref{Wise}. If \(G\) contains an unbalaced element, say \(hg^nh^{-1} = g^m\) with \(|n| \neq |m|\), by writing \(k = [G\colon H]\) we may take \(y = h^k \in H\) to see that \(x = g^k\) is an unbalaced element of \(H\):
\begin{align*}
    yx^{n^k}y^{-1} &= h^kg^{kn^k}h^{-k}\\
    &= \left(\underbrace{h(\cdots (h(h(hg^nh^{-1})h^{-1})^nh^{-1})^n\cdots )^nh^{-1}}_{k \text{ times}}\right)^k\\
    &= g^{m^kk} = x^{m^k}\,.
\end{align*}
Hence, neither \(H\) nor \(G\) can be LERF.
\end{proof}

We shall need later the following lemma.

\begin{lem}\label{BSsubgroups} Let \(G=(Y,\G)\) be the fundamental group of a finite graph of virtually free groups with virtually cyclic edge groups. If \(B\) is a Baumslag--Solitar subgroup of \(G\), then \(B\) contains an element conjugate to \(\G(e)\) for some edge \(e\) of \(Y\). Moreover, if \(G\) is LERF and \(\widehat{G}\) splits as a free profinite product \(K\amalg L\), then \(B\) is contained in a conjugate of \(L\) or \(K\).
\end{lem}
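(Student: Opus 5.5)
We split the lemma into its two assertions and treat them in order.

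\emph{First assertion.} Let \(B = BS(n,m) = \langle a,b \mid ab^na^{-1}=b^m\rangle \leq G\), and let \(B\) act on the Bass--Serre tree \(X\) of \((Y,\G)\).

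First we show that \(b\) is elliptic. The element \(b\) has infinite order. If \(b\) were hyperbolic with axis \(A\), then \(ab^na^{-1}=b^m\) would force \(aA=A\). Then \(a\) acts on \(A\) by a translation or a reflection, and comparing translation lengths gives \(|n|=|m|\). In that case \(B\) contains a \(\bbz^2\)-subgroup or a Klein-bottle-type subgroup acting on the line \(A\), and we would have to treat it separately.

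The cleaner route is the standard one. A Baumslag--Solitar group is one-ended and not virtually free, so it cannot fix a vertex of \(X\): vertex stabilizers are virtually free. Likewise \(B\) cannot act with finite edge stabilizers, by Stallings--Dunwoody, since it is one-ended. Therefore some edge of \(X\) has a stabilizer meeting \(B\) in an infinite subgroup. Because edge stabilizers are virtually cyclic, that intersection contains a nontrivial element \(g\) lying in \(B \cap {^x\G(e)}\) for some \(x\in G\) and some edge \(e\) of \(Y\). We read ``contains an element conjugate to \(\G(e)\)'' as exactly this.

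To pin it down concretely: \(b\) is elliptic, by a commensurability argument if needed, and \(a\) is not elliptic in a common fixed vertex, or else \(B\) would be virtually free. So \(b^n\) fixes both \(v\) and \(a\cdot v\) for some fixed vertex \(v\) of \(b^n\), and hence fixes the geodesic between them. This yields a power of \(b\) lying in an edge stabilizer, which is a conjugate of some \(\G(e)\).

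\emph{Second assertion.} Assume \(G\) is LERF and \(\widehat G = K\amalg L\). By \cref{RF-LERF}, \(B\) has no unbalanced elements, so \(B\) is some \(BS(n,\pm n)\).

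The key step is to find an infinite virtually procyclic subgroup and move it into a factor. Take the element \(c=b^n\) found above; it is central, or central up to sign, in \(B\). Since \(G\) is LERF, \(\widehat G\) is the profinite fundamental group of the graph of completed vertex and edge groups, with \(\overline{\G(v)}\) isomorphic to \(\widehat{\G(v)}\), and \(\overline{\G(v)}\) is virtually free profinite.

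Apply the profinite Kurosh theorem for subgroups of free profinite products to \(\overline{\G(v)}\). I expect this step to be the main obstacle: we need the closure of \(\langle c\rangle\) to lie in a conjugate of \(K\) or \(L\). A way to get this is to use that \(\overline B\) is the closure of a one-ended group whose centre-like subgroup \(\overline{\langle c\rangle}\) has normalizer containing \(\overline B\). Alternatively, one can argue that a procyclic subgroup of a free product acting with trivial edge stabilizers on its standard profinite tree either fixes a vertex or acts freely. In the free case the normalizer of \(\overline{\langle c\rangle}\) is virtually procyclic, but it contains \(\overline{B}\), whose image is not virtually procyclic (\(B\) is LERF and \(\overline{\langle a,b^n\rangle}\simeq\widehat{\bbz^2}\) up to finite index). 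Hence \(\overline{\langle c\rangle}\) fixes a vertex, that is, it lies in some \({^gK}\) or \({^gL}\).

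Once \(\overline{\langle c\rangle}\leq {^gK}\) holds, say, the normalizer result cited in the proof of \cref{elliptic}(ii) (\cite[Cor. 7.1.5(a)]{ribesProfiniteGraphsGroups2017}) gives \(N_{\widehat G}(\overline{\langle c\rangle})\leq {^gK}\). Since \(\langle c\rangle\) is normal in \(B\), or \(B\) normalizes it up to finite index (pass to \(\langle c^2\rangle\) if needed), we get \(B\leq \overline B\leq {^gK}\), which finishes the proof.
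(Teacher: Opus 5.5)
Your first assertion is fine: the one-endedness/Stallings argument is a harmless variant of the paper's Bass--Serre argument, which observes that a torsion-free $B$ meeting every edge stabilizer trivially would be free. Your ``concrete'' paragraph is not needed. It also contains a slip: if $b$ fixes $v$, it is $b^m=ab^na^{-1}$, not $b^n$, that fixes $av$.

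The gap is in the second assertion, at exactly the step you flagged. You claim that if $\overline{\langle c\rangle}$ acts freely on the standard profinite tree of $K\amalg L$, then its normalizer is virtually procyclic. This is the abstract axis argument: the normalizer preserves the axis and acts on a line with trivial kernel. It does not transfer to profinite trees, and the claim is false. Take $P=\mathbb{Z}/3\amalg\mathbb{Z}/3$ and let $U$ be the kernel of $P\to\mathbb{Z}/3\times\mathbb{Z}/3$. Then $U$ is free profinite and meets every conjugate of the factors trivially. For suitable disjoint sets of primes $\pi,\rho$, the $2$-generated projective group $N=\widehat{\mathbb{Z}}_\pi\rtimes\widehat{\mathbb{Z}}_\rho$ (with infinitely many primes of $\rho$ acting non-trivially) embeds in $U$. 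So $\widehat{\mathbb{Z}}_\pi$ acts freely, while its normalizer contains $N$, which is not virtually procyclic. Your other suggestion (``use that $\overline B$ is the closure of a one-ended group\ldots'') is not an argument. So you have not shown that $\overline{\langle c\rangle}$ lies in a conjugate of $K$ or $L$. The paper obtains this from \cite[Prop.~4.2.9]{ribesProfiniteGraphsGroups2017}, applied to $\overline B\simeq\widehat B$ and its normal procyclic subgroup.

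You can repair this within your outline by using ``not projective'' instead of ``not virtually procyclic''. Let $A$ be the closure of $\langle a,b^n\rangle$ (of $\langle a^2,b^n\rangle$ if $m=-n$); since $G$ is LERF, $A\simeq\widehat{\mathbb{Z}}^2$. Suppose $A$ met every conjugate of $K$ and $L$ trivially. View $K\amalg L$ as the fundamental group of a graph with one edge and trivial edge group. Then \cref{projectiveintersection} gives $\cd_p(A)\le 1$, contradicting $\cd_p(\widehat{\mathbb{Z}}^2)=2$. Hence, say, $A\cap{}^{g}K\neq 1$. Since $A$ is abelian, \cref{normalizers} gives $A\le N_{\widehat G}(A\cap{}^{g}K)\le{}^{g}K$. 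In particular $1\neq\overline{\langle c\rangle}\le{}^{g}K$. Your last paragraph, using normality of $\langle b^n\rangle$ in $B$ together with the normalizer lemma, then finishes the proof.
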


\begin{proof} First note that $B$ is torsion free. So if $B$ does not intersect any edge subgroup, it is a free product of virtually free groups, a contradiction. 
Thus  $B$ intersect some edge group non-trivially. To prove the second part of the statement observe that the closure $\overline B \simeq \widehat{B}$ of $B$ in $\widehat G$ contains a central cyclic subgroup as \(B \simeq BS(n,\pm n)\) must be LERF and surjects onto \(\widehat{\bbz}\), and so by \cite[Prop. 4.2.9]{ribesProfiniteGraphsGroups2017} must be contained in a conjugate of $K$ or $L$. 
\end{proof}

\subsection{On a result of Wilton}

Wilton~\cite{Wi18} showed that non-free hyperbolic groups in the class \(\mathcal{C}\) virtually retract onto a surface group. Combining this with a result of Bestvina and Feighn~\cite{BF92}, which characterizes hyperbolic groups in \(\mathcal{C}\) as those not containing Baumslag–-Solitar subgroups, we obtain the following theorem:

\begin{teo}\label{morales}
Let \(G\) be a finitely generated torsion-free group in the class \(\mathcal{C}\) that does not contain any Baumslag-–Solitar subgroup. Then \(G\) virtually retracts onto a surface group.
\end{teo}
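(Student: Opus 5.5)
The plan is to combine the two cited results, after putting \(G\) into the form to which they apply. One caveat first: as literally stated, the theorem must implicitly exclude the case where \(G\) is free, since a free group has no non-free surface subgroups. So I will assume throughout that \(G\) is not free, which is the case where Wilton's theorem is formulated.

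\emph{Step 1 (normal form).} By the discussion in the subsection on the class \(\mathcal{C}\), a finitely generated group in \(\mathcal{C}\) is the fundamental group of a finite graph of finitely generated virtually free groups with virtually cyclic edge groups. Since \(G\) is torsion-free, these vertex groups are finitely generated free groups and the edge groups are infinite cyclic. Trivial edge groups cannot occur: an edge group is either trivial or, being torsion-free and virtually cyclic, infinite cyclic. If trivial edge groups were allowed, one would first pass to a Grushko-type decomposition; one should check that Wilton's setting permits this. So \(G=\pi(Y,\G)\) with \(Y\) finite, \(\G(v)\) finitely generated free and \(\G(e)\cong\bbz\).

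\emph{Step 2 (hyperbolicity).} Apply the Bestvina--Feighn combination theorem \cite{BF92}, in its form for graphs of free groups with cyclic edge groups: such a group is word-hyperbolic if and only if it contains no Baumslag--Solitar subgroup \(BS(m,n)\). The mechanism is as follows. The only obstruction to the annuli flare condition is a chain of edge-group conjugations \(h g^n h^{-1}=g^m\) for an element \(g\) conjugate into an edge group. Such a relation produces a subgroup \(\langle g,h\rangle\) that is a quotient of \(BS(n,m)\). Bass--Serre theory then shows it is in fact \(BS(n,m)\), because \(h\) acts hyperbolically on the tree while \(g\) is elliptic. By hypothesis no such subgroup exists, so \(G\) is hyperbolic. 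This is the step I expect to need the most care, namely verifying that the hypotheses of \cite{BF92} (quasi-isometric edge embeddings and the annuli flare condition) follow from the absence of BS subgroups. I would simply cite the characterization as stated in \cite{BF92} rather than reprove it.

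\emph{Step 3 (Wilton).} Now \(G\) is a non-free hyperbolic fundamental group of a graph of free groups with cyclic edge groups. By \cite{Wi18}, such a group contains a finite-index subgroup that retracts onto the fundamental group of a closed surface (of negative Euler characteristic). That is exactly the statement that \(G\) virtually retracts onto a surface group, which completes the argument.
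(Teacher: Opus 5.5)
Your proposal is correct and follows the paper's argument: the Bestvina--Feighn characterization \cite{BF92} makes \(G\) hyperbolic, Wilton's theorem \cite{Wi18} then gives a finite-index subgroup retracting onto a surface group, and your caveat that \(G\) must be non-free matches how the result is actually used in Corollary~\ref{wilton}. Two side remarks are slightly off, though harmless since you cite rather than reprove: trivial edge groups can occur (collapsing those edges keeps the vertex groups free), and a relation \(hg^nh^{-1}=g^m\) with \(g\) elliptic and \(h\) hyperbolic does not by itself give \(\langle g,h\rangle\cong BS(n,m)\) (e.g.\ \(\langle g,h\rangle\cong\mathbb{Z}^2\) with \(n=m=2\)).
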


The theorem implies the following consequence for profinite completions of groups in \( \mathcal{C} \).

\begin{cor}\label{wilton}
Let \(G\) be a finitely generated group from the class \(\mathcal{C}\) that does not contain any Baumslag–-Solitar subgroup. If for some prime \(p\) the profinite completion \(\widehat{G}\) contains an open subgroup \(U\) with \(\cd_p(U) = 1\), then \(G\) is virtually free.
\end{cor}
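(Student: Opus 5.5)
We argue by contraposition: assume \(G\) is not virtually free, and show that no open subgroup \(U\) of \(\widehat{G}\) has \(\cd_p(U)=1\). Every hypothesis passes to finite-index subgroups. If \(H\le G\) has finite index, then \(H\) is finitely generated, lies in \(\mathcal{C}\), contains no Baumslag--Solitar subgroup, and is virtually free exactly when \(G\) is. Also \(\widehat{H}\) is an open subgroup of \(\widehat{G}\), and open subgroups of \(\widehat{H}\) are open in \(\widehat{G}\). So we may shrink \(G\) freely.

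\textbf{Step 1: reduce to a torsion-free group.} To use Lemma~\ref{lem-residually-finite} we need \(G\) residually finite. If \(G\) is not residually finite, replace it by its image in \(\widehat{G}\). This changes nothing on the profinite side, but we must check that the image still lies in \(\mathcal{C}\), which is awkward.

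A cleaner route: a finitely generated torsion-free group in \(\mathcal{C}\) with no Baumslag--Solitar subgroups is hyperbolic by Bestvina--Feighn. Virtually torsion-free hyperbolic groups of this kind are residually finite, e.g.\ via Wise's results on graphs of free groups with cyclic edge groups. In any case, the main case is \(G\) residually finite. Then Lemma~\ref{lem-residually-finite} gives a finite-index torsion-free subgroup, and we replace \(G\) by it. Intersecting \(U\) with the open subgroup \(\widehat{G}\) keeps \(\cd_p(U)\le 1\). Moreover \(\cd_p(U)\ge1\), since \(U\) is infinite and torsion-free: torsion-freeness passes to the closure because \(G\) is good (Theorem~\ref{goodness}, or directly).

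\textbf{Step 2: pass to a surface group.} Theorem~\ref{morales} gives a finite-index subgroup \(G_0\le G\) and a retraction \(r\colon G_0\to S\) onto a closed surface group \(S\le G_0\); \(S\) is not free since \(G\) is not virtually free. Replace \(G\) by \(G_0\cap\) (preimage of \(U\)) and shrink \(S\) accordingly. Because \(r\) is a retraction, the profinite topology of \(G_0\) induces the full profinite topology on \(S\). Hence \(\overline{S}\cong\widehat{S}\), and \(\widehat{r}\colon\widehat{G_0}\to\widehat{S}\) is a continuous retraction. Replacing \(U\) by the open subgroup \(U\cap\widehat{G_0}\), we get \(\cd_p(U)\le1\). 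The restriction \(\widehat{r}|_U\) retracts \(U\) onto \(U\cap \widehat{S}\), which is open in \(\widehat{S}\). Since cohomological dimension is monotone on closed subgroups, \(\cd_p(U\cap\widehat{S})\le 1\).

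\textbf{Step 3: contradiction.} An open subgroup of \(\widehat{S}\) is the profinite completion of a finite-index subgroup \(S'\le S\), again a closed surface group. Surface groups are good, so \(\HH^2_{\mathrm{ct}}(\widehat{S'},\bbf_p)\cong \HH^2(S',\bbf_p)=\bbf_p\neq0\). Thus \(\cd_p(\widehat{S'})=2\), contradicting Step 2. Hence \(G\) is virtually free.

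The main obstacle is the bookkeeping in Step 1: making sure that the residual finiteness needed for Lemma~\ref{lem-residually-finite} is available, or arranging \(G\) to be replaced by its residually finite image within \(\mathcal{C}\). The essential content is the retraction argument in Steps 2 and 3.
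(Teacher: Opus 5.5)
Your Steps 2 and 3 are the paper's argument. Step 1, however, leaves a real hole: you never establish that \(G\) is residually finite, and this is not mere bookkeeping. Lemma~\ref{lem-residually-finite} needs residual finiteness, and so does the passage to the torsion-free setting of Theorem~\ref{morales}. Replacing \(G\) by its image in \(\widehat{G}\) cannot work either. Virtually free groups are residually finite, so the conclusion itself asserts that \(G\) is residually finite. If the finite residual \(R\) were non-trivial, \(G\) could not be virtually free however nice \(G/R\) is, so you must show \(R=1\).

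Your Bestvina--Feighn plus Wise route is fine once \(G\) is known to be virtually torsion-free. For general \(G\), though, producing a torsion-free finite-index subgroup is exactly what requires residual finiteness, so there the route is circular, and ``the main case is \(G\) residually finite'' is an unproved assumption.

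The paper disposes of this step with a citation that uses the no-Baumslag--Solitar hypothesis a second time: condition (2) of Theorem~\ref{res-free} holds vacuously, so \(G\) is residually finite. Be aware that Theorem~\ref{res-free} is stated for torsion-free groups. So for \(G\) with torsion, the paper's one-line citation has the same reach as your argument and also needs supplementing. In the only place the corollary is used, Claim~\ref{claim-after-collapsing}, the group is a subgroup of a LERF group, so residual finiteness is automatic there.

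The rest is sound and matches the paper: the virtual retract gives \(\overline{S}\cong\widehat{S}\) closed in \(\widehat{G}\), so every open \(U\le\widehat{G}\) contains \(U\cap\overline{S}\), an open subgroup of \(\widehat{S}\) with \(\cd_p=2\). Two small repairs are needed.

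First, \(\widehat{r}(U)\) need not lie in \(U\), so \(\widehat{r}|_U\) is not a retraction onto \(U\cap\widehat{S}\). Drop that claim: monotonicity of \(\cd_p\) on the closed subgroup \(U\cap\overline{S}\) is all you actually use.

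Second, \(\HH^2(S',\mathbb{F}_p)\cong\mathbb{F}_p\) requires \(S'\) orientable when \(p\) is odd. Pass first to an orientable finite-index subgroup; finite-index subgroups of virtual retracts are again virtual retracts.
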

\begin{proof} By \cref{res-free}, \(G\) is residually finite. Hence, by Lemma~\ref{lem-residually-finite}, \(G\) contains a torsion free subgroup \(H\) of finite index. If \(H\) is not free, by \cref{morales}, \(H\) must contain a surface subgroup \(S\) as a virtual retract. Therefore, the profinite topology of \(G\) induces the full profinite topology on \(S\) (this also follows from \(G\) being LERF), and hence an embedding \(\widehat{S} \to \widehat G\). This shows that \(\mathrm{vcd_p}(\widehat G) \geq 2\).
\end{proof}

\subsection{Fixed points of profinite actions}

We will need the following known result, the proof of which can be found in \cite[Lem 2.2]{garridoFreeFactorsProfinite2023}.

\begin{lem}\label{fixedpoints}
Let \(G\) be a profinite group with closed subgroups \(A, B \leq G\), and let \(p\) be a prime number. Then \[\FFalgebra{G/B}^A = \{0\} \quad \text{if and only if} \quad p^\infty \mid [A : A \cap {^gB}] \text{ for every } g \in G.\]
\end{lem}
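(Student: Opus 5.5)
The plan is to reduce the statement to finite quotients. I write \(\FFalgebra{G/B} = \varprojlim_U \Falgebra{X_U}\), where \(U\) runs over the open normal subgroups of \(G\) and \(X_U = G/BU\). Taking \(A\)-fixed points is left exact and commutes with inverse limits, so
\[
\FFalgebra{G/B}^A = \varprojlim_U \Falgebra{X_U}^A .
\]
The group \(A\) acts on the finite set \(X_U\) through a finite quotient. Hence \(\Falgebra{X_U}^A\) has as basis the orbit sums \(\sigma_O\), where \(O\) runs over the \(A\)-orbits in \(X_U\). The orbit of \(gBU\) has size \([A : A\cap {^gBU}]\), using that \(U\) is normal.

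Let \(V\subseteq U\) and let \(\rho\colon X_V\to X_U\) be the natural map. Each \(A\)-orbit \(O'\subseteq X_V\) maps onto a single \(A\)-orbit \(O\subseteq X_U\), with all fibres of size \(|O'|/|O|\). Therefore \(\rho(\sigma_{O'}) = (|O'|/|O|)\,\sigma_O\) in \(\Falgebra{X_U}\). The key quantity is the ratio
\[
\frac{[A:A\cap {^gBV}]}{[A:A\cap {^gBU}]} = [A\cap {^gBU} : A\cap {^gBV}]
\]
taken modulo \(p\). We also use that \([A:A\cap{^gB}]\) is the supremum (lcm) over \(U\) of the indices \([A:A\cap{^gBU}]\).

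(\(\Leftarrow\)) Assume \(p^\infty\) divides \([A:A\cap {^gB}]\) for every \(g\). Take a compatible family \((f_U)\) of fixed points and fix \(U\); I will show \(f_U=0\).

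For each \(g\in G\), the index \([A\cap{^gBU} : A\cap {^gB}]\) is divisible by \(p^\infty\). Hence there is an open normal \(V_g\subseteq U\) with \(p \mid [A\cap {^gBU}:A\cap {^gBV_g}]\). This condition is locally constant in \(g\): for \(g'\in gV_g\) we have \({^{g'}BV_g}={^gBV_g}\) and \({^{g'}BU}={^gBU}\), because \(V_g\subseteq U\) are normal. By compactness finitely many cosets \(gV_g\) cover \(G\). Let \(V\) be the intersection of the corresponding \(V_g\).

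Then for every \(g\in G\) the ratio \([A\cap{^gBU}:A\cap{^gBV}]\) is divisible by \(p\). So \(\rho\) kills every orbit sum in \(\Falgebra{X_V}^A\), and \(f_U=\rho(f_V)=0\). This uniformity step, passing from a \(V_g\) depending on \(g\) to a single \(V\) via compactness, is the main obstacle.

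(\(\Rightarrow\)) Suppose that for some \(g\) the \(p\)-part of \([A:A\cap{^gB}]\) is finite. Then there is \(U_0\) such that for all open normal \(V\subseteq U\subseteq U_0\) the ratio \([A\cap{^gBU}:A\cap{^gBV}]\) is prime to \(p\), hence invertible in \(\bbf_p\).

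Let \(O_U\) be the \(A\)-orbit of \(gBU\). Define
\[
f_U = [A\cap{^gBU_0}:A\cap{^gBU}]^{-1}\,\sigma_{O_U}.
\]
Since \(\rho(O_V)=O_U\), the formula \(\rho(\sigma_{O'}) = (|O'|/|O|)\,\sigma_O\) shows these elements are compatible. Extending the family to all \(U\) by cofinality gives a nonzero element of \(\FFalgebra{G/B}^A\).
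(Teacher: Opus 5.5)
Your proof is correct. The paper does not prove this lemma itself; it refers to \cite[Lem.~2.2]{garridoFreeFactorsProfinite2023}, so your argument works as a self-contained replacement.

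You reduce to the finite permutation modules \(\Falgebra{G/BU}\). There, fixed points are spanned by orbit sums, and the transition maps multiply an orbit sum by the relative index \([A\cap {^gB}U : A\cap {^gB}V]\). The lemma then becomes a statement about the \(p\)-parts of these relative indices.

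In the direction where some \(p\)-part is finite, you build the element from the normalized orbit sums \([A\cap{^gB}U_0:A\cap{^gB}U]^{-1}\sigma_{O_U}\). This is the same mechanism the paper uses in \cref{lem-splitting-augmentation}, with the compatible trace elements \((1+a+\cdots+a^{n-1})/n\); that lemma is the special case \(G=A=C\), \(B=1\).

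The other direction is where the general case needs more than that special case. The \(V\)-orbits lying over a single \(U\)-orbit come from representatives \(g\) that differ by elements of \(U\), and their relative indices can differ. Your compactness step handles this correctly: the divisibility condition is constant on cosets \(gV_g\), and a finite subcover gives one \(V\). It shows directly that for each \(U\) there is \(V\subseteq U\) such that \(\Falgebra{G/BV}^A\to\Falgebra{G/BU}^A\) is the zero map.

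You rely on two standard facts about supernatural indices (see Ribes--Zalesskii), and you apply both correctly:
\begin{enumerate}
\item multiplicativity of indices;
\item \([A:H]=\operatorname{lcm}_i[A:H_i]\) for a downward-directed family of closed subgroups \(H_i\) with intersection \(H\), here \(H_U=A\cap{^gB}U\), whose intersection is \(A\cap{^gB}\) because \({^gB}\) is closed.
\end{enumerate}
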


As a particular case of the above lemma, we will also need the following description of how the augmentation map splits in the case of a semisimple completed group algebra of a procyclic group:

\begin{lem}\label{lem-splitting-augmentation} Let \(C = \overline{\langle a \rangle}\) be a procyclic group and \(p\) a prime that does not divide the order of \(C\). Then, the space \(\FFalgebra{C}^C\) of fixed points is one-dimensional and the augmentation map \(\epsilon\colon \FFalgebra{C}^C \to \bbf_p\) is an isomorphism of \(\FFalgebra{C}\)-modules.
\end{lem}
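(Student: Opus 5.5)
The plan is to identify \(\FFalgebra{C}\) with a product of finite fields and read off the fixed points componentwise. Since \(p\) does not divide \(|C|\), write \(C = \varprojlim C/U\) over open subgroups \(U\), each \(C/U\) finite cyclic of order prime to \(p\). Then \(\FFalgebra{C} = \varprojlim \Falgebra{C/U}\). Each \(\Falgebra{C/U}\) is semisimple by Maschke's theorem, and commutative. So it decomposes as a product of fields \(\Falgebra{C/U} \simeq \bbf_p \times \prod_j \mathbb{F}_{q_j}\). Here the factor \(\bbf_p\) corresponds to the trivial character, i.e.\ to the idempotent \(e_U = |C/U|^{-1}\sum_{x \in C/U} x\).

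First I would compute the fixed points at finite level. An element \(m \in \Falgebra{C/U}\) is fixed by \(C\) if and only if \((a-1)m = 0\), since \(a\) topologically generates \(C\) and its image generates \(C/U\). In a finite cyclic group, this condition says that \(m\) is constant on \(C/U\), so \(m = \lambda \sum_{x \in C/U} x = \lambda |C/U| e_U\). Thus \(\Falgebra{C/U}^C = \bbf_p e_U\) is one-dimensional. The augmentation satisfies \(\epsilon(e_U) = 1\), which is nonzero because \(|C/U|\) is invertible in \(\bbf_p\). Hence \(\epsilon\) restricts to an isomorphism \(\Falgebra{C/U}^C \to \bbf_p\).

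Next I would check compatibility with the transition maps. For \(V \le U\), the projection \(\Falgebra{C/V} \to \Falgebra{C/U}\) sends \(e_V\) to \(e_U\): each fibre of \(C/V \to C/U\) has \(|U/V|\) elements, and \(|U/V|\,|C/V|^{-1} = |C/U|^{-1}\). Fixed points commute with inverse limits, because they are defined as a kernel, and kernels commute with \(\varprojlim\). Therefore
\[\FFalgebra{C}^C = \varprojlim \Falgebra{C/U}^C = \varprojlim \bbf_p e_U \simeq \bbf_p,\]
spanned by \(e = \varprojlim e_U\), and \(\epsilon(e) = 1\).

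Finally, \(\epsilon\) is a \(\FFalgebra{C}\)-module map from \(\FFalgebra{C}^C\), on which \(C\) acts trivially, to the trivial module \(\bbf_p\). It sends a basis vector to a basis vector, so it is an isomorphism.

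The only point requiring care is the use of \(p \nmid |C|\) in the supernatural sense. It guarantees that every finite quotient has order prime to \(p\), so that each \(e_U\) is defined. Alternatively, one could invoke \cref{fixedpoints} with \(A = C\) and \(B = 1\): it shows the fixed points are nonzero exactly when \(p^\infty \nmid |C|\). The explicit idempotent computation above is needed anyway, to get one-dimensionality and the identification with \(\epsilon\).
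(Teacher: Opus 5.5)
Your proof is correct and follows essentially the same route as the paper: both write \(\FFalgebra{C}\) as the inverse limit of the finite group algebras of its finite quotients, observe that at each finite level the fixed points are spanned by the normalized trace element (which has augmentation \(1\)), and check that the transition maps send trace elements to trace elements, so that the inverse limit of the fixed-point spaces is one-dimensional. The decomposition into a product of fields via Maschke's theorem is not needed for the argument, but it does no harm.
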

\begin{proof} We have \(C = \varprojlim_{n \in I} C/nC\) where \(I\) is the set of natural divisors of \(|C|\). Hence, \(\FFalgebra{C} = \varprojlim \Falgebra{C/nC}\) and \(\FFalgebra{C}^C = \varprojlim \Falgebra{C/nC}^C\). For finite groups of order coprime to \(p\) it is well known that \[\Falgebra{C/nC}^C = \bbf_p\left(\frac{1 + a + a^2 + \cdots +a^{n-1}}{n}\right)\] is one dimensional, spanned by the trace element of the group algebra which splits the augmentation map \(\epsilon\colon \Falgebra{C/nC} \to \bbf_p\). Therefore, it suffices to prove that \(\FFalgebra{C}^C\) is non-trivial, for which it suffices to show that the maps \(\Falgebra{C/nC}^C \to \Falgebra{C/mC}^C\) are surjective when \(n = mk\) for some \(k > 1\). However, the image of the trace element of \(\Falgebra{C/nC}\) in \(\Falgebra{C/mC}\) is \((1+\cdots+a^{m-1})/m\), so the map is surjective as desired.
\end{proof}

In accordance with the case of finite groups, in the setting of Lemma~\ref{lem-splitting-augmentation} we will denote by \(\mathrm{tr}_C\) the unique element in \(\FFalgebra{C}^C\) that maps to \(1 \in \bbf_p\) under the augmentation map, henceforth called the trace element of \(\FFalgebra{C}\). As in the proof of the Lemma, observe that the trace if functorial, that is, the image of \(\mathrm{tr}_C\) in \(\Falgebra{C/nC}\) is precisely \(\mathrm{tr}_{C/nC}\).

\subsection{Flatness of the completed algebra of the profinite completion}

One of the crucial tools in the proof of our main result is the following result, which slightly generalizes \cite[Thm. 5.6]{garridoFreeFactorsProfinite2023}.

\begin{pro}\label{flatness} 
Let \(G\) be a finitely generated virtually free group. If \(P\) is a profinite group containing \(\widehat{G}\), the left \(\Falgebra{G}\)-module \(\FFalgebra{P}\) is isomorphic to a direct union of free left \(\Falgebra{G}\)-modules.
\end{pro}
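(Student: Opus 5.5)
We reduce to the case of \(\widehat{G}\) itself. Write \(\FFalgebra{P} = \varprojlim_{U} \Falgebra{P/U}\), where \(U\) runs over the open normal subgroups of \(P\). We want to show that \(\FFalgebra{P}\) is a direct union of finitely generated free \(\Falgebra{G}\)-submodules. By Lazard's criterion in the form ``flat and every finitely generated submodule is contained in a free finitely generated submodule'', this is essentially the same as showing that every finitely generated \(\Falgebra{G}\)-submodule \(M \leq \FFalgebra{P}\) sits inside a free finitely generated submodule that is a direct summand in a compatible way.

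The first step is to decompose \(\FFalgebra{P}\) as a left \(\FFalgebra{\widehat{G}}\)-module. Choose a continuous section of \(P \to \widehat{G}\backslash P\); this is possible because \(P\) is profinite and \(\widehat{G}\) is closed. The section gives a homeomorphism \(P \simeq \widehat{G} \times (\widehat{G}\backslash P)\) of left \(\widehat{G}\)-spaces. Passing to completed group algebras, we get an isomorphism of topological left \(\FFalgebra{\widehat{G}}\)-modules
\[\FFalgebra{P} \simeq \FFalgebra{\widehat{G}} \widehat{\otimes}_{\bbf_p} \FFalgebra{\widehat{G}\backslash P}.\]
Since \(\FFalgebra{\widehat{G}\backslash P}\) is a profinite \(\bbf_p\)-vector space, it is a product \(\prod_{i\in I}\bbf_p\). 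Hence \(\FFalgebra{P} \simeq \prod_{i \in I} \FFalgebra{\widehat{G}}\) as abstract left \(\Falgebra{G}\)-modules.

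The second step uses the case \(P=\widehat{G}\), which is \cite[Thm. 5.6]{garridoFreeFactorsProfinite2023}: \(\FFalgebra{\widehat{G}}\) is a direct union of free \(\Falgebra{G}\)-modules. Equivalently, since \(G\) is virtually free, \(\Falgebra{G}\) is coherent of global dimension \(\leq 1\), and \(\FFalgebra{\widehat{G}}\) is flat with all finitely generated submodules free. The task is then to pass from one factor to an arbitrary product \(\prod_I \FFalgebra{\widehat{G}}\).

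The third step, which I expect to be the main obstacle, is exactly this passage to products. Because \(\Falgebra{G}\) is left coherent, arbitrary products of flat modules are flat. So \(\prod_I \FFalgebra{\widehat{G}}\) is flat. Because \(\Falgebra{G}\) is hereditary in the finitely generated sense (free-by-finite groups have \(\cd\leq 1\) over \(\bbf_p\) modulo torsion issues), every finitely generated submodule of a flat module is projective. To upgrade projective to free and to exhibit a direct union of free modules, I would mimic the argument of \cite[Thm. 5.6]{garridoFreeFactorsProfinite2023}:
\begin{enumerate}
\item Pass to a free normal subgroup \(F \unlhd G\) of finite index.
\item Use that finitely generated projective \(\Falgebra{G}\)-modules are detected by their restrictions to the finite vertex groups in a graph-of-finite-groups decomposition of \(G\).
\item Check that each finite subgroup \(Q \leq G\) acts freely on \(P\), so that \(\FFalgebra{P}\) restricted to \(\Falgebra{Q}\) is a product of free \(\Falgebra{Q}\)-modules. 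This gives the needed local freeness.
\end{enumerate}
The rank and stability conditions then let us enlarge any finitely generated submodule to a free finitely generated one: we take its saturation inside a finite subproduct and add a free complement, as in the original proof. This gives the required direct union.
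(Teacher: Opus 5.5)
Your first step is correct and useful. A continuous section of \(P\to\widehat{G}\backslash P\) gives \(\FFalgebra{P}\simeq\prod_I\FFalgebra{\widehat{G}}\) as left \(\FFalgebra{\widehat{G}}\)-modules. Since \(\Falgebra{G}\) is coherent, this product is a flat \(\Falgebra{G}\)-module.

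The gap is in your third step, the passage from flatness to ``direct union of free modules'' over \(\Falgebra{G}\). That passage is not available over \(\Falgebra{G}\), which is not a semifir once \(G\) has torsion. For example, take \(G=\bbz/2\) and \(p\) odd: then \(\Falgebra{G}\simeq\bbf_p\times\bbf_p\), and the flat module \(\bbf_p\times 0\) is not a direct union of free modules.

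The claims you use to bridge this are false when \(G\) has \(p\)-torsion. \(\Falgebra{G}\) then has infinite global dimension. Finitely generated submodules of flat modules need not be projective, let alone free; already inside \(\Falgebra{G}\subseteq\FFalgebra{\widehat{G}}\), the submodule \(\Falgebra{G}(t-1)\) with \(t\) of order \(p\) is not. So your ``equivalent'' reformulation of the case \(P=\widehat{G}\) is also wrong.

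The rest of the sketch does not repair this. Freeness over the finite subgroups of \(G\) is vacuous for \(G=\bbz\) and says nothing about projectivity over \(\Falgebra{G}\). Also, a finitely generated submodule of an infinite product \(\prod_I\FFalgebra{\widehat{G}}\) is in general not contained in any finite subproduct. So ``saturate inside a finite subproduct and add a free complement'' has no content as stated.

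The missing idea is to do the flat-to-free step over the free subgroup, where it holds, and then induce up. Take \(F\unlhd G\) free of finite index. Then \(\FFalgebra{\widehat{G}}\simeq\Falgebra{G}\otimes_{\Falgebra{F}}\FFalgebra{\overline{F}}\). Since \(\Falgebra{G}\) is finitely generated free as a right \(\Falgebra{F}\)-module, your Step 1 yields \(\FFalgebra{P}\simeq\Falgebra{G}\otimes_{\Falgebra{F}}\prod_I\FFalgebra{\overline{F}}\) as left \(\Falgebra{G}\)-modules.

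Now \(\Falgebra{F}\) is a fir, hence coherent. Together with flatness of \(\FFalgebra{\widehat{F}}\) over \(\Falgebra{F}\) \cite[Prop. 5.3]{garridoFreeFactorsProfinite2023}, this makes \(\prod_I\FFalgebra{\overline{F}}\) flat over \(\Falgebra{F}\). Over a fir, a flat module is a direct union of free modules \cite[Prop. 2.3.21]{cohnFreeIdealRings2006}. Finally, the exact functor \(\Falgebra{G}\otimes_{\Falgebra{F}}-\) carries such a union to a direct union of free \(\Falgebra{G}\)-modules.

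The paper also ends by inducing from \(F\), via an open subgroup \(P_0=\widehat{G}U\) with \(U\cap\widehat{G}=\overline{F}\) and \(\FFalgebra{P}\simeq\FFalgebra{P_0}^{[P:P_0]}\). It obtains flatness over \(\Falgebra{F}\) differently, though. It shows \(\FFalgebra{\widehat{F}}\) is semi-hereditary, deduces via Chase that inverse limits of flat \(\FFalgebra{\widehat{F}}\)-modules are flat, and descends using transitivity of flatness. With the repair above, your product decomposition would replace that argument with coherence of \(\Falgebra{F}\) alone.
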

\begin{proof} Let \(F \leq G\) be a normal free subgroup of finite index. By \cite[Cor. 7.11.8]{cohnFreeIdealRings2006}, the algebra \(\Falgebra{F}\) is a free ideal ring, that is, any left or right ideal is a free as an \(\Falgebra{F}\)-module. Moreover, it was shown in \cite[Prop. 5.3]{garridoFreeFactorsProfinite2023} that \(\FFalgebra{\widehat{F}}\) is flat as an \(\Falgebra{F}\)-module. Observe that \(\widehat{F}\) is isomorphic to the closure \(\overline{F}\) of \(F\) as a subgroup of \(G\) since the index \([G\colon F]\) is finite. We will first prove that \(\FFalgebra{P}\) is flat as an abstract \(\FFalgebra{\widehat{F}}\)-module.

Observe that $\FFalgebra{\widehat{F}}$ is semi-hereditary: if $I \leq \FFalgebra{\widehat{F}}$ is a finitely generated left or right ideal, then it is closed and profinite. Since \(\FFalgebra{\widehat{F}}\) has global dimension \(1\) in the category of profinite modules (\cite[Thm. 4.1]{brumerPseudocompactAlgebrasProfinite1966}), \(I\) is projective in said category. Therefore, any surjection \(\FFalgebra{\widehat{F}}^n \to I\) (which is automatically continuous) splits, showing that \(I\) is a direct summand of an abstract free module and thus projective as an abstract \(\FFalgebra{\widehat{F}}\)-module.

By a theorem of Chase (\cite[Thm. 4.32]{rotmanIntroductionHomologicalAlgebra2008}), a ring is semi-hereditary if and only if it is coherent and submodules of flat modules are flat. Moreover, a ring is coherent if and only if any product of flat modules is flat (\cite[Thm. 7.9]{rotmanIntroductionHomologicalAlgebra2008}). Hence, in a semi-hereditary ring, any inverse limit of flat modules is flat. Since \(\FFalgebra{P}\) is an inverse limit of finitely generated free \(\FFalgebra{\widehat{F}}\)-modules, it must be flat over \(\FFalgebra{\widehat{F}}\).

However, flatness is transitive, so \(\FFalgebra{P}\) is flat over \(\Falgebra{F}\). By \cite[Prop. 2.3.21]{cohnFreeIdealRings2006}, it must be the union of free \(\Falgebra{F}\)-modules. Choose now a normal open subgroup \(U\) of \(P\) such that \(U \cap \widehat{G} = \overline{F} \simeq \widehat{F}\), and let \(P_0 =\widehat{G}U \leq_o P\). By the previous remarks, \(\FFalgebra{U}\) is a union of free \(\Falgebra{F}\)-modules, and since tensor products commute with direct sums, \(\FFalgebra{P_0} \simeq \Falgebra{G} \otimes_{\Falgebra{F}} \FFalgebra{U}\) is also a union of free \(\Falgebra{G}\)-modules. As an \(\FFalgebra{P_0}\)-module, \(\FFalgebra{P}\) is isomorphic to \(\FFalgebra{P_0}^{[P\colon P_0]}\). Since the property of being a direct union of free submodules is preserved under direct sums, we are done.
\end{proof}
 
\subsection{Small cancellation groups}

Let
\begin{equation}\label{presentation}
  G = \langle X \mid R \rangle  
\end{equation}
be a finite group presentation where \(R \subseteq F(X)\) is a set of freely reduced and cyclically reduced words in the free group \(F(X)\) such that \(R\) is closed under taking cyclic permutations and inverses. A nontrivial freely reduced word \(u \in F(X)\) is called a \emph{piece} with respect to the presentaion \eqref{presentation} if there exist two distinct elements \(r_1, r_2 \in R\) that have \(u\) as a maximal common initial segment.

Let \(0 < \lambda < 1\). The presentation \eqref{presentation} above is said to satisfy the \(C'(\lambda)\) small cancellation condition if whenever \(u\) is a piece with respect to \eqref{presentation} and \(u \) is a subword of some \(r \in R\), then \[ |u| < \lambda |r|\,.\] Here, \(|v|\) denotes the length of a word \(v\) in the basis \(X\).

Note that if \(G = \langle X \mid S \rangle\) is a group presentation where the set of defining relators \( S \) is not closed for cyclic permutations or inverses, we can always take the set \( R \) which consists of all cyclic permutations of elements of \(S \cup S^{-1}\). If then \(G = \langle X \mid R \rangle\) has property \(C'(1/\lambda)\), then we say that \(G = \langle X \mid S \rangle\) also has property \(C'(1/\lambda)\).

\begin{teo}\label{c16} If \(G =  \langle X \mid S \rangle\) is a finite presentation satisfying the \(C'(1/6)\) small cancelation condition, then \(G\) is residually finite and cohomologically good.
\end{teo}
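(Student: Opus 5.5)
This result is essentially assembled from known facts about small cancellation groups, so the proof should cite the right chain of results rather than build anything new.

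\emph{Step 1: hyperbolicity and cubulation.} A finite \(C'(1/6)\) presentation gives a hyperbolic group (Greendlinger's lemma yields a linear isoperimetric inequality). Wise's theorem says that finitely presented \(C'(1/6)\) groups act properly and cocompactly on a CAT(0) cube complex, via Sageev's construction applied to the walls of the Cayley complex. Agol's theorem then shows that \(G\) is virtually compact special.

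\emph{Step 2: residual finiteness.} A virtually special group embeds, up to finite index, in a right-angled Artin group. Right-angled Artin groups are residually finite, and hence so is \(G\). (Alternatively, quasiconvex subgroup separability gives this directly.)

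\emph{Step 3: cohomological goodness.} Let \(H \leq G\) be a finite-index subgroup that is compact special. By the Haglund--Wise theory together with Wise's quasiconvex hierarchy (or directly by Agol's work), \(H\) has a quasiconvex hierarchy: it is built from trivial groups by amalgamated products and HNN extensions along quasiconvex subgroups. Because \(H\) is virtually special and hyperbolic, it is also QCERF.

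Goodness then follows by induction along the hierarchy, using the criterion of Grunewald--Jaikin-Zapirain--Zalesskii \cite[Thm. 3.9]{Grunewald2008}. That criterion says that if \(G = A *_C B\) or \(G = A*_C\) is efficient, \(A\) and \(B\) are good, and \(C\) is good, then \(G\) is good. Efficiency holds because QCERF makes the edge groups separable and induces the full profinite topology on the vertex groups. The edge groups appearing in the hierarchy are themselves virtually compact special and hyperbolic, so they are good by the same induction; this is the standard argument for virtually compact special hyperbolic groups. Finally, goodness passes from a finite-index subgroup to the whole group. Hence \(G\) is cohomologically good.

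\emph{Main obstacle.} The difficult step is the goodness of virtually compact special groups. The cleanest route is to cite the known theorem that virtually compact special groups are good, which follows from the fact that right-angled Artin groups are good and goodness passes to virtual retracts. Compact special groups embed in right-angled Artin groups as virtual retracts (Haglund--Wise). If \(H \leq A\) is a retract with retraction \(r\), then the composite \(\HH^*(\widehat{H},M)\to\HH^*(\widehat A, \ldots)\) factors through the corresponding map for \(A\), which is an isomorphism. Since \(\widehat{H}\) embeds as a retract of \(\widehat{A}\), the restriction maps commute, and so \(H\) is good. I would present the proof in this form, citing Wise, Agol and Haglund--Wise, rather than running the full hierarchy induction.
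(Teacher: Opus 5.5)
Your proof is correct and follows essentially the same route as the paper: Wise's cubulation, hyperbolicity via Greendlinger, Agol's theorem for virtual specialness, and Haglund--Wise canonical completion and retraction to realise a finite-index subgroup as a retract of a finite-index subgroup of a RAAG, which yields residual finiteness. For goodness the paper simply cites Minasyan--Zalesskii (Prop. 3.8), that virtually compact special groups are good; your retract argument (RAAGs are good, goodness passes to retracts and finite-index sub- and overgroups) is exactly the content of that citation, so the hierarchy induction you sketched first is not needed.
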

\begin{proof} By \cite[Thm. 1]{Wi04}, \(G\) acts properly discontinuously and cocompactly on a CAT(0) cube complex \(C\). Observe that \(G\) is word-hyperbolic by Greendlingers' lemma. By \cite[Thm. 1.1]{Ag13}, \(G\) contains a finite index subgroup \(H\) that acts faithfully and specially on \(C\). By \cite[Sec. 6]{Haglund2007}, this implies that \(H\) embeds as a retract of a finite index subgroup of a RAAG, showing that \(H\) and thus \(G\) residually finite. By \cite[Prop. 3.8]{MZ}, we can also conclude that \(G\) is cohomologically good.
\end{proof}

\begin{defn} We say that a collection \(A\) of elements of a group \(G\) is \emph{malnormal} if for any \(a_1\,, a_2 \in A\) such that \({^g\langle a_1 \rangle} \cap \langle a_2 \rangle \neq \{1\}\) we have \(a_1 = a_2\) and \(g \in \langle a_1 \rangle\).
\end{defn}

 The following result is well-known (see, for example, \cite[Thm. 9.10]{Wi12} or \cite[Lem. 5.11]{AM24}).
  
\begin{pro}\label{bigpowers} Let \(F\) be a finitely generated free group and let \(A\) be a finite malnormal collection of elements of \(F\). There exists a constant \(C\) such that for any collection of natural numbers \(\{n_a\}_{a\in A}\) with \(n_a > C\) for all \(a \in A\), the quotient group \[
F/\langle\!\langle a^{n_a} \colon a \in A \rangle\!\rangle\]
satisfies the small cancellation condition \(C'(1/6)\).
\end{pro}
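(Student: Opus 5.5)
The plan is a direct combinatorial argument on overlaps of periodic words. Bounded overlaps come from malnormality, and the relator lengths then grow linearly in the exponents.

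\emph{Normalisation.} Replacing an element \(a \in A\) by a conjugate does not change the normal closure of \(a^{n_a}\), so it does not change the quotient group. Conjugating every element of \(A\) by the same \(g\) preserves malnormality. Conjugating elements individually may not, so I will use the following reformulation, which only involves conjugacy classes. For \(a_1, a_2 \in A\), the condition \({^g\langle a_1\rangle} \cap \langle a_2\rangle \neq \{1\}\) forces \(a_1 = a_2\) and \(g \in \langle a_1 \rangle\). After passing to conjugates this gives two facts:
\begin{enumerate}[(a)]
\item distinct elements of \(A\) have no conjugate non-trivial powers;
\item each \(a\) is not a proper power, since \(a = c^k\) with \(k>1\) gives \(g = c \notin \langle a\rangle\) with \({^c\langle a\rangle} = \langle a \rangle\).
\end{enumerate}
Both facts are invariant under conjugating each element separately, so I may assume every \(a \in A\) is cyclically reduced. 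Then \(r_a = a^{n_a}\) is cyclically reduced of length \(n_a|a|\). I take \(S = \{a^{n_a}\}\), and \(R\) is the set of cyclic permutations of \(S \cup S^{-1}\). Each element of \(R\) is a cyclic permutation of \(a'^{\,n_a}\), where \(a'\) is a cyclic permutation of \(a\) or of \(a^{-1}\).

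\emph{Key overlap lemma.} Let \(M = 2\max_{a\in A}|a|\). I claim every piece has length less than \(M\). Suppose a piece \(u\) of length at least \(M\) is a common initial segment of distinct \(r_1, r_2 \in R\). Then \(u\) has period \(|b_1|\) and period \(|b_2|\), where \(b_1, b_2\) are the cyclic words (cyclic permutations of \(a^{\pm1}\) or \(b^{\pm1}\)) starting at the same letter as \(u\). Since \(|u| \geq |b_1| + |b_2|\), the prefixes \(b_1 b_2\) and \(b_2 b_1\) of \(u\) coincide; this is the Fine--Wilf/Lyndon--Sch\"utzenberger step. Hence \(b_1\) and \(b_2\) commute in \(F\) and are powers of a common root. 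By (b), \(b_1 = b_2^{\pm1}\). Now \(b_i\) is conjugate to \(a_i^{\pm1}\), so \(a_1^{\pm1}\) is conjugate to \(a_2^{\pm 1}\), and (a) gives \(a_1 = a_2\). A non-trivial element of a free group is never conjugate to its inverse, so the signs agree and \(b_1 = b_2\) as words. Since \(r_1\) and \(r_2\) are both \(b_1^{n_a}\) starting at the same letter, \(r_1 = r_2\), which contradicts distinctness.

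\emph{Conclusion.} Set \(C = 6M\). If \(n_a > C\) for all \(a\), then any piece \(u\) inside \(r \in R\) coming from \(a\) satisfies
\[|u| < M < n_a/6 \leq n_a|a|/6 = |r|/6,\]
so the presentation is \(C'(1/6)\).

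\emph{Main obstacle.} The step needing most care is the overlap lemma. There one must check that periodicity of a common subword really gives the commutation relation \(b_1b_2 = b_2b_1\), correctly handling cyclic permutations and inverses. One must also check that self-overlaps of a single relator at non-multiple shifts are excluded precisely by \(a\) not being a proper power. The remaining steps are bookkeeping.
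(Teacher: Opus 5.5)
\textbf{Comparison with the paper.} The paper gives no proof of this proposition. It quotes the result as well known, citing \cite[Thm.~9.10]{Wi12} and \cite[Lem.~5.11]{AM24}. Your argument is a correct, self-contained, elementary replacement, with one line to repair (below). You first pass to cyclically reduced representatives. The periodicity step then shows that a common prefix $u$ of $r_1\neq r_2$ with $|u|\ge |b_1|+|b_2|$ forces $b_1b_2=b_2b_1$ as words. Finally, cyclic centralizers in $F$ together with (a) and (b) force $r_1=r_2$. Your route gives an explicit constant, $C=12\max_{a\in A}|a|$, and a bound on piece length that is uniform over all of $R$. That uniformity also covers the paper's slightly unusual formulation of $C'(\lambda)$, in which $u$ may sit inside any $r\in R$, not only the two relators defining it. The citation buys only brevity.

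\textbf{The step to fix.} From $b_1=b_2^{\pm1}$ and $a_1=a_2=a$, the fact that $a$ is not conjugate to $a^{-1}$ does not exclude $b_1=b_2^{-1}$. Take $r_1$ coming from $a^{n_a}$ and $r_2$ from $a^{-n_a}$. Then $b_1$ and $b_2^{-1}$ are both conjugate to $a$, so your sign argument gives no contradiction. This case is excluded by reducedness instead. The word $b_1b_2$ is a non-empty prefix of the reduced word $u$, so it is non-trivial in $F$. Equivalently, $b_2=b_1^{-1}$ would begin with the inverse of the last letter of $b_1$, and sharing a first letter with $b_1$ would contradict that $b_1$ is cyclically reduced. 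With that line added, $b_1=b_2$ as words and the rest goes through.

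\textbf{A side remark.} Your caution that conjugating elements of $A$ individually might destroy malnormality is unnecessary, though harmless. Replace each $a_i$ by $a_i'={}^{h_i}a_i$ and suppose ${}^g\langle a_i'\rangle\cap\langle a_j'\rangle\neq\{1\}$. Then malnormality of $A$ gives $i=j$ and $h_i^{-1}gh_i\in\langle a_i\rangle$, that is, $g\in\langle a_i'\rangle$.
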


Groups with a \(C'(1/6)\) presentation are known to satisfy many additional properties.

\begin{pro}\label{small} Let \(A = \{a_1, \ldots, a_k\}\) be a malnormal family of elements in a free group \(F\) freely generated by \(x_1,\ldots, x_d\) such that no element of \(A\) is a proper power. Suppose that \[G = \langle x_1,\ldots, x_d \mid a_1^{n_1}, \ldots, a_k^{n_k} \rangle\] is a presentation satisfying the small cancellation condition \(C'(1/6)\). Then:
\begin{enumerate}
    \item \cite[Cor. 2.5]{Ly66} The presentation is combinatorially aspherical: if \(X\) is the 2-complex associated to the presentation and \(\widetilde{X}\) is obtained from its universal covering space by eliminating repeated 2-cells, then \(\widetilde{X}\) is aspherical.
    \item \cite[Thm. 25]{St88} For each \(1 \leq i \leq k\), the order of the image of \(a_i\) in \(G\) is exactly \(n_i\).
\end{enumerate}
\end{pro}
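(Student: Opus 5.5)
Both parts are classical consequences of the \(C'(1/6)\) condition, and the paper will simply invoke \cite{Ly66} and \cite{St88}. Still, here is how I would prove them directly, to check that the hypotheses match. Throughout I replace each \(a_i\) by a cyclically reduced conjugate. This changes neither malnormality, nor the group \(G\), nor the orders of the images. The symmetrized relator set \(R\) is then the set of cyclic permutations of the words \(a_i^{\pm n_i}\).

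For (1), I would argue with van Kampen diagrams on the sphere.

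\begin{enumerate}
\item Combinatorial asphericity means that \(\pi_2\) of the presentation complex is generated by spherical diagrams that are not reduced. Here a diagram is reduced if no two adjacent faces are mirror images across a common edge.
\item Take a reduced spherical diagram \(D\). By \(C'(1/6)\), every interior edge path between two faces is a piece. Hence every face has at least \(7\) sides when we count maximal boundary arcs shared with neighbours.
\item After removing vertices of degree \(2\), every vertex has degree at least \(3\). The curvature count \(\sum(6-\deg f)+2\sum(3-\deg v)\ge 6\chi(S^2)=12\) is then negative on the left, which is a contradiction.
\item So every spherical diagram contains a cancelling pair. These pairs are of two kinds: genuine inverse pairs, and pairs that arise because \(a_i^{n_i}\) is a proper power, so one relator cell admits a \(\bbz/n_i\) symmetry.
\item Passing to \(\widetilde{X}\), where repeated \(2\)-cells are identified, kills exactly these identities. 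Hence \(\pi_2(\widetilde{X})=0\).
\end{enumerate}

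The step needing most care is the fourth. One must check that the periodicity of \(a_i^{n_i}\) does not create reduced diagrams. This is where the hypothesis that \(a_i\) is not a proper power enters: the relator is then exactly an \(n_i\)-th power of a primitive period, so its cyclic symmetry group is exactly \(\bbz/n_i\).

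For (2), suppose \(a_i^m=1\) in \(G\) with \(0<m<n_i\), and choose \(m\) to minimise \(\min(m,n_i-m)\). Since \(a_i^m\) is cyclically reduced, Greendlinger's lemma gives a cyclic permutation of it that contains a subword \(u\) of some \(r\in R\) with \(|u|>\tfrac12|r|\).

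\begin{enumerate}
\item The subword \(u\) is a common subword of a power of \(a_i\) and of a power of \(a_j\).
\item If \(j\neq i\), or if the two periodic words are misaligned, then the overlap is a piece, or contains one of length at least \(|a_j|\). Malnormality of \(\{a_i\}\), together with \(C'(1/6)\), forces such an overlap to have length less than \(\tfrac16|r|\). This contradicts \(|u|>\tfrac12|r|\).
\item Therefore \(j=i\), the alignment is exact, and \(|u|>\tfrac12 n_i|a_i|\). In particular \(m>n_i/2\).
\item Replacing \(u\) by the inverse of the complementary part of \(r\) shows that \(a_i^{m-n_i}=1\), that is, \(a_i^{n_i-m}=1\).
\item Since \(m>n_i/2\), we have \(n_i-m<m\), and also \(n_i-m\) lies strictly between \(0\) and \(n_i\). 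This contradicts the minimal choice of \(m\).
\end{enumerate}

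Hence the order of the image of \(a_i\) is exactly \(n_i\). The main obstacle here is the second step, the alignment argument: one must show that a long common subword of two periodic words, with periods from a malnormal family, is either aligned with the same period or is short.
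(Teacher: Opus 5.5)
Your overall approach is right and matches what the paper relies on. The paper gives no argument of its own, only the citations to Lyndon and Strebel, and your sketches are the classical proofs behind them: the $C(6)$ curvature count excluding reduced spherical diagrams for (1), and Greendlinger's lemma forcing $m>n_i/2$ for (2). In your last step of (1), make one point explicit. In $\widetilde{X}$, a pair of adjacent faces with mirror-image labels maps onto one and the same $2$-cell with opposite orientations, so it can be excised by a homotopy. Induction on the number of faces then gives $\pi_2(\widetilde{X})=0$.

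One step in (2) fails as written. You choose $m$ minimising $\min(m,n_i-m)$, but replacing $m$ by $n_i-m$ leaves that quantity unchanged, so step 5 contradicts nothing. The repair is immediate. Steps 1--3 apply to \emph{every} exponent $0<m<n_i$ with $a_i^m=1$ in $G$ and give $m>n_i/2$. Since $a_i^{n_i}=1$ in $G$, the exponent $n_i-m$ qualifies as well, so also $n_i-m>n_i/2$, which is absurd. Equivalently, take $m$ minimal, or assume $m\le n_i/2$ from the start. Your step 4 is then automatic and needs no replacement argument.

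The alignment step you flag as the main obstacle is also shorter than you fear, and needs neither malnormality nor phase considerations. Since $|u|\le m|a_i|<n_i|a_i|$, the word $u$ is a prefix of some cyclic permutation $r'$ of $a_i^{n_i}$. If the cyclic permutation of $r$ beginning with $u$ is an element of $R$ different from $r'$, then $u$ lies in a piece and $C'(1/6)$ gives $|u|<|r|/6$. Otherwise $|r|=|r'|=n_i|a_i|$.

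Finally, the hypothesis that $a_i$ is not a proper power (which already follows from malnormality) is not what makes (1) true, since combinatorial asphericity holds for every $C'(1/6)$ presentation. What it provides is that the stabiliser of the $2$-cell of $\widetilde{X}$ attached along $a_i^{n_i}$ is exactly $\langle a_i\rangle$. That is what the resolution of $\mathbb{Z}$ stated after the proposition uses.
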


An important consequence of the combinatorial asphericity is that we obtain the following resolution of the trivial \(\bbz[G]\)-module \(\bbz\):
\begin{equation}
    \label{resolution}
0\to \bigoplus_{i=1}^k\bbz[G/\langle a_i\rangle ]\overset{\tau}{\to}\bbz[G]^d\to \bbz[G]\to \bbz \to 0\,,
\end{equation}
where \(\tau\) maps \(x \in \bbz[G/\langle a_i\rangle]\) to \[x\cdot (a_i^{n_i-1} + \cdots +a +1)\cdot \left(\frac{\partial a}{\partial x_1}\,, \cdots \,, \frac{\partial a}{\partial x_d}\right)\,.\]

\section{The profinite completion of finitely generated residually finite groups in \texorpdfstring{\(\mathcal{C}\)}{C}}\label{sect:profcompl}

In this section, \(G = \pi(Y, \G) \) denotes the fundamental group of a finite graph of finitely generated free groups with cyclic edge subgroups and we assume that it is residually finite. We aim to understand the profinite completion of \(G\) and prove that the restriction map in cohomology \(\HH^i_{\mathrm{ct}}(\widehat{G},M) \to \HH^i(G,M)\) is an isomorphism for every finite \(G\)-module \(M\) and every degree \(i \geq 0\).

As explained in \cref{sect:prelim}, we have \(\widehat{G} \cong \hpi(Y, \overline{\G})\). Therefore, our task reduces to understanding \(\overline{\G(v)} \) for each vertex \(v\) of \(Y\) and \(\overline{\G(e)}\) for each vertex \( e \) of \( Y \). If \(G\) is LERF, then \(\overline {\G(x)}\simeq \widehat {\G(x)}\) for all vertices and edges \(x\). If \(G\) is not LERF, this no longer needs to be true. However, we will show that we can still control the structure of \(\widehat{G}\).

First, we show that we can reduce the considerations of the paper to the situation when our graph of groups has only one vertex \(u\), that is, \(G\) is obtained from a base group \(\G(u)\) by finitely many HNN extensions of cyclic subgroups. 

Indeed, choose any vertex \(u\) in \(Y\) and let \(Y_0\) be the new graph obtained by connecting \(u\) to each other vertex \(v\) of \(Y\) by adding a new edge. Extend the graph of groups \(\G\) on \(Y\) to a graph of groups \(\G_0\) on \(Y_0\) by defining \(\G_0(e) = \{1\}\) for each new edge \(e\) added. We clearly have \(\pi(Y_0,\G_0) = G * F_k\) where \(k = |V(Y)-1|\) and \(F_k\) is a free group of rank \(k\), which can be seen by collapsing all the old edges in \(Y\). On the other hand, if \(Y_1\) denotes the graph where we collapsed all the \(k\) new edges, identifying all the vertices of \(Y_0\) with \(u\), then \(Y_1\) is a bouquet with a single vertex \(u\) and edge set in bijection with \(E(Y)\). The graph of groups \(\G_0\) induces a graph of groups \(\G_1\) on \(Y_1\) such that \[\G_1(u) = \underset{v \in V(Y)}{\ast} \G(v)\,.\]

We then have \(\pi(Y_1,\G_1) = G * F_k\) and \(\hpi(Y_1,\overline{\G_1}) = \widehat{G} \amalg \widehat{F_k}\). Both the abstract and the profinite versions of the Mayer-Vietoris sequence imply the existence of a commutative diagram
\[\begin{tikzcd}
{\HH^i_{\mathrm{ct}}(\widehat{G}\amalg \widehat{F_k}, M)} \arrow[r] & {\HH^i(G*F_k,M)}       \\
{\HH^i_{\mathrm{ct}}(\widehat{G},M)} \arrow[u] \arrow[r]            & {\HH^i(G,M)} \arrow[u]
\end{tikzcd}\]
where the vertical arrows are isomorphisms for \(i > 1\). Hence, \(G\) is cohomologically good if and only if \(G * F_k\) is cohomologically good. Therefore, we may assume without loss of generality that \(Y\) has only one vertex \(u\).

Suppose now that the vertex group \(\G(u)\) is free. In order to describe the groups \( \overline{\G(u)} \) and \( \overline{\G(e)} \) for \(e \in E(Y)\), we recall some notation from \cite{AM24}. First, observe that we can present \(G = \pi(Y, \G) \) as
\[G = \left\langle a_1, \ldots, a_k, q_1, \ldots, q_l \,\middle|\, q_j u_j^{m_j} q_j^{-1} = v_j^{n_j} (1\le j\le l)\right\rangle,\]
where:
\begin{enumerate}
    \item \(a_1, \ldots, a_k \) are free generators of \( F = \mathcal{G}(u) \),
    \item  \( m_j, n_j \in \mathbb{Z} \),
    \item  The set \(A= \{ u_j, v_j |1\le j\le l\}\ \)  is a malnormal set of cyclically reduced elements (some of the $u_j$ and $v_i$ may be equal).
\end{enumerate}

We now construct a graph \(\Gamma\) associated to this presentation. The set of vertices \(V(\Gamma)\) is given by the set \( A \), and to each relation \[q_j u_j^{m_j} q_j^{-1} = v_j^{n_j}\] we associate an edge connecting  \(v_j\) and  \(u_j\). The ends of the edges are labeled by nonzero integers: the end corresponding to \(u_j\) is labeled by \(m_j\), and the one corresponding to \(v_j\) is labeled by \(n_j\). These edge-end labels naturally induce labels on the ends of paths in \(\Gamma\): for any path \(\gamma\) in \(\Gamma\), we define the \emph{loop product} \(\operatorname{lp}(\gamma) \in \mathbb{Z} \setminus \{0\}\) as the product of all the labels on the outgoing ends of edges along \(\gamma\).

We denote by \(\overline{\gamma}\) the inverse path of a path \(\gamma\) of \(\Gamma\). A cycle \(\gamma\) in \(\Gamma\) is called \emph{balanced} if \(|\operatorname{lp}(\gamma)| = |\operatorname{lp}(\overline{\gamma})|\); otherwise, it is called \emph{unbalanced}. A connected component \(C\) of \(\Gamma\) is called \emph{clean} if all its cycles are balanced.

The graph \(\Gamma\) encodes all information about Baumslag--Solitar subgroups of \(G\). Since \(G\) is residually finite, the only Baumslag--Solitar subgroups \(BS(n,m)\) that can appear as subgroups of \(G\) are those for which \(|n| = |m|\) or \(\min\{|n|, |m|\} = 1\). For a non-commutative Baumslag--Solitar subgroup \(B\) of \(G\), there exists a unique connected component \(C\) of \(\Gamma\) such that the unbalanced elements of \(B \simeq BS(n,m)\) are conjugate in \(G\) to some element of \(\langle a \rangle\), where \(a \in V(C)\). Moreover, 
the component is non-clean if  $\min\{|n|, |m|\} = 1 $ ($\ne \max\{|n|, |m|\}$). If \(B \cong \mathbb{Z}^2\) is a subgroup of \(G\), then there is also a unique clean connected component \(C\) of \(\Gamma\) such that for every \(a \in V(C)\), some power \(a^k\) is conjugate in \(G\) to an element of \(B\).

The condition that \(G\) is residually finite implies strong consequences for the non-clean components of \( \Gamma \).

\begin{pro}[{\cite[Prop.~4.9]{AM24}}]\label{eulre0} If \(G\) is residually finite, then any non-clean component \(C\) of \(\Gamma\) contains a unique embedded cycle \(\gamma\). Moreover, one of the loop-products \(\operatorname{lp}(\gamma)\) or \(\operatorname{lp}(\overline{\gamma})\) is equal to \( \pm 1 \).
\end{pro}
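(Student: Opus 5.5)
Since this is \cite[Prop.~4.9]{AM24}, one could simply cite it. Here is how I would prove it directly from the residual finiteness of all Baumslag--Solitar subgroups (\cref{res-free}).

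\textbf{Step 1: cycles give Baumslag--Solitar relations.} Let \(\gamma\) be a cycle in \(\Gamma\) based at a vertex \(a \in A\), traversing edges \(e_1,\dots,e_r\). Each edge \(e_j\) comes with a relation of the form \(q u^{m} q^{-1} = v^{n}\). I would chain these relations by taking suitable powers: if \(a^{m_1}\) is conjugate to \(b^{n_1}\) and \(b^{m_2}\) is conjugate to \(c^{n_2}\), then \(a^{m_1m_2}\) is conjugate to \(c^{n_1n_2}\). Doing this along \(\gamma\) produces \(h \in G\) with
\[h\,a^{\operatorname{lp}(\gamma)}\,h^{-1} = a^{\operatorname{lp}(\overline{\gamma})}\]
(up to fixing the orientation convention). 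The element \(h\) is a product of the stable letters \(q_j^{\pm1}\) interleaved with elements of \(F\). Malnormality of \(A\) and Britton's lemma for the HNN presentation should show that \(h\) acts hyperbolically on the Bass--Serre tree and that \(\langle a, h\rangle \simeq BS(\operatorname{lp}(\gamma), \operatorname{lp}(\overline{\gamma}))\). I expect this to go via a normal-form/ping-pong argument showing that no further relations hold.

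\textbf{Step 2: each unbalanced cycle has a loop product \(\pm1\).} Since \(G\) is residually finite, \(BS(\operatorname{lp}(\gamma), \operatorname{lp}(\overline{\gamma}))\) is residually finite. By the Moldavanskii criterion, either \(|\operatorname{lp}(\gamma)| = |\operatorname{lp}(\overline{\gamma})|\), or one of the two is \(\pm1\). Consequently every unbalanced cycle has one loop product equal to \(\pm1\). In particular this holds for any unbalanced embedded cycle in a non-clean component \(C\), which exists since \(C\) is non-clean.

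\textbf{Step 3: uniqueness of the embedded cycle.} Fix an unbalanced embedded cycle \(\gamma\) in \(C\) with \(\operatorname{lp}(\gamma) = \pm1\) and \(|\operatorname{lp}(\overline{\gamma})| = N > 1\). Suppose \(\delta\) is a different embedded cycle in \(C\), and let \(\pi\) be a path from the base of \(\gamma\) to the base of \(\delta\). I would look at composite cycles such as \(\gamma^{k}\,\pi\,\delta^{\pm1}\,\overline{\pi}\), or a theta-subgraph formed by \(\gamma\) and \(\delta\) when they share an arc. Loop products are multiplicative under concatenation, so the two loop products of such a composite are \(\operatorname{lp}(\pi)\operatorname{lp}(\overline{\pi})\) times, respectively, \(\operatorname{lp}(\delta^{\pm1})\) and \(N^{k}\operatorname{lp}(\overline{\delta}^{\pm1})\). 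The aim is to choose \(k\) and the orientation of \(\delta\) so that both products have absolute value \(>1\) and differ. Step 2 applied to that composite cycle then gives a contradiction.

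\textbf{Main obstacle.} The hard case is the degenerate one, where all labels on \(\delta\) and \(\pi\) are \(\pm1\), so the composite products are only \(\pm1\) and \(\pm N^{k}\). Here I would use the graph structure rather than the products alone. If \(\delta\) meets \(\gamma\) along an arc, split \(\gamma\) into two arcs and compare the three resulting cycles of the theta-graph. One of them inherits a loop product whose two sides are both divisible by the non-unit labels of \(\gamma\) in both directions, giving \(BS(n,m)\) with \(1<|n|\neq|m|\). If \(\delta\) is disjoint from \(\gamma\), I would instead build two distinct unbalanced BS relations on the same cyclic subgroup and show that together they yield a non-residually-finite BS subgroup. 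Step 1 (that these subgroups really are BS groups) must also be justified carefully for non-embedded composite cycles.
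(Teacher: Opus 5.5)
The paper gives no argument for this statement; it is quoted from \cite[Prop.~4.9]{AM24}. Taken on its own, your outline has two genuine gaps.

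\textbf{Step~1 is false as stated, even for embedded cycles, so Step~2 does not follow.} Take \(F\) free on \(a,b\) and \(G=\langle a,b,q_1,q_2\mid q_1aq_1^{-1}=b^2,\ q_2b^2q_2^{-1}=a^3\rangle\). The \(2\)-cycle of \(\Gamma\) based at \(a\) has \(\operatorname{lp}(\gamma)=2\) and \(\operatorname{lp}(\overline{\gamma})=6\). However, \(h=q_2q_1\) satisfies \(hah^{-1}=a^3\), so \(\langle a,h\rangle\) is a quotient of \(BS(1,3)\), hence residually finite, and gives no contradiction. The obstruction lies elsewhere: eliminating \(a\) gives \(G\simeq\langle b,q_1q_2\rangle*\langle q_1\rangle\) with \(\langle b,q_1q_2\rangle\simeq BS(2,6)\).

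In general the subgroup is \(BS(k,k')\), where \(k\) is the least exponent for which all successive pinches go through. Only the ratio \(k'/k=\operatorname{lp}(\overline{\gamma})/\operatorname{lp}(\gamma)\) is determined by the cycle; \(k\) itself depends on the base vertex. To prove Step~2 you must choose the base vertex so that its first outgoing label, in the direction of smaller \(|\operatorname{lp}|\), is not \(\pm1\); this forces \(1<|k|<|k'|\). You then still need Britton's lemma to check that no further relations hold. For closed paths that are not cyclically reduced, the loop products say even less about the subgroup.

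\textbf{The degenerate case of Step~3 is left open, and your theta-graph strategy cannot close it.} Let \(\Gamma\) have vertices \(x,y\) and edges coming from \(q_1xq_1^{-1}=y\), \(q_2x^Nq_2^{-1}=y\), \(q_3xq_3^{-1}=y\), with \(N>1\). Up to sign, the three embedded cycles have loop products \(\{1,N\}\), \(\{1,1\}\), \(\{1,N\}\). Each is compatible with Step~2, and no comparison among them yields a contradiction, although the component has two independent cycles.

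The missing idea is to play the two relations off each other on one cyclic subgroup, along a closed path that is neither embedded nor cyclically reduced. With \(h=q_2^{-1}q_1\) and \(t=q_3^{-1}q_1\) one has \(hxh^{-1}=x^N\) and \(txt^{-1}=x\). So \(z=h^2th^{-1}\) satisfies \(zx^Nz^{-1}=x^{N^2}\), while Britton's lemma shows \(zx^jz^{-1}\notin\langle x\rangle\) for \(N\nmid j\). Hence \(\langle x,z\rangle\simeq BS(N,N^2)\), which is not residually finite.

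The same device handles the general degenerate case. If \(\operatorname{lp}(\gamma)=\pm1\), each vertex \(c\) of \(\gamma\) gives \(hch^{-1}=c^{\pm N}\). The element carried by \(\delta\) can be transported to \(c\) along \(\pi\), all of whose labels are \(\pm1\), giving \(tct^{-1}=c^{\pm1}\). But this is exactly the kind of path for which your Step~1 provides nothing, so the Britton computation must be carried out explicitly. Your one-sentence treatment of the disjoint case points in this direction without supplying it.
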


If \(C\) is a non-clean component and \(\gamma\) is the cycle from the previous proposition, we denote by \(\operatorname{lp}(C)\) the loop-product \(\operatorname{lp}(\gamma)\) or \(\operatorname{lp}(\overline{\gamma})\) that is different from \(\pm 1\). Observe that tehre exists an edge \(e_j\)   of \(Y\) such that \(\iota_{e_j}(\G(e_j)) \) is congugate to \(  \langle u_j^{m_j} \rangle\) in $\G(u)$. 
Let \(C\) be the connected component of \(u_j\) in \(\Gamma\). We define the following sets of prime numbers:
\[P(C) = P(e_j) = \begin{cases}
    \varnothing\,,\text{ if }C\text{ is clean}\,,\\
    \{p \text{ prime}\colon p \mid \operatorname{lp}(C)\}\,,\text{ if }C \text{ is not clean}\,.
\end{cases}\]

If \(P\) is any set of prime numbers, we define \[\widehat{\bbz_P} = \prod_{p \in P} \bbz_p\,.\] The complementary set of primes is \(P^c\), and we have \(\widehat{\bbz} \simeq \widehat{\bbz_P} \times \widehat{\bbz_{P^c}}\). For any element \(g\) of a torsion-free profinite group, one has \(\overline{\langle g \rangle} \simeq \prod_{p \in Q} \bbz_p \leq \widehat{\bbz}\) for some set \(Q\) of primes, and we denote by \(g_P\) the projection of \(g\) to \(\widehat{\bbz_P}\). We are now ready to describe the structure of \(\widehat{G}\).

\begin{teo}\label{profinitecompletion}
Let \(G = \pi(Y, \G)\), where \(Y\) has a unique vertex \(u\), \(F = \G(u)\) is a finitely generated free group, and each \(\G(e)\) is cyclic. Let \(A\) and \(\Gamma\) be as before. 
Then, for each \(e \in E(Y)\), we have \(\overline{\G(e)} \simeq \widehat{\bbz_{P(e)^c}}\), and \(\overline{\G(u)} = \widehat{\G(u)}/N\) where \(N\) is the closed normal subgroup of  \(\widehat{\G(u)}\) generated by \[\{a_{P(C)} : a \in A, \, C \text{ is the component of } a \text{ in } \Gamma\}\,.\]
\end{teo}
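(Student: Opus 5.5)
Two inclusions have to be proved. First, every \(a_{P(C)}\) dies in \(\widehat{G}\), which gives a surjection \(\widehat{\G(u)}/N \to \overline{\G(u)}\). Second, this surjection is injective, which I would show by building enough finite quotients of \(G\).

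\emph{Step 1: the elements \(a_{P(C)}\) are trivial in \(\widehat G\).} Suppose \(C\) is not clean. By \cref{eulre0} it contains a unique embedded cycle \(\gamma\) with, say, \(\operatorname{lp}(\overline\gamma)=\pm1\) and \(\operatorname{lp}(\gamma)=n\), \(|n|>1\). Composing the conjugations along \(\gamma\) gives a vertex \(a\) of \(C\) and an element \(h\in G\) with \(h a^{n} h^{-1} = a^{\pm 1}\), up to replacing \(a\) by a conjugate. In any finite quotient \(G/K\), the image \(\bar a\) is conjugate to \(\bar a^{\pm n}\), so \(\bar a\) and \(\bar a^{n}\) have the same order. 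Hence that order is coprime to every \(p\mid n\). Passing to the limit, the closure \(\overline{\langle a\rangle}\) in \(\widehat G\) has no \(\bbz_p\)-factor for \(p\in P(C)\), that is, \(a_{P(C)}=1\) in \(\widehat G\).

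For any other vertex \(b\) of \(C\), walk along a path in \(C\) from the cycle to \(b\). Each edge identifies a conjugate of \(a'^{m}\) with \(b'^{m'}\). Because \(\overline{\langle a'\rangle}\) is a procyclic group with no \(P(C)\)-part, the same holds for \(\overline{\langle a'^{m}\rangle}\), hence for \(\overline{\langle b'^{m'}\rangle}\), and hence for \(\overline{\langle b'\rangle}\): a procyclic group and its finite-index open subgroup \(\overline{\langle b'^{m'}\rangle}\) contain the same \(\bbz_p\) factors. By induction, \(b_{P(C)}=1\). The same argument shows that each \(\overline{\G(e)}=\overline{\langle u_j^{m_j}\rangle}\) is a quotient of \(\widehat{\bbz_{P(e)^c}}\).

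\emph{Step 2: the reverse inequality.} I must show that \(\widehat{\G(u)}/N\) embeds, equivalently that every finite quotient \(\phi\colon F\to Q\) in which the images of the \(a\in A\) have orders coprime to \(P(C)\) (for their component \(C\)) is dominated by a finite quotient of \(G\). I would also need to arrange that the orders of the \(\phi(u_j^{m_j})\) can be made to exhaust \(\widehat{\bbz_{P(e)^c}}\).

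My construction would first enlarge \(Q\), using residual finiteness of \(G\) and the description in \cite{AM24}, so that on each component \(C\) the orders \(o(a)\) of the images of \(a\in V(C)\) satisfy the compatibility \(o(u_j)/\gcd(o(u_j),m_j)=o(v_j)/\gcd(o(v_j),n_j)\) along every edge. For clean components this can be solved with arbitrary prescribed divisibility, since balanced cycles make the ratio constraints consistent. For non-clean components it can be solved exactly when the orders avoid the primes dividing \(\operatorname{lp}(C)\). Once the orders match, the HNN relations can be realized in a finite group: take the wreath-product/permutation construction of \cite{AM24} (or Wise-style completion of covers), where the cyclic subgroups \(\langle\phi(u_j^{m_j})\rangle\) and \(\langle\phi(v_j^{n_j})\rangle\) have equal orders and act freely on a finite set \(\Omega\) extending the regular action of \(Q\). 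Then choose permutations \(q_j\) conjugating one to the other. This yields a finite quotient of \(G\) restricting to \(\phi\) on \(F\).

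The \emph{main obstacle} is this last step. One has to find orders solving the edge-compatibility equations on each component with prescribed divisibility, and then produce a finite \(F\)-set that is free on the relevant cyclic subgroups. Malnormality of \(A\) and small cancellation quotients (\cref{bigpowers}) help to choose \(Q\) in which the \(a\in A\) have independent, controllable orders, starting from \(F/\langle\!\langle a^{n_a}\rangle\!\rangle\) with \(n_a\) chosen component-wise and residual finiteness from \cref{c16}. Combining the two steps gives both isomorphisms in the statement.
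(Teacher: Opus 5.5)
\emph{Overall.} Your architecture is the paper's: first show \(N\subseteq\ker(\widehat F\to\overline F)\), then show every finite quotient of \(\widehat F/N\) is dominated by the image of \(F\) in a finite quotient of \(G\), via the \(C'(1/6)\) quotients \(F/\langle\!\langle a^{n_a}\rangle\!\rangle\).

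\textbf{Gap in Step~1.} The gap is in the passage along tree edges of a non-clean component. Suppose \(a'^{m}\) is conjugate to \(b'^{m'}\) and \(\overline{\langle a'\rangle}\) has trivial \(P(C)\)-part. Then you only learn that the \(P(C)\)-primary part of \(\overline{\langle b'\rangle}\) is \emph{finite}, of order dividing the \(P(C)\)-part of \(m'\). Having no \(\bbz_p\)-factors is not the same as \(b'_{P(C)}=1\). You also cannot appeal to torsion-freeness of \(\widehat G\): the paper only obtains that later from \cref{goodness}, which itself rests on this theorem.

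The shape of \(C\) given by \cref{eulre0} is not enough. Take \(\langle x,y,q,q'\mid qxq^{-1}=x^{3},\ q'x^{2}q'^{-1}=y^{3}\rangle\). The component \(\{x,y\}\) has a unique cycle with loop products \(1\) and \(3\), so \(P(C)=\{3\}\). Yet the map onto \(\bbz/3\) with \(y\mapsto 1\) and \(x,q,q'\mapsto 0\) shows that the \(3\)-primary component of \(y\) survives in \(\widehat G\).

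What saves the theorem is residual finiteness, used once more. In this example \(h=q'qq'^{-1}\) satisfies \(hy^{3}h^{-1}=y^{9}\), and Britton's lemma gives \(\langle y,h\rangle\cong BS(3,9)\), so the group is not residually finite. In general you must show that residual finiteness forces the label at the outer end of every tree edge of a non-clean component to be \(\pm1\). A label \(\beta\) with \(|\beta|>1\) combines with the relation \(x\sim x^{\pm n}\) at the inner vertex to give \(gb^{\beta}g^{-1}=b^{\pm n\beta}\), a \(BS(\beta,\pm n\beta)\)-configuration (cf.\ \cref{res-free}). Once outer labels are \(\pm1\), every vertex of \(C\) is conjugate to a power of a cycle vertex, and \(b_{P(C)}=1\) follows. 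The paper simply calls \(N\subseteq\ker\) clear, so you have to supply this yourself.

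\textbf{Step~2.} This is an outline with the right ingredients. Your edge condition \(o(u_j)/\gcd(o(u_j),m_j)=o(v_j)/\gcd(o(v_j),n_j)\) is the correct one. The paper instead asks for a single order \(m_C\) on each component, which does not give \(|\langle\bar u_j^{m_j}\rangle|=|\langle\bar v_j^{n_j}\rangle|\) in general; take \(qx^{2}q^{-1}=y^{4}\) with \(4\mid m_C\).

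To complete the argument:
\begin{enumerate}
\item Drop the idea of enlarging \(Q\) via residual finiteness of \(G\). That only produces quotients of \(\overline F\), which is exactly what is being determined.
\item Choose the exponents \(n_a\) vertex by vertex, using a potential function on \(p\)-adic valuations. For clean \(C\), consistency around cycles is exactly balancedness. For non-clean \(C\), set all valuations at \(p\in P(C)\) to \(0\); for other primes, the unique cycle has zero net label valuation.
\item This yields \(n_a\) that are compatible along edges, divisible by the order of \(a\) in \(Q\), prime to \(P(C)\), and larger than the constant of \cref{bigpowers}.
\item \cref{small}(2) then gives order exactly \(n_a\) in \(F/\langle\!\langle a^{n_a}\rangle\!\rangle\), and \cref{c16} gives a finite quotient \(Q'\to Q\) retaining these orders.
\item Your permutation construction on \(Q'\) with its regular action then produces the finite quotient of \(G\). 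This is a fine substitute for the paper's use of residual finiteness of a finite graph of finite groups.
\end{enumerate}
Letting the \(n_a\) range over all admissible values also yields \(\overline{\G(e)}\cong\widehat{\bbz_{P(e)^c}}\).
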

\begin{proof} We have a canonical map \(\phi\colon \widehat{F} \to \overline{F}\), and \(N\) is clearly contained in its kernel. In order to prove the theorem, we need to show:
\begin{enumerate}
    \item \(N = \ker \phi\), and
    \item for every \(a \in A\) with connected component \(C\) in \(\Gamma\), the closure of \(\langle a \rangle\) in \(\overline{F}\) is isomorphic to \(\widehat{\bbz_{P(C)^c}}\).
\end{enumerate}

Let \(F_0\) be a finite quotient of \(F\) such that, for each connected component \(C\) of \(\Gamma\), the order of the images of \(a \in C\) is the same and coprime with all primes in \(P(C)\). We can construct a graph of groups \((Y,\G_0)\), where \(\G_0(u) = F_0\) and for each \(e \in E(Y)\), \(\G_0(e)\) is isomorphic to the image of \(\iota_e(\G(e))\) in \(F_0\) (which, by our condition, is also isomorphic to the image of \(t_e(\G(e))\) in \(F_0\)). The group \(\pi(Y,\G_0)\) is a quotient of \(G\), and since \(\pi(Y,\G_0)\) is residually finite as \((Y,\G_0)\) is a graph of finite groups, it follows that \(F_0\) is a quotient of \(\overline{F}\).

Now let \(F_1\) be an arbitrary finite quotient of \(\widehat{F}/N\). By \cref{bigpowers}, we can find a collection \(\{m_C\}\) where \(C\) runs through the connected components of \(\Gamma\) such that:
\begin{enumerate}
    \item the order of the image of each \(a\in A\) in \(F_1\) divides \(m_a =m_C\), where \(C\) is the connected component containing \(a\);
    \item the group presentation \(F_2 = F/\langle\!\langle a^{m_a} \colon a \in A \rangle\!\rangle\), satisfies the \(C'(1/6)\) small cancellation condition.
\end{enumerate}
Observe that by \cref{c16}, \(F_2\) is residually finite.

By \cref{small}, the order of the image of \(a\) in \(F_2\) is exactly \(m_a\). Since \(F_2\) is residually finite, there exists a finite quotient \(F_3\) of \(F\) such that \(F_1\) is a quotient of \(F_3\) and for any \(a \in A\) the order of \(a\) in \(F_3\) is precisely \(m_a\). From the earlier argument, \(F_3\) is a quotient of \(\overline{F}\), and so \(F_1\) is also a quotient of \(\overline{F}\). We conclude that \(\overline{F} \simeq \widehat{F}/N\). Observe that our previous argument also shows that, for each connected component \(C\) of \(\Gamma\) and each \(a\in C\), the closure of \(\langle a \rangle\) in \(\overline{F}\) is isomorphic to \(\widehat{\bbz_{P(C)^c}}\).
\end{proof}

In the next proposition, we describe some properties of the group \(\overline{\G(u)}\) from the previous theorem. For \(a \in F\), we recall that the Fox derivative \(\frac{\partial a}{\partial x_i} \in \bbz[F]\) is the unique element satisfying \[a - 1 = \frac{\partial a}{x_1}(x_1-1) + \cdots + \frac{\partial a}{\partial x_d}(x_d - 1)\,.\]

\begin{pro}\label{profpresent} Let \(F\) be a free group on free generators \(x_1\,,\ldots\,,x_d\), let \(A\) be a finite malnormal collection of elements in \(F\) and let \(\{P(a)\colon a \in A\}\) be a collection of proper subsets of the prime numbers. Let \(G = \widehat{F}/N\), where \(N\) is the closed normal subgroup of \(\widehat{F}\) generated by the elements \(a_{P(a)}\) for \(a \in A\) and define \(A_p = \{a \in A \colon p \in P(a)\}\).

Then, the closed subgroup generated by \(a\) in \(G\) is isomorphic to \(\widehat{\bbz_{P(a)^c}}\). Moreover, for each prime \(p\), we have the following exact sequence:
\begin{equation}\label{asphericity}
0 \to \bigoplus_{a \in A_p} \FFalgebra{G/\overline{\langle a \rangle}} \overset{\tau_G}{\to} \FFalgebra{G}^d \to I_{\FFalgebra{G}} \to 0
\end{equation}
where \[\tau_G(x) = x\cdot \mathrm{tr}_{\overline{a}} \cdot \left(\frac{\partial a}{\partial x_1}\,,\cdots\,,\frac{\partial a}{\partial x_d}\right)\] for \(x \in \FFalgebra{G/\overline{\langle a \rangle}}\) and \(\mathrm{tr}_{\overline{\langle a \rangle}}\) the trace element of \(\FFalgebra{\overline{\langle a \rangle}}^{\overline{\langle a \rangle}}\). In particular, for each prime \(p\), the following holds:
\begin{enumerate}[(i)]
    \item \(\cd_p(G) \leq 2\);
    \item \(\cd_p(G) = 1\) for all primes \(p\) that are not contained in any \(P(a)\);
    \item \(\dim_{\bbf_p} \HH^1_{\mathrm{ct}}(G,\bbf_p) - \dim_{\bbf_p} \HH^2_{\mathrm{ct}}(G,\bbf_p) = d - |A_p|\).
\end{enumerate}
\end{pro}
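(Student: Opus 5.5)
The plan is to approximate \(G\) by residually finite small cancellation quotients of \(F\), use their combinatorial asphericity, and pass to an inverse limit.

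\textbf{Step 1: the approximating quotients.} Write \(G = \varprojlim G_1\) over finite quotients \(G_1\) of \(G\). For each such \(G_1\), choose exponents \(m_a\) with the following properties:
\begin{enumerate}
    \item the order of the image of \(a\) in \(G_1\) divides \(m_a\);
    \item \(m_a\) is coprime to every prime of \(P(a)\);
    \item \(m_a\) exceeds the constant of \cref{bigpowers}.
\end{enumerate}
Condition (2) is possible because \(a_{P(a)}\) dies in \(G\), so the image of \(a\) in \(G_1\) has order coprime to \(P(a)\). Set \(F_2 = F/\langle\!\langle a^{m_a}\colon a \in A\rangle\!\rangle\). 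This is a \(C'(1/6)\) group, residually finite and cohomologically good by \cref{c16}, and \(a\) has order exactly \(m_a\) in \(F_2\) by \cref{small}. Exactly as in the proof of \cref{profinitecompletion}, one gets a cofinal system of finite quotients \(F_3\) of \(F_2\) which are quotients of \(G\) and in which \(a\) has order \(m_a\). Letting \(m_a\) range over all integers coprime to \(P(a)\) then gives \(\overline{\langle a\rangle} \simeq \widehat{\bbz_{P(a)^c}}\) in \(G\).

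\textbf{Step 2: the exact sequence at finite level.} Tensor the resolution \eqref{resolution} for \(F_2\) with \(\bbf_p\). Every term is free or a permutation module, so it stays exact because it splits over \(\bbz\). By cohomological goodness, passing to completions \(\widehat{F_2}\) keeps the homology correct. This gives the exact sequence
\[0\to \bigoplus_a \FFalgebra{\widehat{F_2}/\langle a\rangle} \to \FFalgebra{\widehat{F_2}}^d \to I \to 0,\]
where the summand \(\FFalgebra{\widehat{F_2}/\langle a\rangle}\) enters through the norm element \(1+a+\cdots+a^{m_a-1}\).

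When \(p \nmid m_a\), the norm element equals \(m_a\,\mathrm{tr}\), so it is a unit multiple of the trace. When \(p \mid m_a\), i.e.\ \(a \notin A_p\) (by condition (2) of Step 1, \(p \notin P(a)\) can occur with \(p \mid m_a\)), choose the \(m_a\) along the inverse system with \(p\)-adic valuation tending to infinity. Then the summand indexed by \(a\) is killed by the transition maps, which become zero on it. I expect this to be the main obstacle: the norm map \(\FFalgebra{-/\langle a^{p^k}\rangle} \to \FFalgebra{-/\langle a^{p^{k'}}\rangle}\) should vanish, and one needs a Mittag-Leffler type argument, via compactness, to keep exactness in the limit.

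\textbf{Step 3: the inverse limit.} Take the inverse limit over the system of Step 1. Inverse limits of profinite modules are exact, and since \(\widehat{F_2}\) maps onto a cofinal family of quotients of \(G\), the limit is \eqref{asphericity}. The trace is functorial, as noted after \cref{lem-splitting-augmentation}, so the maps \(\tau\) assemble into \(\tau_G\).

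\textbf{Step 4: the consequences.}
\begin{enumerate}[(i)]
    \item The modules \(\FFalgebra{G/\overline{\langle a\rangle}}\) with \(a \in A_p\) are projective, because \(p \nmid |\overline{\langle a\rangle}|\), using \cref{lem-splitting-augmentation}. Splicing with \(0\to I\to \FFalgebra{G}\to\bbf_p\to 0\) therefore gives a projective resolution of length \(2\), so \(\cd_p(G)\le 2\).
    \item If \(p\) lies in no \(P(a)\), then \(A_p = \varnothing\), so \(I\) is free and \(\cd_p(G)\le 1\). Equality holds since \(G\) is infinite.
    \item Apply \(\Hom(-,\bbf_p)\) to the resolution. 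Each \(G\)-fixed-point space has dimension \(1\), and the differential out of \(\FFalgebra{G}\) vanishes. The Euler characteristic computation, \(1 - d + |A_p|\) equal to \(\dim\HH^0 - \dim\HH^1 + \dim\HH^2\), then gives (iii).
\end{enumerate}
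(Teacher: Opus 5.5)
Your proposal follows essentially the paper's proof. The paper fixes the concrete exponents \(n_{a,i}\) (the largest divisor of \(i!\) prime to \(P(a)\)) and completes the \(\bbf_p\)-resolutions of the \(C'(1/6)\) groups \(\langle x_1,\ldots,x_d\mid a^{n_{a,i}}\rangle\) using goodness. It notes that the transition maps on the \(a\)-summands are multiplication by \(n_{a,i}/n_{a,j}\), so these summands vanish in the limit when \(p\notin P(a)\), and become the natural projections after rescaling to the trace when \(p\in P(a)\); it then takes the inverse limit, exactly as you do.

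There are two minor fixes. First, choose your \(m_a\) along such a divisibility chain, so that the groups \(F/\langle\!\langle a^{m_a}\rangle\!\rangle\) genuinely form an inverse system. Second, in (ii), deduce \(\cd_p(G)\geq 1\) from \(\HH^1_{\mathrm{ct}}(G,\bbf_p)\cong\bbf_p^d\neq 0\) (which the resolution gives when \(A_p=\varnothing\)), rather than from \(G\) being infinite, since infinite pro-\(p'\) groups have \(\cd_p=0\).
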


\begin{proof} The fact that the closed subgroup generated by \(a \in A\) in \(G\) is isomorphic to \(\widehat{\bbz_{P(a)^c}}\) can be established as in the proof of the previous theorem. Let \(n_{a,i}\) be the largest natural number dividing \(i!\) that is not divisible by any prime in \(P(a)\). Define
\[\Gamma_i = \langle x_1\,,\ldots\,, x_d \mid a^{n_{a,i}} = 1\,, a \in A\rangle\] and \(G_i = \widehat{\Gamma}_i\).

Observe that \(G \simeq \varprojlim G_i\). By \cref{bigpowers}, when \(j\) is sufficiently large, the presentation of the group \(\Gamma_j \) satisfies the \(C'(1/6)\) condition, and so, by \cref{c16}, \(\Gamma_j\) is residually finite and cohomologically good. Thus, using \cref{small}, for all sufficiently large \(i > j\), we obtain the commutative diagram
\[\begin{tikzcd}
0 \arrow[r] & \bigoplus_{a \in A} \Falgebra{\Gamma_i/\langle a \rangle} \arrow[d, "{\gamma_{i,j}}"'] \arrow[r, "\tau_i"] & \Falgebra{\Gamma_i}^d \arrow[d] \arrow[r] & \Falgebra{\Gamma_i} \arrow[d] \arrow[r] & \bbf_p \arrow[d, "\simeq"] \arrow[r] & 0 \\
0 \arrow[r] & \bigoplus_{a \in A} \Falgebra{\Gamma_j/\langle a \rangle} \arrow[r, "\tau_j"']                              & \Falgebra{\Gamma_j}^d \arrow[r]           & \Falgebra{\Gamma_j} \arrow[r]           & \bbf_p \arrow[r]                     & 0
\end{tikzcd}
\]
where the maps \(\tau_i\), \(\tau_j\) and \(\gamma_{i,j}\) can be described as follows: \(\tau_i\) sends an element \(x \in \Falgebra{\Gamma_i/\langle a \rangle}\) to \[x\cdot (a^{n_{a,i}-1} + \cdots + 1)\cdot \left(\frac{\partial a}{\partial x_1}\,,\ldots\,, \frac{\partial a}{\partial x_d}\right)\] and \(\gamma_{i,j}\) sends \(x\) to \begin{equation}\label{eqgamma} x\cdot \frac{a^{n_{a,i}}-1}{a^{n_{a,j}}-1} = \frac{n_{a,i}}{n_{a,j}}\cdot x \in \Falgebra{\Gamma_j/\langle a \rangle}\,.\end{equation}

Since the \(\Gamma_i\) and \(\Gamma_j\) are cohomologically good and the modules \(\Falgebra{\Gamma_i/\langle a \rangle}\) and \(\Falgebra{\Gamma_j/\langle a \rangle}\) admit a resolution by finitely generated free modules, by \cite[Thm. 7.3.12]{Wilkes} we have the following commutative diagram:
\[
\begin{tikzcd}
0 \arrow[r] & \bigoplus_{a \in A} \FFalgebra{G_i/\overline{\langle a \rangle|}} \arrow[d, "{\gamma_{i,j}}"'] \arrow[r, "\tau_i"] & \FFalgebra{G_i}^d \arrow[d] \arrow[r] & \FFalgebra{G_i} \arrow[d] \arrow[r] & \bbf_p \arrow[d, "\simeq"] \arrow[r] & 0 \\
0 \arrow[r] & \bigoplus_{a \in A} \FFalgebra{G_j/\overline{\langle a \rangle}} \arrow[r, "\tau_j"']                              & \FFalgebra{G_j}^d \arrow[r]           & \FFalgebra{G_j} \arrow[r]           & \bbf_p \arrow[r]                     & 0
\end{tikzcd}
\]

From the maps $\gamma_{i,j}$, we can construct the $\FFalgebra{G}$-module \(\varprojlim \FFalgebra{G_i/\overline{\langle a \rangle}}\), and by \cref{eqgamma} the resulting module is \(\{0\}\) if \(p \not\in P(a)\) and is isomorphic to \(\FFalgebra{G/\overline{\langle a \rangle}}\) otherwise. Moreover, when \(p \in P(a)\), we can twist the maps \(\tau_i\) by a factor of \(n_{a,i}^{-1} \pmod p\) and \(\gamma_{i,j}\) by a factor of \(n_{a,j}\cdot n_{a,i}^{-1} \pmod{p}\) on each summand \(\Falgebra{\Gamma_i/\langle a\rangle}\) to ensure they satisfy \[\tau_i(x) = x\cdot \mathrm{tr}_{\overline{\langle a \rangle}} \cdot \left(\frac{\partial a}{\partial x_1}\,,\cdots\,,\frac{\partial a}{\partial x_d}\right)\,.\] This prove the existence of \cref{asphericity}.

Consider now \(a\) as an element of \(G\). If \(p \in P(a)\), then since \(\overline{\langle a \rangle} \simeq \widehat{\bbz_{P(a)^c}}\), \(\bbf_p\) is a projective profinite \(\FFalgebra{\overline{\langle a \rangle}}\)-module, and hence \[\FFalgebra{G/\overline{\langle  a\rangle}} \simeq \FFalgebra{G} \widehat{\otimes}_{\FFalgebra{\overline{\langle a \rangle}}} \bbf_p = \widehat{^G \bbf_p}\] is a projective profinite \(\FFalgebra{G}\)-module. Therefore, \(\cd_p(G) \leq 2\). The statement (ii) is clear, and in order to show (iii) we apply \(\Hom_{\FFalgebra{G}}(-,\bbf_p)\) to \cref{asphericity} to obtain the exact sequence \[0 \to \HH^1_{\mathrm{ct}}(G,\bbf_p) \to \bbf_p^d \to \bigoplus_{\substack{a \in A\\ p \in P(A)}} \bbf_p \to \HH^2_{\mathrm{ct}}(G,\bbf_p) \to 0\,.\] This implies the desired equality.
\end{proof}

The previous theorem suggests the natural question of whether, for a given finite malnormal family \(A\subset F\) and a given collection of proper subsets of primes \(\{P(a)\}_{a \in A}\), we can decide if \(\cd_p(G)\) is \(2\) or less than \(2\). Consider, for example, the situation when \(A\) consists of a single element \(a\) in the free group \(F = \langle x_1\,,\ldots\,, x_d \mid \varnothing\rangle\) and \(P\) is some proper subset of the prime numbers. We put \[G = \widehat{F}/\langle\!\langle a_P\rangle\!\rangle\,,\] and we consider the following statements where each one implies the next:
\begin{enumerate}[(a)]
    \item The element \(a\) is primitive in \(F\).
    \item There exists a free profinite subgroup \(H\) of \(G\) of rank \(d-1\) such that \[G = H \amalg \overline{\langle a \rangle}\].
    \item \(G\) is projective.
\end{enumerate}

The condition \textup{(b)} has the following equivalent formulation.

\begin{pro}
The condition \textup{(b)} is equivalent to:
\begin{enumerate}
    \item[(b')] For any finite group \(M\) and any element \(g \in M\) with \(o(g)\) coprime with all primes in \(P\), the number of homomorphisms \(\varphi \colon G \to M\) such that \(\varphi(a) = g\) is exactly \(|M|^{d-1}\).
\end{enumerate}
\end{pro}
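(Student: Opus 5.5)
Both directions go through counting homomorphisms into finite groups. Throughout, write \(C = \overline{\langle a \rangle} \leq G\); by \cref{profpresent}, \(C \simeq \widehat{\bbz_{P^c}}\).

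\emph{(b) implies (b').} Assume \(G = H \amalg C\) with \(H\) free profinite of rank \(d-1\). By the universal property of free profinite products, a continuous homomorphism \(\varphi\colon G \to M\) is the same as a pair \((\varphi|_H, \varphi|_C)\).
\begin{enumerate}
\item The restriction \(\varphi|_H\) is arbitrary, giving \(|M|^{d-1}\) choices.
\item The restriction \(\varphi|_C\) is determined by \(\varphi(a)\). A homomorphism \(C \to M\) with \(a \mapsto g\) exists, and is unique, exactly when \(o(g)\) is coprime to every prime in \(P\).
\end{enumerate}
Hence the number of \(\varphi\) with \(\varphi(a) = g\) is \(|M|^{d-1}\). (Every homomorphism from the finitely generated profinite group \(G\) to a finite group is continuous, so counting abstract or continuous homomorphisms gives the same number.)

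\emph{(b') implies (b).} The plan is to recognise \(G\) through its finite quotients.
\begin{enumerate}
\item Let \(\Phi = \widehat{F_{d-1}} \amalg \widehat{\bbz_{P^c}}\) and let \(c\) be the generator of the second factor.
\item The argument for the first direction shows that \(\Phi\) satisfies (b') with \(c\) in place of \(a\).
\item So the finitely generated profinite pairs \((G,a)\) and \((\Phi,c)\) have the same count \(|\Hom((\cdot,\cdot),(M,g))|\) for every finite group \(M\) and every \(g \in M\). When \(o(g)\) is not coprime to \(P\), both counts are \(0\): in \(G\) and in \(\Phi\) the element \(a\) (resp. \(c\)) generates a group of order coprime to \(P\).
\item Run the standard Möbius/inclusion–exclusion over subgroups, as in the argument that equal numbers of homomorphisms to all finite groups force equal numbers of epimorphisms. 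This gives equal numbers of epimorphisms \((G,a) \twoheadrightarrow (M,g)\) and \((\Phi,c) \twoheadrightarrow (M,g)\): an epimorphism is a homomorphism landing in no proper subgroup, and for a proper subgroup \(M' \leq M\) containing \(g\) the count is again \(|M'|^{d-1}\) on both sides.
\item Divide by \(|\Aut(M,g)|\). Then the pairs have the same set of normal subgroups of each isomorphism type of pointed finite quotient. By the usual argument for finitely generated profinite groups (finitely many open normal subgroups of each index, then a compactness/inverse-limit argument), this yields an isomorphism \(G \to \Phi\) sending \(a\) to \(c\).
\item Taking \(H\) to be the preimage of \(\widehat{F_{d-1}}\) gives (b).
\end{enumerate}

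The main obstacle is this last rigidity step for pointed groups. I would carry it out as follows. For each \(n\), the set \(Q_n\) of pointed quotients of order at most \(n\), up to isomorphism, is finite and determined by the counts. Let \(N_n\) be the intersection of all open normal subgroups of index at most \(n\). Then \((G/N_n, a)\) and \((\Phi/\Phi_n, c)\) each surject onto the other as pointed groups, and being finite they are isomorphic. The nonempty finite sets of such pointed isomorphisms form an inverse system, and its inverse limit gives the required isomorphism.
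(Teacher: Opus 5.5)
Your proof is correct, but for (b') $\Rightarrow$ (b) it takes a different route from the paper.

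\textbf{The paper's route} has two steps. First it quotes the free-factor criterion \cite[Prop. 3.1]{garridoFreeFactorsProfinite2023}: $\overline{\langle a\rangle}$ is a free factor iff the number of extensions to $G$ of a homomorphism $\overline{\langle a\rangle}\to M$ does not depend on the homomorphism. By (b') this number is always $|M|^{d-1}$, which gives $G=H\amalg\overline{\langle a\rangle}$ with $|\Hom(H,M)|=|M|^{d-1}$. Then it counts epimorphisms and applies the Dixon--Formanek--Poland--Ribes theorem to identify $H$ with the free profinite group of rank $d-1$.

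\textbf{Your route} compares the pointed group $(G,a)$ directly with the model $(\widehat{F_{d-1}}\amalg\widehat{\bbz_{P^c}},c)$. You do the M\"obius inversion over subgroups containing $g$, divide by $|\Aut(M,g)|$, and prove a pointed version of the Dixon--Formanek--Poland--Ribes theorem. The free-product decomposition then comes for free by pulling back the two factors along the pointed isomorphism.

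\textbf{Comparison.} Your argument is self-contained and bypasses the free-factor criterion entirely, and it yields an explicit isomorphism $G\to\Phi$ with $a\mapsto c$. The cost is that you redo the rigidity theorem in the pointed setting. The paper's argument is shorter because it quotes both ingredients as they stand.

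\textbf{Details to make explicit in the last step.}
\begin{enumerate}
\item $G/N_n$ is residually of order at most $n$, and this is a property of the abstract pointed group. So the kernel of any pointed surjection $\Phi\to G/N_n$ is an intersection of open normal subgroups of index at most $n$, and hence contains $\Phi_n$. This is what gives the mutual pointed surjections.
\item $N_n/N_{n+1}$ is the intersection of all normal subgroups of index at most $n$ in $G/N_{n+1}$, and similarly for $\Phi$. Hence every pointed isomorphism $G/N_{n+1}\to\Phi/\Phi_{n+1}$ descends to one at level $n$, so the sets of pointed isomorphisms really form an inverse system.
\item The $N_n$ are cofinal among the open normal subgroups of $G$, so $G=\varprojlim G/N_n$, and likewise for $\Phi$.
\end{enumerate}
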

\begin{proof} It is clear that (b) implies (b'), so assume (b') holds. It was shown in \cite[Prop. 3.1]{garridoFreeFactorsProfinite2023} that \(\overline{\langle a \rangle}\) is a profinite free factor of \(G\) if and only if for every finite group \(M\) and any homomorphism \(\gamma\colon \overline{\langle a \rangle} \to M\) the number of extensions \(\widetilde{\gamma}\colon G \to M\) is independent of \(\gamma\). By (b'), this number is constant and equal to \(|M|^{d-1}\), so there exists a closed subgroup \(H\) of \(G\) such that \(G = H \amalg \overline{\langle a \rangle}\) and \[\left |\mathrm{Hom}(H,M)\right | = |M|^{d-1}\] for every finite group \(M\). We must show that this implies that \(H\) is a free profinite group of rank \(d-1\).

Let \(\mathrm{Epi}(H,M)\) denote the set of epimorphisms from \(H\) onto a finite group \(M\), and let \(L\) be the free profinite group on \(d-1\) generators. Observe that \(|\mathrm{Hom}(L,M)| = |\mathrm{Hom}(H,M)| = |M|^{d-1}\). We claim that the equality \(|\mathrm{Epi}(H,M)| = |\mathrm{Epi}(L,M)|\) also holds for every finite group \(M\). Indeed, we have the identity \[\mathrm{Hom}(H,M) = \bigcup_{N \leq M} \mathrm{Epi}(H,N)\,,\] so proceeding by induction on \(|M|\) we obtain the desired equality. This implies that the set of finite quotients of \(H\) and \(L\) coincide. Since both are finitely generated profinite groups (with \(H\) being a quotient of \(G\)), they must be isomorphic by a well known result of Dixon, Formanek, Poland and Ribes (see \cite[Thm. 3.2.3]{Wilkes}).
\end{proof}

We propose the following conjecture.

\begin{Conj}[Baby Remeslennikov Conjecture]   
\label{Baby}
The implication \textup{(c)} $\Rightarrow$ \textup{(a)} holds.
\end{Conj}

The relation with the Remeslennikov problem is as follows: \textup{(b)} and a positive solution of Remeslennikov's question in the class of one-relator groups implies \textup{(a)}. Indeed, consider the group \(\Gamma_k = F/\langle\!\langle a^k\rangle\!\rangle\) for \(k > 1\) coprime with all primes in \(P\). By the Newman Spelling Theorem, it is hyperbolic, and by \cite{Wi21} it is virtually compact special. In particular, \(\Gamma_k\) is residually finite and cohomologically good. If \textup{(b)} holds, then \(\widehat{\Gamma_k}\) is virtually free profinite: the closed normal subgroup \(N\) generated by \(H\) is free profinite. Moreover, the Reidemeister-Schreier rewritting process with respect to the transversal \(\{1,a,\ldots,a^{k-1}\}\) shows that \(U = N \cap \Gamma_k\) is a one-relator group whose profinite completion is isomorphic to \(N\). Thus, if the free group is rigid in the class of one-relator groups, \(U\) is free and so \(\Gamma_k\) is virtually free. By \cite{garridoFreeFactorsProfinite2023}, \(\langle a \rangle\) is a free factor of \(\Gamma_k\). Therefore, by \cite[Prop. III.3.7]{LSbook}, \(a\) is primitive.

A possible approach to proving \cref{Baby} consists in extending the work of Wilton~\cite{Wi18}. Assume that \(a\) is not primitive.
 Without loss of generality, we may assume that $a$ is not contained in a proper free factor of \(F\). Let us assume that \(k \geq |a|\) is greater than or equal to the word length of \(a\). Then, by \cite[Thm 5.7]{HW01}, \(\Gamma_k\) is locally quasiconvex. Moreover, by the last corollary of \cite{FKS72} and \cite[Prop. III.3.7]{LSbook} the groups \(\Gamma_k\) are one-ended. The last step would be to show that for some \(k \geq |a|\) and coprime with all primes in \(P\) the group \(\Gamma_k\) contains a surface subgroup. Since \(\Gamma_k\) is locally quasiconvex and virtually compact special, it would virtually retract onto this surface subgroup, producing a non-trivial second cohomology class with coefficients in some finite \(\Falgebra{\Gamma_k}\)-module for some \(p \in P\). However, the following proposition would contradict the projectivity of \(G\):

\begin{pro}\label{powerquotient}
Assume that $G$ is projective. Then \[\cd_p\!\left(\,\widehat{\Gamma_k}\,\right) \leq 1\] for every \(k\) coprime with all primes in \(P\) and \(p \in P\).
\end{pro}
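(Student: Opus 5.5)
Throughout, \(G = \widehat{F}/\langle\!\langle a_P\rangle\!\rangle\) and \(\Gamma_k = F/\langle\!\langle a^k\rangle\!\rangle\) with \(k\) coprime to all primes in \(P\). The plan is to realise \(\widehat{\Gamma_k}\) as a quotient \(G/\langle\!\langle \bar a^{k}\rangle\!\rangle\) and then compute \(\cd_p\) with the exact sequence \cref{asphericity} for \(\widehat{\Gamma_k}\).

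\emph{Step 1: \(\widehat{\Gamma_k}\) is a quotient of \(G\).} In \(\widehat{F}\), the element \(a\) generates a copy of \(\widehat{\bbz} = \widehat{\bbz_P}\times\widehat{\bbz_{P^c}}\), and \(a^k\) has trivial \(P\)-component modulo \(k\)-divisibility. More precisely, since \(k\) is coprime to \(P\), \(k\) is a unit in \(\widehat{\bbz_P}\), so \(a_P \in \overline{\langle a^k\rangle}\). Hence \(\langle\!\langle a_P\rangle\!\rangle \le \langle\!\langle a^k\rangle\!\rangle\), and
\[
\widehat{\Gamma_k} = \widehat{F}/\langle\!\langle a^k\rangle\!\rangle = G/\langle\!\langle \bar a^k\rangle\!\rangle .
\]
Here \(\bar a\) generates \(C=\overline{\langle \bar a\rangle}\simeq \widehat{\bbz_{P^c}}\) in \(G\), and \(\bar a^k\) generates the open subgroup \(kC\) of \(C\).

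\emph{Step 2: compute \(\cd_p\) of the quotient.} Since \(G\) is projective, it has \(\cd_p(G)\le 1\) for every \(p\). For \(p\in P\), the prime \(p\) does not divide \(|C|\). The relevant object is the relation module of \(\widehat{\Gamma_k}\) as a quotient of \(G\). I would use the analogue of \cref{asphericity} for \(\Gamma_k\): by \cref{small} and \cref{c16} (for large \(k\), or via \cite{Wi21} in general), \(\Gamma_k\) is cohomologically good, which gives the exact sequence of profinite modules
\[
0\to \FFalgebra{\widehat{\Gamma_k}/\overline{\langle a\rangle}} \to \FFalgebra{\widehat{\Gamma_k}}^d \to I_{\FFalgebra{\widehat{\Gamma_k}}}\to 0,
\]
with the first map given by \(x\mapsto x\,\mathrm{tr}\,(\partial a/\partial x_i)_i\). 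Since \(p\nmid k\), the first term is projective, so \(\cd_p\le 2\). Thus it suffices to show \(\HH^2_{\mathrm{ct}}(\widehat{\Gamma_k},M)=0\) for simple \(\bbf_p\)-modules \(M\), i.e.\ that the map \(\Hom(\FFalgebra{\widehat{\Gamma_k}}^d,M)\to \Hom(\FFalgebra{\widehat{\Gamma_k}/\overline{\langle a\rangle}},M)=M^{\langle a\rangle}\) is surjective. This amounts to: for every \(m\in M^{\langle a\rangle}\) there exist \(m_1,\dots,m_d\) with \(\sum_i (\partial a/\partial x_i)\,\mathrm{tr}\, m_i\)-type equation solvable; equivalently, every derivation problem on \(\langle a\rangle\) extends.

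\emph{Step 3: transfer surjectivity from \(G\).} The same map for \(G\) (from \cref{asphericity}, with \(A_p=\{a\}\)) is surjective for every finite \(\FFalgebra{G}\)-module, because \(\HH^2_{\mathrm{ct}}(G,M)=0\) by projectivity. A finite \(\widehat{\Gamma_k}\)-module \(M\) is a \(G\)-module via Step 1, and the two cokernel computations agree: \(M^{\overline{\langle \bar a\rangle}}\) is the same whether computed in \(G\) or \(\widehat{\Gamma_k}\) (the image of \(C\) is \(C/kC\), acting through the same quotient), and the traces correspond under functoriality of \(\mathrm{tr}\), as noted after \cref{lem-splitting-augmentation}. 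Hence \(\HH^2_{\mathrm{ct}}(\widehat{\Gamma_k},M)=\HH^2_{\mathrm{ct}}(G,M)=0\), giving \(\cd_p(\widehat{\Gamma_k})\le 1\).

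\emph{Main obstacle.} The delicate step is Step 2: producing the exact sequence for \(\widehat{\Gamma_k}\) for all relevant \(k\), not only large ones. For this one needs the cohomological goodness of \(\Gamma_k\) (via Lyndon's asphericity for one-relator groups with torsion together with \cite{Wi21}), and one must then verify that the trace normalisation matches the one coming from \(G\).
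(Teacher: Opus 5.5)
Your proposal is correct and follows the paper's proof essentially step for step. You realise \(\widehat{\Gamma_k}\) as a quotient of \(G\) (because \(k\) is a unit in \(\widehat{\bbz_P}\)), compare the resolution \cref{asphericity} for \(G\) with its analogue for \(\widehat{\Gamma_k}\) (which is exact by cohomological goodness of \(\Gamma_k\)), and note that after applying \(\Hom(-,M)\) both give the same map \(M^d\to M^{\overline{\langle a\rangle}}\), the traces matching by functoriality, so that \(\HH^2_{\mathrm{ct}}(\widehat{\Gamma_k},M)\simeq\HH^2_{\mathrm{ct}}(G,M)=0\).
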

\begin{proof} Observe that \(\widehat{\Gamma_k}\) is a quotient of \(G\). As in the proof of Proposition~\ref{profpresent}, we have a commutative diagram
\[\begin{tikzcd}
0 \arrow[r] & \FFalgebra{G/\overline{\langle a \rangle}} \arrow[r, "\tau_G"] \arrow[d, "\gamma"']                 & \FFalgebra{G}^d \arrow[r] \arrow[d]        & \FFalgebra{G} \arrow[r] \arrow[d]        & \bbf_p \arrow[r] \arrow[d] & 0 \\
0 \arrow[r] & \FFalgebra{\widehat{\Gamma_k}/\overline{\langle a \rangle}} \arrow[r, "\tau_{\widehat{\Gamma_k}}"'] & \FFalgebra{\widehat{\Gamma_k}}^d \arrow[r] & \FFalgebra{\widehat{\Gamma_k}} \arrow[r] & \bbf_p \arrow[r]           & 0
\end{tikzcd}\] of \(\FFalgebra{G}\)-modules with exact rows where \(\gamma\) is induced by the quotient map \(G \to \widehat{\Gamma_k}\). Moreover, the top (resp. bottom) row is a projective resolution of \(\bbf_p\) over \(\FFalgebra{G}\) (resp. \(\FFalgebra{\widehat{\Gamma_k}}\)) since \(|\overline{\langle a \rangle}|\) is coprime to \(p\).

We recall how the maps \(\tau_G\) and \(\tau_{\widehat{\Gamma_k}}\) are defined: if we write \[a-1 = \frac{\partial a}{\partial x_1}(x_1-1)+\cdots+\frac{\partial a}{\partial x_d}(x_d-d)\] in \(\bbz[F]\), then both are given by \[x \cdot \mathrm{tr}_{\overline{\langle a \rangle}} \left(\frac{\partial a}{\partial x_1}\,,\ldots\,, \frac{\partial a}{\partial x_d}\right)\] for \(x\) in \(\FFalgebra{G}\) or \(\FFalgebra{\widehat{\Gamma_k}}\) with the trace element \(\mathrm{tr}_{\overline{\langle a \rangle}}\) computed accordingly.

Finally, observe that for any finite \(p\)-primary \(\widehat{\Gamma_k}\)-module \(M\) we have isomorphisms \[M^{\overline{\langle a \rangle}} \simeq\Hom_{\widehat{\Gamma_k}}(\FFalgebra{\widehat{\Gamma_k}/\overline{\langle a \rangle}}, M) \simeq \Hom_G(\FFalgebra{\widehat{\Gamma_k}/\overline{\langle a \rangle}},M) \overset{\Hom(\gamma)}{\simeq} \Hom_G(\FFalgebra{G/\overline{\langle a \rangle}},M)\,,\]
\[M^d \simeq \Hom_{\widehat{\Gamma_k}}(\FFalgebra{\widehat{\Gamma_k}}^d,M) \simeq \Hom_G(\FFalgebra{\widehat{\Gamma_k}}^d,M) \simeq \Hom_G(\FFalgebra{G}^d,M)\,.\] Therefore, applying \(\Hom_{\FFalgebra{G}}(-,M)\) to the diagram above we get the commutative diagram \[\begin{tikzcd}
M^d \arrow[r, "\Hom(\tau_G)"]                                         & M^{\overline{\langle a \rangle}} \arrow[r]                      & 0 \\
M^d \arrow[r, "\Hom(\tau_{\widehat{\Gamma_k}})"'] \arrow[u, "\simeq"] & M^{\overline{\langle a \rangle}} \arrow[r] \arrow[u, "\simeq"'] & 0
\end{tikzcd}\]
This gives us the isomorphism \[\HH^2_{\mathrm{ct}}(\widehat{\Gamma_k},M) \simeq \mathrm{coker}(\Hom(\tau_{\widehat{\Gamma_k}})) \simeq \mathrm{coker}(\Hom(\tau_G)) \simeq \HH^2_{\mathrm{ct}}(G,M)\] for all finite \(p\)-primary \(\widehat{\Gamma_k}\)-modules \(M\). Since \(G\) is projective, \(\mathrm{cd}_p(G) \leq 1\) and thus \(\HH^2_{\mathrm{ct}}(G,M) = 0\), showing that \(\mathrm{cd}_p(\widehat{\Gamma_k}) \leq 1\) as well.
\end{proof}
 
We finish this section by proving \cref{goodness}.

\begin{proof}[Proof of \cref{goodness}] We may assume that \(G\) is not virtually free. Moreover, upon passing to a subgroup of finite index, we may assume that \(G\) is torsion-free. Since \(G\) has cohomological dimension \(2\), it suffices to show that for every finite-index subgroup \(H \leq G\) and any prime \(p\) the canonical map \[\HH_{\mathrm{ct}}^2(\widehat{H}, \bbf_p) \to \HH^2(H,\bbf_p)\] is an isomorphism. Observe that for finitely generated \(H\) this map is always injective, and so we only have to check that the dimensions of both spaces coincide.

By replacing \(H\) with \(H*F_k\) for some finitely generated free group \(F_k\), we may assume that \(H = \pi(Y,\G)\) where \(Y\) is a finite graph with a single vertex \(u\), \(F = \G(u)\) is a finitely generated free group of rank \(d\) and each edge group \(\G(e)\) is cyclic.

From the Mayer-Vietoris sequence
\[0 \to \bbf_p^{|E(Y)|} \to \HH^1(H,\bbf_p) \to \bbf_p^d \to \bbf_p^{|E(Y)|} \to \HH^2(H,\bbf_p) \to 0\]
for the fundamental group of a graph of groups we conclude that \[\dim_{\bbf_p} \HH^2(H,\bbf_p) = \dim_{\bbf_p} \HH^1(H,\bbf_p) - d\,.\] The Mayer-Vietoris sequence for the profinite fundamental group of a graph of groups gives us
\begin{equation}\label{conth2}
    \begin{split}
        \dim_{\bbf_p} \HH^2_{\mathrm{ct}}(\widehat{H},\bbf_p) &= \dim_{\bbf_p} \HH^1_{\mathrm{ct}}(\widehat{H},\bbf_p) - |E(Y)| + \sum_{e \in E} \dim_{\bbf_p} \HH^1_{\mathrm{ct}}(\overline{\G(e)},\bbf_p)\\
        &\hphantom{aaaa} -\dim_{\bbf_p} \HH^1_{\mathrm{ct}}(\overline{\G(u)},\bbf_p) + \dim_{\bbf_p} \HH^2_{\mathrm{ct}}(\overline{\G(u)},\bbf_p)\,.
    \end{split}
\end{equation}

Let \(\Gamma\) be the graph associated with the standard presentation of \(\pi(Y,\G)\), and denote by \(\Gamma_p\) the union of the components \(C\) of the graph \(\Gamma\) for which \(p \in P(C)\). Put \(A_p = V(\Gamma_p)\) and \(E_p = E(\Gamma_p)\). By \cref{eulre0}, the Euler characteristic of such a component is \(0\) and so \(|A_p| = |E_p|\). Observe now that \(|E(Y)| = |E(\Gamma)|\) and so \[\sum_{e \in E(Y)} \dim_{\bbf_p} \HH^1_{\mathrm{ct}}(\overline{\G(e)},\bbf_p) - |E(Y)| = -|E_p|\,.\] From \cref{profpresent} we have \[\dim_{\bbf_p} \HH^1_{\mathrm{ct}}(\overline{\G(u)},\bbf_p) - \dim_{\bbf_p} \HH^2_{\mathrm{ct}}(\overline{\G(u)},\bbf_p) = d - |A_p|\,,\] and therefore from \cref{conth2} we conclude that 
\begin{align*}
    \dim_{\bbf_p} \HH^2_{\mathrm{ct}}(\widehat{H},\bbf_p) &= \dim_{\bbf_p} \HH^1_{\mathrm{ct}}(\widehat{H},\bbf_p) - d\\
    &= \dim_{\bbf_p} \HH^1(H,\bbf_p) - d\\
    &= \dim_{\bbf_p} \HH^2(H,\bbf_p)\,.
\end{align*}
This finishes the proof.
\end{proof}

\section{Relative splitting theorem for virtually free groups}\label{sect:relative}

Our goal in this section is to prove a relative version of the splitting theorem for finitely generated virtually free groups. The following theorem implies \cref{thm-relative-splitting-virtually-free-groups}.

\begin{teo}\label{relative splitting} Let \(G\) be a finitely generated virtually free group and let \(C_1\,,\ldots\,,C_n\) be subgroups of \(G\). Suppose that \(\widehat{G} \leq P = K\amalg L\), and that each \(C_i\) is conjugate in \(P\) to a subgroup of either \(K\) or \(L\). Then \(G\) splits as a free product \[G \simeq \left(\underset{i \in I}{\ast} G_i\right)  *F\,,\] where \(F\) is a free subgroup of \(G\), each \(G_i\) is contained in some \(P\)-conjugate of \(K\) or \(L\) and each \(C_j\) is contained in a \(G\)-conjugate of some \(G_i\).
\end{teo}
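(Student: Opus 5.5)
The plan is to construct a derivation from \(G\) into a module that is a direct union of free \(\Falgebra{G}\)-modules, apply Theorem~\ref{DD}, and then clean up the resulting \(G\)-tree into a free product decomposition.

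\emph{The derivation.} Fix a prime \(p\). By Lemma~\ref{elliptic}(i), the sequence \eqref{exactseq} for \(P = K \amalg L\) is
\[0 \to \FFalgebra{P} \to \FFalgebra{P/K} \oplus \FFalgebra{P/L} \to \bbf_p \to 0\,.\]
Put \(\omega = (K,-L)\), which lies in the middle term and has augmentation \(0\) there. Define
\[d\colon G \to \FFalgebra{P}\,, \qquad d(g) = (g-1)\,\omega\,.\]
This lands in the image of \(\FFalgebra{P}\), and it is a derivation. By Proposition~\ref{flatness}, \(\FFalgebra{P}\) is a direct union of free \(\Falgebra{G}\)-modules. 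Since \(G\) is finitely generated, \(d(G)\) lies in a finitely generated submodule, hence inside one of these free summands. Theorem~\ref{DD} then gives a \(G\)-tree \(X\) with finite edge stabilizers whose elliptic subgroups are exactly those \(H\) on which \(d\) is inner.

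\emph{Elliptic subgroups versus conjugates of \(K\) and \(L\).} First, if \(H \leq {^xK}\), I expect \(d|_H\) to be inner. The element \(x\omega\) differs from \(\omega\) by an element of \(\FFalgebra{P}\), and by Lemma~\ref{claim-conjugate-derivations} it suffices to treat \(x = 1\). For \(H \leq K\) one must check that \((h-1)(0,-L)\) is inner in \(\FFalgebra{P}\); this should follow because the \(K\)-component is fixed by \(H\). So each \(C_j\) and each subgroup of a conjugate of \(K\) or \(L\) is elliptic.

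Conversely, suppose \(d|_H = (h-1)a\) is inner. Then \(\omega - a\) is an \(H\)-fixed element of \(\FFalgebra{P/K} \oplus \FFalgebra{P/L}\). Comparing augmentations, one of its two components has nonzero augmentation. Lemma~\ref{fixedpoints} then gives some \(g\) with \(p^\infty \nmid [H : H \cap {^gK}]\), or the analogous statement for \(L\).

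\emph{The main obstacle.} The crucial and hardest step is upgrading this to \(H \leq {^gK}\) (or \({^gL}\)) for each infinite vertex stabilizer \(H\). Here \(\overline{H}\) is a finitely generated virtually free profinite group. I would try first to combine the statements for all primes \(p\), or to choose \(p\) not dividing the orders of the finite subgroups of \(G\) (finite subgroups are automatically elliptic). I would then use the profinite Kurosh theory for \(\overline{H} \cap {^gK}\): a subgroup of \(p\)-power-finite index that is a free factor of \(\overline{H}\) should be all of \(\overline{H}\), using Lemma~\ref{normalizers} and Lemma~\ref{elliptic}(ii). If this fails, a fallback is to refine \(X\) inductively: re-apply the construction to any vertex group not yet contained in a conjugate, using the accessibility of finitely generated virtually free groups to terminate.

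\emph{From the tree to a free product.} Once every vertex stabilizer lies in a \(P\)-conjugate of \(K\) or \(L\), pass to a finite incompressible quotient graph of groups with finite edge groups. Let \(G_v \leq {^xK}\) and \(G_w \leq {^yL}\) be vertex groups. An edge group contained in both lies in \({^xK} \cap {^yL}\), which is trivial. If \(G_v \leq {^xK}\) and \(G_w \leq {^yK}\) with \(xK \neq yK\), the edge group is again trivial by Lemma~\ref{normalizers}. The remaining edges, with nontrivial group, join vertices lying in the same conjugate \({^xK}\) or \({^xL}\). Collapse these edges; the resulting vertex groups still lie in that conjugate. All remaining edges then have trivial groups, so
\[G \simeq \left(\underset{i \in I}{\ast} G_i\right) * F\,,\]
where \(F\) is the fundamental group of the underlying graph. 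Each \(C_j\), being elliptic, lies in a \(G\)-conjugate of some \(G_i\).
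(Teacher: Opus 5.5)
There is a genuine gap at the very first step: your derivation is inner on all of $G$. By exactness of $0\to\FFalgebra{P}\to\FFalgebra{P/K}\oplus\FFalgebra{P/L}\to\bbf_p\to 0$, every element of the middle term with augmentation $0$ already lies in the image of $\FFalgebra{P}$. Concretely, $\omega=(K,-L)$ is $\pm$ the image of $1$. Hence $d(g)=(g-1)\omega$ is a principal derivation into $\FFalgebra{P}$, inner on all of $G$ with witness $\omega$ itself.

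Your converse step is the only place where ellipticity gets linked to conjugates of $K$ and $L$, and it becomes vacuous: for every $H$ you may take $a=\omega$, so $\omega-a=0$ and \emph{comparing augmentations} yields no nonzero fixed point. Note that $(g-1)\omega$ has augmentation $0$ for \emph{every} $\omega$. So augmentation zero was never needed to land in $\FFalgebra{P}$; what you need is the opposite.

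The paper's derivation corresponds, in your notation, to $\omega=(0,-L)$, which has augmentation $-1$. It is the projection of $g-1$ onto the summand $\widehat{^PI_{\FFalgebra{K}}}$ of $I_{\FFalgebra{P}}\simeq\widehat{^PI_{\FFalgebra{K}}}\oplus\widehat{^PI_{\FFalgebra{L}}}$ from Lemma~\ref{elliptic}(i). With this choice, $\omega-a$ has augmentation $-1$ for any $a\in\FFalgebra{P}$, so it is a nonzero $H$-fixed vector and your augmentation argument goes through. Subgroups of conjugates of $K$ and $L$ remain inner: $d$ vanishes on $L$, equals $(h-1)\cdot(\pm1)$ on $K$, and conjugates are handled by Lemma~\ref{claim-conjugate-derivations}.

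Three further steps need filling.

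\emph{Witnesses in $M$.} Theorem~\ref{DD} is applied to a free submodule $M\subseteq\FFalgebra{P}$. Innerness of $d|_{C_j}$ into $\FFalgebra{P}$ does not give innerness into $M$. You must first enlarge the finitely generated module so that it contains witnesses $a_j$ for the $C_j$, and only then invoke Proposition~\ref{flatness}, as the paper does.

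\emph{Your main obstacle.} It has a short resolution, the paper's Claim~\ref{p-part-infinite}, which needs neither several primes nor accessibility. Suppose $G_v$ is infinite and $G_v\not\leq{^gK}$. Take a torsion-free normal subgroup $H_v$ of finite index in $G_v$; by Lemma~\ref{normalizers}, $H_v\not\leq{^gK}$. Pick $x\in H_v\smallsetminus{^gK}$. Then $\overline{\langle x\rangle}\simeq\widehat{\bbz}$ meets ${^gK}$ trivially, by Lemma~\ref{elliptic}(ii) and torsion-freeness. So $[\overline{G_v}:\overline{G_v}\cap{^gK}]$ is either $1$ or has infinite $p$-part for every $p$. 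Lemma~\ref{fixedpoints} at a single prime then forces containment.

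\emph{Collapsing.} When you collapse edges with nontrivial stabilizers, you must check that any $g$ carrying one vertex $u_1$ of a collapsed component to another, e.g.\ a stable letter, lies in the common conjugate $U$. This follows from Lemma~\ref{normalizers}, since $\{1\}\neq G_{u_1}\leq U\cap{^{g^{-1}}U}$.
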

\begin{proof} If \(G\) is contained in a \(P\)-conjugate of \(K\) or \(L\), then there is nothing to prove, so we assume that this is not the case. Let \(p\) be a prime. By \cref{elliptic}(i), we have \[I_{\FFalgebra{P}} \simeq \widehat{^PI_{\FFalgebra{K}}} \oplus \widehat{^PI_{\FFalgebra{L}}}\,,\] where this isomorphism is induced by the natural inclusions
\[I_{\FFalgebra{K}}\,, I_{\FFalgebra{L}} \hookrightarrow I_{\FFalgebra{P}}\,.\]
In particular, we obtain a derivation \(d_L\colon P \to \FFalgebra{P}\) defined by \(g \mapsto g-1\) in \(I_{\FFalgebra{P}}\) and then taking the quotient modulo \(\widehat{^PI_{\FFalgebra{L}}}\).

Let \(U\) be a subgroup of \(G\) contained in a \(P\)-conjugate of \(K\) or \(L\). Then, by \cref{claim-conjugate-derivations}, the restriction of \(d_L\) to \(U\) is inner. In particular, this applies to  each  subgroup  \(U = C_i\).

\begin{claim} There is a finitely generated \(\Falgebra{G}\)-submodule \(N\) of \(\FFalgebra{P}\) containing \(d_L(G)\) and such that the induced derivation \(d_L\colon  G \to N\) is inner when restricted to the subgroups \(C_i\).
\end{claim}
\begin{proof}
Since \(G\) is finitely generated, say by \(x_1\,,\ldots\,,x_k\), then \(d_L(G)\) is contained in the \(G\)-submodule generated by \(d(x_1)\,,\ldots\,,d(x_k)\). Moreover, for each \(i\) there is also an element \(a_i\) in \(\FFalgebra{P}\) such that \[d_L(g) = (g-1)a_i\] for all \(g \in C_i\). Hence, we may take \(N\) to be the \(G\)-submodule generated by all the \(d(x_i)\), \(a_i\).
\end{proof}

By  \cref{flatness}, the \(\Falgebra{G}\)-module \(N\) is contained in a finitely generated free \(\Falgebra{G}\)-submodule \(M\) of \(\FFalgebra{P}\). Thus we view \(d_L\) as a derivation \(d\colon G \to M< \FFalgebra{P}\).

\begin{claim}\label{claim-d-not-inner} \(d\) is not inner.
\end{claim} 
\begin{proof} 
Assume that \(d\) is inner. Let \(a \in \FFalgebra{P}\) be such that \(d_L(g) = (g-1)a\) for all \(g \in G\). By the definition of \(d_L\), we have:
\[(g-1)a \equiv 0 \pmod{\widehat{^{P}I_{\FFalgebra{L}}}}\,,\]
\[(g-1)a \equiv g-1 \pmod{\widehat{^{P}I_{\FFalgebra{K}}}}\] for all \(g \in \widehat{G}\), meaning that the image of \(a\) is a \(\widehat{G}\)-fixed point in \(\FFalgebra{P/L}\) and the image of \(1-a\) is a \(\widehat{G}\)-fixed point in \(\FFalgebra{P/K}\). However, \(\FFalgebra{P/L}^{\widehat{G}}= \FFalgebra{P/K}^{\widehat{G}} = 0\),   by \cref{fixedpoints}. Hence, \(a \in \widehat{^{P}I_{\FFalgebra{L}}}\) and \(1-a \in \widehat{^{P}I_{\FFalgebra{K}}}\), from which we obtain \[1 = (1-a)+a \in \widehat{^{P}I_{\FFalgebra{K}}} \oplus \widehat{^{P}I_{\FFalgebra{L}}} = I_{\FFalgebra{P}}\,,\] a contradiction.
\end{proof}

Since \(d\) is a non-inner derivation to a projective \(\Falgebra{G}\)-module, by \cref{DD} there exists a \(G\)-tree \(X''\) with finite edge stabilizers such that a subgroup \(H\) stabilizes a vertex of \(X''\) if and only if the restriction of \(d\) to \(H\) is inner.  
Let \(v_1\,, \ldots\,, v_n\) be vertices of \(X''\) fixed by \(C_1\,, \ldots\,, C_n\) respectively. Since \(G\) is finitely generated, there exists a \(G\)-subtree \(X' \subseteq X''\) containing \(v_1\,, \ldots\,, v_n\) such that \(G / X'\) is finite. After collapsing all compressible edges, we may assume that \(X'\) is incompressible.  Observe that each \(C_i\) fixes at least one vertex in \(X'\).

\begin{claim}\label{p-part-infinite} For any \(g \in P\) and any vertex \(v \in X'\) with \(G_v\) infinite, the indexes \([\overline{G_v}\colon \overline{G_v}\cap {^gK}]\) and \([\overline{G_v}\colon \overline{G_v}\cap {^gL}]\) are either \(1\) or \(\prod_{p\text{ prime}} p^\infty\).
\end{claim}
\begin{proof}
It suffices to prove the claim for \(K\), as the same argument works for \(^gK\) or \(^gL\). If \(G_v \leq {K}\), then \(\overline {G_v} \leq {K}\) and hence the index is \(1\). Assume now that \(G_v\) is not contained in \(K\) and consider a finite index torsion-free normal subgroup \(H_v\) of \(G_v\). We cannot have \(H_v \leq {K}\) either, since in this case \(N_G(H_v)\leq {K}\) by \cref{normalizers} and this normalizer contains  \(G_v\). If \(x\) is any element of \(H_v \smallsetminus {K}\), then the infinite cyclic subgroup \(Z\) generated by \(x\) intersects \({K}\) trivially by \cref{normalizers} again.

Since \(G\) is cyclic subgroup separable, we have that \(\overline{Z} = \widehat{\bbz}\) is a subgroup of \(\overline {G_v}\), and moreover, by \cref{elliptic}, \(\overline{Z} \cap {K} = \{1\}\). This shows that the \(p\)-part of the index \([\overline {G_v}\colon \overline {G_v} \cap {K}]\) must be infinite for every prime \(p\).
\end{proof}

\begin{claim} For each \(G_v\) there exists \(g \in P\) such that \(G_v\) is contained in \(^gK\) or \(^g L\).
\end{claim}
\begin{proof} If \(G_v\) is finite, then by \cite[Cor. 7.1.3(a)]{ribesProfiniteGraphsGroups2017}, some conjugate of \(G_v\) in \(\widehat{G}\) lies in \(K\) or \(L\).  Therefore, we may assume that \(G_v\) is infinite. Since \(d\) is inner in \(G_v\), there exists \(a \in \FFalgebra{P}\) such that \(d(h) = (h-1)a\) for every \(h \in G_v\). As in the proof of \cref{claim-d-not-inner}, we have:
\[(h-1)a \equiv 0 \pmod{\widehat{^{P}I_{\FFalgebra{L}}}}\,,\]
\[(h-1)a \equiv h-1 \pmod{\widehat{^{P}I_{\FFalgebra{K}}}}\]
for all \(h \in G_v\), which means that the image of \(a\) is a \(G_v\)-fixed point in \(\FFalgebra{P/L}\) and that the image of \(1-a\) is an \(G_v\)-fixed point in \(\FFalgebra{P/K}\). If both were actually zero in those quotients, it would lead to the same contradiction as in \cref{claim-d-not-inner}. Hence, at least one of \(\FFalgebra{P/L}\) or \(\FFalgebra{P/K}\) must contain a non-zero \(G_v\)-fixed point. By \cref{fixedpoints}, there exists \(g \in P\) such that  either \([\overline{G_v}\colon \overline{G_v} \cap {^gL}|\) or \([\overline{G_v}\colon \overline{G_v}\cap {^gK}]\) is not divided by \(p^{\infty}\). Therefore, by \cref{p-part-infinite}, \(G_v\) is contained in \(^gL\) or \(^gK\).  
\end{proof}

Let \(\mathcal{S}\) be the set of subgroups of \(P\) that are conjugates of \(K\) or \(L\). By \cref{normalizers}, any two distinct subgroups in \(\mathcal{S}\) intersect trivially. Thus, for each vertex \(v\) of \(X'\), there exists a unique subgroup \(U \in \mathcal{S}\) such that \(G_v \leq U\). Moreover, if \(G_e\) is a non-trivial edge stabilizer, then the stabilizers of its adjacent vertices are contained in the same subgroup of \(\mathcal{S}\).

Let \(X\) be the tree obtained from \(X'\) by collapsing all edges with non-trivial stabilizers. Assume, for contradiction, that the result tree consists of a single vertex. Then the closed subgroup of \(\widehat{G}\) generated by \(\{G_v\colon v \in V(X')\}\) is normal and contained in a \(P\)-conjugate of \(K\) or \(L\). Hence, \(\widehat{G}\) itself is contained in the same conjugate of \(K\) or \(L\), which contradicts our assumption. Therefore, \(X\) is not a point.

Let \(v\) be a vertex of \(X\) and let \(U \in \mathcal{S}\) be such that \(G_v \leq U\). If \(g \in G\) stabilizes \(v\), then there are two vertices \(u_1\) and \(u_2 = gu_1\) of \(X'\) such that \(G_{u_1}\) and \(G_{u_2}\) are non-trivial and contained in \(U\). Therefore, \(G_{u_1} \leq {^{g^{-1}}U} \cap U \neq \{1\}\), and so \(g \in U\). Therefore, we have obtained an infinite tree on which \(G\) acts with trivial edge stabilizers, such that each \(C_i\) fixes at least one vertex, and any vertex stabilizer lies in some subgroup from \(\mathcal{S}\). This implies that \(G\) splits as the desired free product.
\end{proof}

\begin{proof}[Proof of \cref{thm-relative-splitting-virtually-free-groups}] Let \(C_1\,,\ldots\,, C_n\) be subgroups of the form \(G \cap {^gL}\) or \(G \cap {^hK}\) with \(g,h \in P\). Then, by \cref{relative splitting}, \(G\) splits as a free product \[G \simeq \left(\underset{i \in I}{\ast} G_i\right) * F\] where \(F\) is a free group, each \(G_i\) is contained in some \(P\)-conjugate of \(K\) or \(L\) and each \(C_j\) is contained in a \(G\)-conjugate of some \(G_i\). It is clear that if \(C_j \leq G_i\), then \(C_j = G_i\). Since the number of possible non-trivial \(G_i\) is bounded, by choosing a sufficiently large number of subgroups \(C_j\), we obtain that all \(G_i\) are of the form \(G \cap {^g L}\) or \(G \cap {^hK}\).
\end{proof}

\section{Detecting free products and free factors}\label{sect:main-thm}

Our first goal in this section is to prove the following theorem, which in turn implies \cref{thm-detects-free-products}.

\begin{teo}\label{main-thm} Let \(G=\pi(Y,\G) \) be the fundamental group of a finite graph of finitely generated virtually  free groups with virtually cyclic edge groups. Assume that \(G\) is LERF and that \(\widehat{G} = L\amalg K\) decomposes non-trivially as a free profinite product. Then, \(G\) decomposes as a free product 
\begin{equation}\label{decomp}
G= \left(\underset{i \in I}{\ast} G_i\right) * F, \end{equation}
where \(F\) is a free group and each \(G_i\) is contained in some \(\widehat{G}\)-conjugate of \(L\) or \(K\). Moreover,  if \(C_1\leq \G(v_1)\,,\ldots\,, C_n\leq \G(v_n)\) are finitely generated subgroups of vertex groups lying in some \(\widehat{G}\)-conjugate of \(L\) or \(K\), then we can also choose the \(G_i\) in \cref{decomp} such that each \(C_j\) lies in some \(G\)-conjugate of some \(G_i\).
\end{teo}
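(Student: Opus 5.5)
Since \(G\) is LERF with finitely generated vertex and edge groups, the graph of groups \((Y,\G)\) is efficient. Hence \(\widehat{G} = \hpi(Y,\overline{\G})\) with \(\overline{\G(x)} \simeq \widehat{\G(x)}\) for every vertex and edge \(x\), and \(\widehat{G}\) acts on the profinite Bass--Serre tree \(S\) containing the abstract tree \(T\) of \(G\).

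\textbf{Step 1: edges and Baumslag--Solitar subgroups.} Each infinite edge group \(\G(e)\) is virtually cyclic. Its closure \(\widehat{\G(e)}\) is therefore virtually procyclic and infinite. By \cref{elliptic}(ii), if \(\widehat{\G(e)}\) meets a conjugate of \(K\) or \(L\) in an infinite subgroup, it lies entirely in that conjugate. By \cref{BSsubgroups}, every Baumslag--Solitar subgroup is already inside a conjugate of \(K\) or \(L\). I would use these facts to show that the infinite edge groups can be handled by the vertex-level splittings.

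\textbf{Step 2: split each vertex group relative to its edges.} For each vertex \(v\), apply \cref{relative splitting} to \(\G(v)\), viewed through \(\widehat{\G(v)} \leq \widehat{G} = K \amalg L\). Take as the \(C_i\) the incident edge groups \(i_e(\G(e))\), \(t_e(\G(e))\), together with any prescribed \(C_j \leq \G(v_j)\). For this I must first check that each incident infinite edge group is \(\widehat{G}\)-conjugate into \(K\) or \(L\); this is the crucial point. The theorem then gives
\[\G(v) = \Bigl(\ast_i G_{v,i}\Bigr) * F_v,\]
with each \(G_{v,i}\) inside a \(\widehat{G}\)-conjugate of \(K\) or \(L\), and every edge group and every \(C_j\) conjugate into some \(G_{v,i}\). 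Finite edge groups cause no trouble, since finite subgroups are always conjugate into a factor by \cite[Cor. 7.1.3(a)]{ribesProfiniteGraphsGroups2017}.

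Now refine the graph of groups. Blow up each vertex \(v\) into the graph of groups of its free splitting, and reattach each edge to the factor containing a conjugate of its edge group. This gives a new decomposition of \(G\) whose vertex groups are the \(G_{v,i}\) (plus free parts), with virtually cyclic edge groups.

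\textbf{Step 3: collapse and conclude.} Collapse every edge with an infinite edge group. An infinite edge group lies in exactly one member of \(\mathcal{S}\), the set of conjugates of \(K\) and \(L\), and distinct members of \(\mathcal{S}\) intersect trivially by \cref{normalizers}. So, as at the end of the proof of \cref{relative splitting}, adjacent vertex groups joined by an infinite edge lie in the same member of \(\mathcal{S}\). The same intersection argument shows that the stabilizer of a collapsed vertex also lies in that member. What remains is a \(G\)-tree with finite edge stabilizers in which every vertex stabilizer lies in a conjugate of \(K\) or \(L\).

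Finite edge groups then give a decomposition of the required form, once one notes that every finite subgroup lies in a conjugate of a vertex group and can be absorbed. To make the edge groups actually trivial I would pass to a torsion-free subgroup of finite index (\cref{lem-residually-finite}) only to check; more directly, I would reuse the derivation argument of \cref{relative splitting} on \(G\) with target the vertex stabilizers. Non-triviality follows because \(\widehat{G}\) itself is not in a conjugate of \(K\) or \(L\).

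\textbf{Main obstacle.} The hardest step is the claim in Step 2 that each infinite edge group is \(\widehat{G}\)-conjugate into \(K\) or \(L\). My first attempt would be to show that a virtually procyclic subgroup of a free profinite product, if not conjugate into a factor, acts freely on the profinite tree of \(K \amalg L\). Such a subgroup is then projective, and its \(\widehat{\bbz}\)-part would be forced to act hyperbolically. I would then exploit that \(\widehat{\G(e)}\) is also a subgroup of the vertex group \(\widehat{\G(v)}\) and of \(\widehat{\G(w)}\), and use \cref{p-part-infinite}-style index computations together with \cref{fixedpoints}. If this fails, the fallback is to run the derivation \(d_L\) on all of \(G\), using \cref{goodness} and the exact sequence \eqref{exactseq}, to produce the tree directly.
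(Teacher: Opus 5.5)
\textbf{There is a genuine gap.} The step you flag as crucial is that every infinite edge group \(\G(e)\) is \(\widehat{G}\)-conjugate into \(K\) or \(L\). This is false in general, so Step~2 cannot be carried out as stated.

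Take \(G=\langle a,b\rangle *_{ab=x}\langle x,y\rangle\), which is free on \(a,b,y\), and split \(\widehat{G}=L\amalg K\) with \(L=\overline{\langle a\rangle}\) and \(K=\overline{\langle b,y\rangle}\). The retraction \(\widehat{G}\to L\) killing \(K\) sends \(ab\) to \(a\neq 1\). The retraction \(\widehat{G}\to K\) killing \(L\) sends \(ab\) to \(b\neq 1\). Hence the infinite edge group \(\langle ab\rangle\) lies in no conjugate of \(K\) or \(L\), and you cannot feed it into \cref{relative splitting} for the vertex group \(\langle a,b\rangle\).

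By \cref{elliptic}(ii), the only alternative to lying in such a conjugate is that \(\G(e)\) meets every conjugate of \(K\) and \(L\) in a finite subgroup. Since this genuinely occurs, your first attempt (projectivity, hyperbolic action, index computations via \cref{fixedpoints}) cannot yield a contradiction.

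Your fallback does not help either. Running \(d_L\) on all of \(G\) requires \(d_L(G)\) to land in a projective \(\Falgebra{G}\)-module before \cref{DD} applies. That is exactly \cref{flatness}, which is specific to virtually free groups. For a one-ended \(G\) in \(\mathcal{C}\), the whole content of the theorem is that \(\widehat{G}\) cannot split, and that needs an input absent from your plan.

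\textbf{The missing idea.} Instead of putting every edge into a factor, sort edges into two kinds.
\begin{enumerate}
\item Call \(e\) \emph{parabolic} if \(\G(e)\) is infinite and lies in a \(\widehat{G}\)-conjugate of \(K\) or \(L\).
\item Delete the parabolic edges and collapse each component \(X\) of what remains. This also absorbs all finite edge groups.
\item Prove that each \(\pi(X,\G)\) is still virtually free (\cref{claim-after-collapsing}). This is where the real work is.
\end{enumerate}

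The proof of the third step runs as follows.
\begin{enumerate}
\item By LERF, \(\overline{\pi(X,\G)}\simeq\hpi(X,\widehat{\G})\).
\item Take \(N\) torsion-free of finite index (\cref{lem-residually-finite}). Then \(\overline{N}\simeq\widehat{N}\) is torsion-free by \cref{goodness}.
\item Set \(U=\overline{N}\cap\overline{\pi(X,\G)}\). By \cref{elliptic}(ii) and torsion-freeness, the intersections \(U\cap{^gK}\) and \(U\cap{^gL}\) meet the non-parabolic edge groups of \(X\) trivially. So they are projective by \cref{projectiveintersection}.
\item A second application of \cref{projectiveintersection}, to \(K\amalg L\), shows \(U\) is projective.
\item By \cref{BSsubgroups}, \(\pi(X,\G)\) contains no Baumslag--Solitar subgroup, since one would make an edge of \(X\) parabolic.
\item So \cref{wilton} (Wilton's virtual retraction onto surface groups, together with Bestvina--Feighn) forces \(\pi(X,\G)\) to be virtually free.
\end{enumerate}

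Only after this reduction are all edges parabolic. From that point your Steps~2 and~3 are essentially the paper's argument: relatively split each vertex group with the edge groups and the \(C_j\) as prescribed subgroups, blow up the vertices, and collapse along non-trivial edge groups, using that distinct conjugates of \(K\) and \(L\) intersect trivially.
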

\begin{proof}  Recall that by \cref{lem-residually-finite}, \(G\) contains a finite index subgroup \(N\) that is the fundamental group of a finite graph of finitely generated free groups with cyclic edge groups. By \cref{goodness}, \(N\) is cohomologically good. In particular, the profinite completion of \(N\) is torsion-free, and so the closure \(\overline{N} \simeq \widehat{N}\) of \(N\) in \(\widehat{G}\) is torsion-free.

We call an edge \(e\) of \(Y\) \emph{parabolic} with respect to the splitting \(L \amalg K\) if \(\G(e)\) is infinite and is a subgroup of some \(\widehat{G}\)-conjugate of \(L\) or \(K\). By \cref{elliptic}, if \(\G(e)\) has infinite interserction with a conjugate of \(K\) or \(L\), then \(e\) is parabolic. 

Let \(\Delta\) be the subgraph of \(Y\) obtained by removing all the parabolic edges. Let \((\widetilde{Y}, \widetilde{\G})\) be the quotient graph of groups obtained by collapsing each connected component \(X\) of \(\Delta\) to a point \(x\) and setting \(\widetilde{\G}(x) = \pi(X,\G)\).

\begin{claim}\label{claim-after-collapsing} The vertex groups \(\widetilde{\G}(x)\) are virtually free.
\end{claim}
\begin{proof}
Since \(G\) is LERF, we have \[\widehat{\pi(X, \widehat{\G})} \simeq \widehat{\pi(X, \mathcal{G})} \simeq \overline{\pi(X, \mathcal{G})} \subseteq \widehat{G}\,.\] Let \(U = \overline{N} \cap \overline{\pi(X, \mathcal{G})}\) and for each \(g \in G\) let \(U_{g,L} = U \cap {^g L}\) and \(U_{g,K} = U \cap {^gK}\). We regard the \(U_{g,L}\) and \(U_{g,K}\) as subgroups of \(\pi(X,\widehat{\G})\). They have trivial intersection with the edge subgroups of \((X,\widehat{\G})\), and thus by \cref{projectiveintersection} each one of the \(U_{g,L}\) and \(U_{g,K}\) are projective. Applying \cref{projectiveintersection} again, we deduce that \(U\) is projective. By \cref{BSsubgroups}, \(\pi(X,\G)\) contains no Baumslag--Solitar subgroups. Therefore, by \cref{wilton}, \(\pi(X,\G)\) is virtually free.
\end{proof}

From now on, we assume that all edge subgroups \(\G(e)\), \(e \in E(Y)\), are infinite and contained in some \(\widehat{G}\)-conjugate of \(K\) or \(L\). Applying \cref{thm-relative-splitting-virtually-free-groups}, we can decompose each vertex group \(\G(v)\) for \(v \in V(Y)\) as \[\G(v) = \left(\underset{i \in I_V}{\ast} H_{i,v}\right) * F_v\,,\] where \(F_v\) is a free subgroup of \(\G(v)\) and each \(H_{i,v}\) is of the form \(\G(v) \cap {^gL}\) or \(\G(v) \cap {^gL}\) for some \(g \in \widehat{G}\). In particular, if \(C_j \leq \G(v)\), then it is \(\G(v)\)-conjugate to a subgroup of some \(H_{i,v}\). Also, if \(\iota(e) = v\), then \(\G(e)\) is \(\G(v)\)-conjugate to a subgroup of some \(H_{i,v}\), and the same holds for \({^{q_e}\G(e)}\) if \(t(e) = v\). By adjusting the \(q_e\) if necessary, we can assume that \(\G(e)\) (respectively, \({^{q_e}\G(e)}\)) belongs to some \(H_{i,v}\) if \(v = \iota(e)\) (respectively, \(v = t(e)\)).

This allows us to construct another graph of groups \((Z,\mathcal{H})\) by replacing each vertex \(v \in V(Y)\) with a line having \(|I_v|+1\) vertices, whose vertex groups are the \(H_{i,v}\) and \(F_v\), with all edge groups in this line being trivial. For each \(e \in E(Y) \subset E(Z)\), we set \(\mathcal{H}(e) = \G(e)\), and these are all non-trivial.

Let \(\Sigma\) be the subgraph of \(Z\) obtained by removing all the edges in \(E(Y)\), and let \((\widetilde{Z}, \widetilde{\mathcal{H}})\) be the quotient graph of groups obtained by collapsing each connected component \(X\) of \(\Sigma\) to a point \(z\) and setting \(\widetilde{\mathcal{H}}(z) = \pi(X,\G)\). Then all edge groups of \(\widetilde{Z}\) are trivial, and observe that all \(C_i\) are conjugate to some vertex group of \(\widetilde{Z}\).

\begin{claim} For each vertex \(v\) in \(\widetilde{Z}\), there exists \(g \in \widehat{G}\) such that \(\widetilde{\mathcal{H}}(v)\) is contained in \(^gK\) or \(^gL\).
\end{claim}
\begin{proof}
Let \(\mathcal{S}\) be the set of subgroups of \(\widehat{G}\) that are conjugates of \(K\) or \(L\). Recall that any two distinct subgroups in \(\mathcal{S}\) intersect trivially by \cref{normalizers}, and that the normalizer in \(\widehat{G}\) of any subgroup of \(\mathcal{S}\) coincides with the subgroup itself. We have \(\widetilde{\mathcal{H}}(v) \simeq \pi(X,\G)\). Each \(\mathcal{H}(u)\), \(u \in V(X)\), lies in a unique subgroup \(U_u \in \mathcal{S}\), and since all edge groups \(\mathcal{H}(e)\), \(e \in E(X)\), are non-trivial, we have \(U_u = U\) for all \(u \in V(X)\). Thus, for any edge \(e \in E(X)\), \(\mathcal{H}(e) \subseteq U \cap {^{q_e}U}\), and so \(q_e \in U\). Therefore, \(\widetilde{\mathcal{H}}(v) \leq U\).
\end{proof}

Since all edge groups of \((\widetilde{Z}, \widetilde{\mathcal{H}})\) are trivial, the theorem follows.
\end{proof}

Now we prove both \cref{cor-relative-splitting} and \cref{oneendedH}.

\begin{proof}[Proof of \cref{cor-relative-splitting}] Observe that  \(G\) is LERF by \cite[Thm. 1]{appel_subgroup_1984}. By \cref{claim-after-collapsing}, if \(c_1\) is not contained in any \(\widehat{G}\)-conjugate of \(L\) or \(K\), then \(G\) is free. In this case, by \cite[Prop. II.5.13]{LSbook}, without loss of generality we can assume that \(\langle c_1\rangle\) is a free factor of \(F_1\), so the corollary follows. If \(G\) is not free, then again without loss of generality we can assume that \(c_1 \in L\). By \cref{main-thm}, we have a non-trivial splitting \[G = H * K\] where \(c_1 \in H \leq L\).  If both \(F_1\) and \(F_2\) were contained in \(H\), then we would have \(K = \{1\}\), a contradiction. Therefore, we may assume that \(F_1 \not\leq H\), and the Kurosh subgroup theorem gives us a non-trivial splitting \(F_1 = H_1 * K_1\) where \(c_1 \in H_1\).
\end{proof}

\begin{proof}[Proof of \cref{oneendedH}] Assume in \cref{main-thm} that \(L = \overline{H}\). If \(H\) is one-ended, then \(H\) belongs to some \({^g G_i}\). This subgroup \({^g G_i}\) must lie in \(\overline{H} \), and since \(G\) is LERF, we conclude that \({^g G_i} = H\). If \(H\) is contained in a conjugate of a vertex subgroup, we apply \cref{main-thm} with \(C_1 = H\). Then again \(H\) belongs to some \({^g G_i}\), and hence \({^g G_i} = H\).
\end{proof}
 
\section{Profinitely rigid words}\label{sect:rigid-words}

If \(F\) is a free group of rank \(d\) and \(w \in F\) is a word, then for any finite group \(G\) one has a word map \[w\colon \underbrace{G \times \cdots \times G}_{d} \to G\] given by substitution. The pushfoward of the uniform measure on \(G^d\) to \(G\) defines the \(w\)-measure on \(G\). Two words \(w_1\) and \(w_2\) in \(F\) are profinitely equivalent if the \(w_1\)- and \(w_2\)-measures coincide in every finite group \(G\). Observe that this is automatically the case if \(w_1\) and \(w_2\) lie in the same \(\Aut(F)\)-orbit. The following result gives an alternative description of profinitely equivalent words, and also justifies the definition of profinite rigidity of a word:

\begin{pro}[{\cite[Thm. 2.2]{hanany_orbits_2020}}] Let \(w_1\) and \(w_2\) be two elements of \(F\). The following are equivalent:
\begin{enumerate}
    \item The \(w_1\)-measure and the \(w_2\)-measure coincide in every finite group.
    \item There is an automorphism \(\varphi\in \Aut(\widehat{F})\) such that \(\varphi(w_1) = w_2\).
\end{enumerate}
\end{pro}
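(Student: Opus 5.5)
The direction (2)\(\Rightarrow\)(1) is a direct computation, so the plan is to do it first and then spend the real effort on (1)\(\Rightarrow\)(2).

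\textbf{(2)\(\Rightarrow\)(1).} Suppose \(\varphi\in\Aut(\widehat{F})\) satisfies \(\varphi(w_1)=w_2\). For a finite group \(G\), continuous homomorphisms \(\widehat{F}\to G\) are the same as homomorphisms \(F\to G\), and these correspond to points of \(G^d\) via the images of a basis. Under this identification, the \(w\)-measure of a set \(S\subseteq G\) equals \(|\{f\in\Hom(\widehat{F},G)\colon f(w)\in S\}|/|G|^d\). Precomposition with \(\varphi\) is a bijection of \(\Hom(\widehat{F},G)\) carrying the condition \(f(w_2)\in S\) to \(f\circ\varphi(w_1)\in S\). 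Hence the two measures agree.

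\textbf{(1)\(\Rightarrow\)(2).} Set \(N_G(w,g)=|\{f\colon F\to G\colon f(w)=g\}|\). Equality of measures says \(N_G(w_1,g)=N_G(w_2,g)\) for all finite \(G\) and all \(g\in G\). The goal is to upgrade these counts to a count of surjections with prescribed image of \(w\).

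First I would refine the counts to epimorphisms. Let \(E_G(w,g)\) be the number of epimorphisms \(F\to G\) sending \(w\) to \(g\). Then
\[N_G(w,g)=\sum_{H\leq G,\ g\in H}E_H(w,g).\]
By induction on \(|G|\), this relation shows \(E_G(w_1,g)=E_G(w_2,g)\) for all \(G\) and \(g\).

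Next I would consider pairs \((G,g)\), with morphisms the homomorphisms preserving the marked element, and count the open normal subgroups \(U\) with \((\widehat{F}/U,\ w_iU)\cong(G,g)\). This number is \(E_G(w_i,g)/|\Aut(G,g)|\), so it is the same for \(i=1,2\). Consequently, for each \(n\) the finite sets
\[\Phi_n^i=\{U\unlhd_o\widehat{F}\colon [\widehat{F}:U]\leq n\}\]
admit bijections \(\Phi^1_n\to\Phi^2_n\) together with isomorphisms \(\widehat{F}/U\to\widehat{F}/U'\) sending \(w_1U\) to \(w_2U'\).

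The key step is to pass to the limit. I would replace \(\Phi_n\) by the characteristic-type subgroup \(M_n=\bigcap_{U\in\Phi_n}U\); this subgroup does not depend on \(i\), since \(\Phi^1_n=\Phi^2_n\) as sets of subgroups. The set \(X_n\) of epimorphisms \(\widehat{F}\to\widehat{F}/M_n\) sending \(w_1\) onto the class of \(w_2\) modulo \(M_n\) is finite. By the counting above (taking \(G=\widehat{F}/M_n\) and \(g=w_2M_n\), and using \(E_G(w_1,g)=E_G(w_2,g)\geq1\)), \(X_n\) is nonempty. The sets \(X_n\) form an inverse system under reduction, so their inverse limit is nonempty.

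An element of the inverse limit gives a continuous surjective endomorphism \(\psi\) of \(\widehat{F}\) with \(\psi(w_1)=w_2\). Since \(\widehat{F}\) is finitely generated, it is Hopfian as a profinite group, so \(\psi\) is an automorphism. This completes (1)\(\Rightarrow\)(2).

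The main obstacle is making the counting-to-existence step airtight: one needs \(X_n\neq\varnothing\) for the specific target \(\widehat{F}/M_n\). This follows from \(E_G(w_1,g)=E_G(w_2,g)\) applied to \(G=\widehat{F}/M_n\) and \(g=w_2M_n\), because the quotient map itself witnesses that \(E_G(w_2,g)\geq1\).
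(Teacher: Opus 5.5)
Your proof is correct. The paper gives no argument of its own here; it simply quotes \cite[Thm. 2.2]{hanany_orbits_2020}. Your route is the standard proof of that cited result: M\"obius inversion over subgroups to get \(E_G(w_1,g)=E_G(w_2,g)\), then nonemptiness of the finite sets \(X_n\) over the characteristic quotients \(\widehat{F}/M_n\), an inverse-limit argument, and Hopficity of finitely generated profinite groups.

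The paragraph counting open normal subgroups by pointed isomorphism type, together with the bijections \(\Phi^1_n\to\Phi^2_n\), is never used and can be dropped. The limit step only needs \(E_G(w_1,g)=E_G(w_2,g)\geq 1\) for \(G=\widehat{F}/M_n\) and \(g=w_2M_n\).
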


We recall that \(w \in F\) is profinitely rigid if any \(u \in F\) in the same \(\Aut(\widehat{F})\)-orbit of \(w\) also lies in the same \(\Aut(F)\)-orbit of \(w\). It is universally profinitely rigid if it is profinitely rigid in \(F*H\) for any finitely generated free group \(H\).

Following \cite[Sec. 3]{Miasnikov2007}, we say that an element \(w\) of a free group \(F\) is \emph{algebraic} if there is no proper free factor of \(F\) that contains \(w\). Given an element \(w \in F\), there exists a unique free factor \(U\) of \(F\) that contains \(w\) such that \(w\) is algebraic in \(U\) (\cite[Sec. 3.4]{Miasnikov2007}). As in \cite{Miasnikov2007}, we call \(U\) the \emph{algebraic closure} of \(w\) in \(F\). The following proposition shows that the algebraic closure of a word in a free group is a profinite invariant.

\begin{pro}\label{algebraic} Let \(F\) be a finitely generated free group and let \(w \in F\).  Let \(L\) be a free profinite factor of \(\widehat{F}\) that contains \(w\). Then the algebraic closure of \(w\) in \(F\) is contained in \(L\). In particular, if \(U\) is the algebraic closure of \(w\), then \(\overline{U}\) is the smallest profinite free factor of \(\widehat{F}\) that contains \(w\).
\end{pro}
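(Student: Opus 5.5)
The plan is to apply \cref{thm-relative-splitting-virtually-free-groups} (in the sharper form of \cref{relative splitting}) to the free group \(F\) itself, viewed inside \(P = \widehat{F} = L \amalg K\), where \(K\) is a complement of \(L\). Set \(C_1 = \langle w \rangle\). Since \(w \in L\), the hypothesis of \cref{relative splitting} holds, so \(F\) splits as \(F = (\ast_{i} G_i) * F'\). Here each \(G_i\) lies in some \(\widehat{F}\)-conjugate of \(K\) or \(L\), and \(w\) lies in an \(F\)-conjugate of some \(G_{i_0}\), say \(w \in {^{f}G_{i_0}}\) with \(f \in F\). Then \({^fG_{i_0}}\) is a free factor of \(F\) containing \(w\), and it lies in a conjugate \({^gS}\) with \(S \in \{K, L\}\) and \(g \in \widehat{F}\).

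The next step is to identify \({^gS}\) with \(L\). The element \(w\) is non-trivial, which we may assume since otherwise the algebraic closure is trivial and there is nothing to prove. So \(w \in L \cap {^gS}\) is a non-trivial element. By \cref{normalizers}, distinct conjugates of \(K\) and \(L\) intersect trivially; moreover a conjugate of \(K\) cannot meet \(L\) non-trivially, since \(L \cap {^gK}\) is trivial in a free product. Hence \({^gS} = L\), and \({^fG_{i_0}} \le L\).

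Now let \(U\) be the algebraic closure of \(w\). I would use the standard fact from \cite{Miasnikov2007} that \(U\) is contained in every free factor of \(F\) containing \(w\): the intersection of two free factors is a free factor (by Kurosh), and uniqueness and minimality follow. Applying this to \({^fG_{i_0}}\) gives \(U \le {^fG_{i_0}} \le L\), which is the first claim.

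For the ``in particular'', note that \(\overline{U}\) is a profinite free factor of \(\widehat{F}\): if \(F = U * V\), then \(\widehat{F} = \overline{U} \amalg \overline{V}\). Given any profinite free factor \(L\) containing \(w\), the first part gives \(U \le L\), and since \(L\) is closed, \(\overline{U} \le L\). The main obstacle I expect is the conjugacy bookkeeping, namely ensuring that the factor containing \(w\) sits in \(L\) itself and not merely in some conjugate. This is exactly what the trivial-intersection argument of \cref{normalizers} handles, and it requires \(w \ne 1\).
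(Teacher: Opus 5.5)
Your proof is correct and follows the paper's argument. You apply \cref{relative splitting} with \(C_1=\langle w\rangle\), use the trivial intersection of distinct conjugates of \(K\) and \(L\) to place the factor containing \(w\) inside \(L\) itself, and then conclude by minimality of the algebraic closure; you are slightly more explicit than the paper, which leaves both the \(F\)-conjugation bookkeeping and the fact that the algebraic closure lies in every free factor containing \(w\) implicit.
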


\begin{proof}
Write \(\widehat{F} = L \amalg K\). Then, by \cref{relative splitting}, \(F\) splits as a free product \(F = G_1 * G_2\) such that \(w \in G_1\) and \(G_1\) is contained in some \(\widehat{F}\)-conjugate of \(K\) or \(L\). Since \(w \in L\), the only possibility is \(G_1 \leq L\). Hence, the algebraic closure of \(w\) in \(F\) is contained in \(L\).
\end{proof}

We are now ready to prove \cref{theorem-profinitely-rigid-words}.

\begin{proof}[Proof of \cref{theorem-profinitely-rigid-words}] It suffices to prove the theorem for \(w\) algebraic in \(F\). Assume that \(w\) is profinitely rigid in \(F\), and let \(u \in F*H\) be such  that there exists \(\varphi \in \Aut(\widehat{F*H})\) with \(\varphi(w)= u\). We want to show that \(u\) and \(w\) are in the same \(\Aut(F*H)\)-orbit. 

Let \(U\) be the algebraic closure of \(u\) in \(F*H\). Then \(F*H = U* H'\), and by \cref{algebraic} we have \(\varphi(\overline{F}) = \overline{U}\), where \(\overline{F}\) and \(\overline{U}\) denote the closure of \(F\) and \(U\) respectively in \(\widehat{F*H}\). Hence, \(\operatorname{rk}(F) = \operatorname{rk}(U)\), and so \(\operatorname{rk}(H) = \operatorname{rk}(H')\). Therefore, there exists an automorphism of \(F *H\) sending \(U\) to \(F\). Hence, we may assume that \(U = F\). Since \(w\) is profinitely rigid in \(F\) and, as already noted, \(\varphi(\overline{F}) = \overline{F}\), it follows that \(u\) and \(w\) are in the same \(\Aut(F)\)-orbit.

Conversely, assume that \(w\) is profinitely rigid in \(F *H\), and let \(u \in F\) be such that there exists \(\varphi \in \Aut(\widehat{F})\) with \(\phi(w) = u\). Any automorphism of \(\widehat{F}\) extends to an automorphism of \(\widehat{F*H}\), and thus we can find \(\psi \in \Aut(F *H)\) such that \(\psi(w) = u\). Let \(U\) be the algebraic closure of \(u\) in \(F *H\). Then, by \cref{algebraic}, \(\psi(F) = U \leq F\). Since \(\psi\) is an automorphism, we have \(\psi(F) = F\), showing that \(w\) is profinitely rigid in \(F\) as desired.
\end{proof}

\section{Further directions}\label{sect:limit}

Past the fundamental groups of graphs of virtually free groups with virtually cyclic edge groups, it is natural to try to extend \cref{thm-detects-free-products} to the class \(\mathcal{L}\) of limit groups, described as follows.

Let \(G'\) be a group and let \(A \subseteq G'\) be a finite malnormal family. Let \(Y\) be a graph with vertex set \(V(Y) = \{u\} \cup A\) and edge set \(E(Y) = \{e_a\colon a \in A\}\) where each edge \(e_a\) satisfies \(\iota(e_a) = u\) and \(t(e_a) = a\). Define the graph of groups \(\G\) on \(Y\) by means of \(\G(u) = G'\), \(\G(a) = \langle a \rangle \times \bbz^{n_a}\) for some \(n_a > 0\) and \(\G(e_a) = \langle a \rangle\). The homomorphisms from the edge groups of \((Y,\G)\) into the vertex groups are taken to be natural inclusions. The group \(\pi(Y,\G)\) is said to be obtained from \(G'\) by an extension of centralizers.

Let \(\mathcal{D}_0\) consists of the class of free groups, and define \(\mathcal{D}_{i+1}\) to be the class of groups that can be obtained from groups in \(\mathcal{D}_i\) by an extension of centralizers. We denote by \(\mathcal{L}_i\) the class of finitely generated subgroups of groups in \(\mathcal{D}_i\). Then, a \emph{limit group} is any group in the class \(\mathcal{L} = \bigcup_{i \geq 0} \mathcal{L}_i\).

\begin{Conj} The profinite completion of a one-ended limit group does not split as a free profinite product.\end{Conj}

Our methods allows one to obtain particular cases of this conjecture. By a result of Wilton, the limit groups are LERF (\cite{Wi08}). The same argument as in our proof of \cref{main-thm} provides:

\begin{teo}\label{limit1}
    Profinite completions detect free products in \( \mathcal{L}_1 \) .
\end{teo}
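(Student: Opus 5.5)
We will mimic the proof of \cref{main-thm}. Let \(G\in\mathcal{L}_1\) be finitely generated, so \(G\) is a finitely generated subgroup of a group \(D=\pi(Y,\G)\) obtained from a free group \(G'\) by an extension of centralizers. Restricting the action of \(D\) on its Bass--Serre tree to \(G\) and taking a minimal \(G\)-subtree gives \(G\) as the fundamental group of a finite graph of groups. Its edge groups are cyclic, being subgroups of the \(\langle a\rangle\). Its vertex groups are either finitely generated subgroups of conjugates of \(G'\), hence free, or finitely generated subgroups of \(\langle a\rangle\times\bbz^{n_a}\), hence free abelian. The first step is to check finite generation of the vertex groups. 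For this we use that \(G\) is finitely generated with cyclic edge groups, arguing as in the subsection on the class \(\mathcal{C}\) (acylindricity, or the \(\ell^2\)-Betti number argument). By Wilton's theorem \(G\) is LERF, so this graph of groups is efficient and \(\widehat{G}=\hpi(Y,\overline{\G})\) with \(\overline{\G(x)}\simeq\widehat{\G(x)}\). Also \(G\) is torsion-free and cohomologically good (limit groups are good), so \(\widehat G\) is torsion-free.

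Assume \(\widehat G=L\amalg K\) non-trivially. As before, call an edge \emph{parabolic} if its (infinite cyclic) edge group lies in a \(\widehat G\)-conjugate of \(L\) or \(K\). By \cref{elliptic}(ii), any edge group meeting such a conjugate non-trivially is parabolic. Every free abelian vertex group \(\bbz^m\) with \(m\ge2\) has closure \(\widehat{\bbz}^m\), which contains a non-trivial central procyclic subgroup. By the normalizer argument (\cite[Prop. 4.2.9]{ribesProfiniteGraphsGroups2017}), it is therefore contained in a conjugate of \(K\) or \(L\). Hence all edges adjacent to abelian vertices of rank at least \(2\) are parabolic. This plays the role of \cref{BSsubgroups}.

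We collapse the non-parabolic edges as in \cref{claim-after-collapsing}. Let \(X\) be a component of the non-parabolic subgraph. It contains no abelian vertex of rank at least \(2\) except possibly isolated ones, which are already elliptic and can be treated separately. So \(\pi(X,\G)\) is a graph of free groups with cyclic edge groups, still LERF. Running \cref{projectiveintersection} twice, exactly as in that claim, shows that \(\overline{\pi(X,\G)}\) has \(\cd_p\le1\). Moreover \(\pi(X,\G)\), being a subgroup of a limit group with LERF, contains no non-abelian Baumslag--Solitar subgroups, and any \(\bbz^2\) inside it would again be forced into a conjugate of a factor, contradicting non-parabolicity of its edges. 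So \cref{wilton} applies and \(\pi(X,\G)\) is free.

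After collapsing, all edges are parabolic and the vertex groups are either free or abelian. The abelian ones lie entirely in a conjugate of \(K\) or \(L\). For the free ones we apply \cref{relative splitting} with the \(C_i\) being the incident edge groups, which are elliptic by parabolicity. This splits each vertex group into pieces lying in conjugates of \(K\) or \(L\), plus a free part, with the incident edge groups contained in those pieces. We then build the graph \((Z,\mathcal H)\) and collapse the non-trivial edges as in the end of the proof of \cref{main-thm}. Since distinct members of \(\mathcal S\) intersect trivially and are self-normalizing (\cref{normalizers}), each collapsed component lies in a single member of \(\mathcal S\). This yields a graph of groups with trivial edge groups whose vertex groups lie in conjugates of \(L\) or \(K\). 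Non-triviality follows because \(\widehat G\) itself is not contained in a conjugate of a factor, giving a non-trivial free product decomposition of \(G\).

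The main obstacle is the collapsing step. There one must guarantee that the components of non-parabolic edges produce virtually free groups, which needs both the Baumslag--Solitar/\(\bbz^2\) control and the applicability of \cref{wilton} to these pieces. The rest is a direct transcription of the earlier proof.
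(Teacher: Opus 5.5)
This is essentially the paper's proof. You transplant the argument of \cref{main-thm}, using two facts: Baumslag--Solitar subgroups of limit groups are \(\bbz^2\) (this follows from commutative transitivity; LERF alone does not exclude \(BS(n,\pm n)\)), and non-cyclic abelian vertex groups, hence all non-trivial edges incident to them, lie in a \(\widehat G\)-conjugate of \(K\) or \(L\) by \cite[Prop. 4.2.9]{ribesProfiniteGraphsGroups2017}.

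The one slip is that an abelian vertex can be joined to the non-parabolic subgraph by edges with trivial edge group, so it need not be isolated there. However, a trivial edge in a reduced splitting already gives a non-trivial free splitting of \(G\), so you may assume from the outset that all edge groups are infinite cyclic, as you implicitly do.
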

\begin{proof} Observe that limit groups are torsion-free, and any Baumslag--Solitar subgroup of a limit group must be isomorphic to \(\bbz^2\), so that the last conclusion of \cref{BSsubgroups} still applies. Moreover, \cref{claim-after-collapsing} remains correct as any incompressible edge incident to an abelian vertex must be parabolic by \cite[Prop. 4.2.9]{ribesProfiniteGraphsGroups2017} -- in fact, the abelian vertex group must itself be contained in the same \(\widehat{G}\)-conjugate of \(K\) or \(L\). The rest of the proof follows without changes. 
\end{proof}

%

\bibliographystyle{alpha}
\bibliography{biblio}

\end{document}